\newcommand*{\QEDA}{\hfill\hbox{\vrule width1.0ex height1.0ex}}
\newtheorem{thm}{Theorem}[section]
\newtheorem{theorem}[thm]{Theorem}
\newtheorem{lemma}[thm]{Lemma}
\newtheorem{proposition}[thm]{Proposition}
\newtheorem{corollary}[thm]{Corollary}
\newtheorem{definition}[thm]{Definition}
\newcommand{\beq}{\begin{equation}}
\newcommand{\eeq}{\end{equation}}
\newcommand{\beqa}{\begin{eqnarray}}
\newcommand{\eeqa}{\end{eqnarray}}
\newcommand{\beqas}{\begin{eqnarray*}}
\newcommand{\eeqas}{\end{eqnarray*}}
\newcommand{\bi}{\begin{itemize}}
\newcommand{\ei}{\end{itemize}}
\newcommand{\vgap}{\vspace{.1in}}
\newcommand{\nn}{\nonumber}
\newcommand{\R}{\mathbb{R}}
\newcommand{\lam}{{\lambda}}
\newcommand{\inner}[2]{\langle #1,#2\rangle}
\newcommand{\argmin}{\mathrm{argmin}\,}
\newcommand{\dom}{\mathrm{dom}\,}
\newcommand{\Argmin}{\mathrm{Argmin}\,}
\newcommand{\Argmax}{\mathrm{Argmax}\,}
\newcommand{\bConv}[1]{\overline{\mbox{\rm Conv}}\,(\R^{#1})}
\newcommand{\tx}{\tilde x}
\newcommand{\tz}{\tilde z}
\begin{document}
	\title{A proximal bundle variant
		with optimal
		iteration-complexity \\ for a large
		range of prox stepsizes}
	\date{March 26, 2020 \\
	1st revision: July 27, 2020 \\
	2nd revision: January 15, 2021\\
	3rd revision: June 7, 2021}
	\maketitle
	
	\begin{center}
		\textsc{Jiaming Liang \footnote{School of Industrial and Systems
				Engineering, Georgia Institute of
				Technology, Atlanta, GA, 30332-0205.
				(email: {\tt jiaming.liang@gatech.edu}).
				This author
				was partially supported by
				ONR Grant
				N00014-18-1-2077.
				This author was also partially supported by
				NSF grant CCF-1740776 through ARC-TRIAD Fellowship and IDEaS-TRIAD Research Scholarship from Georgia Tech.
			}
			and 
			Renato D.C. Monteiro} \footnote{School of Industrial and Systems
			Engineering, Georgia Institute of
			Technology, Atlanta, GA, 30332-0205.
			(email: {\tt monteiro@isye.gatech.edu}). This author
			was partially supported by ONR Grant
			N00014-18-1-2077.}
	\end{center}

	\begin{abstract}
		
		This paper presents a proximal bundle variant, namely, the relaxed proximal bundle (RPB) method, for solving convex nonsmooth composite optimization problems.
		Like other proximal bundle variants, RPB solves a sequence of prox bundle subproblems whose objective functions are regularized composite cutting-plane models.
		Moreover, RPB uses a novel condition to decide
		whether to perform a serious or null iteration
		which does not necessarily yield a function value decrease.
		Optimal 
		iteration-complexity bounds for RPB are established for a large range of prox stepsizes, both in the convex and strongly convex settings.
		To the best of our knowledge, this is the first time that a proximal
		bundle variant is shown to be optimal for a large range of prox stepsizes.
		Finally, iteration-complexity results for RPB to obtain iterates satisfying practical termination criteria, rather than near optimal solutions,
		are also derived.
		\\
		
		{\bf Key words.} nonsmooth composite optimization, iteration-complexity, proximal bundle method, optimal complexity bound
		\\
		
		{\bf AMS subject classifications.} 
		49M37, 65K05, 68Q25, 90C25, 90C30, 90C60

	\end{abstract}
	
	\section{Introduction}\label{sec:intro}
	
	The main goal of this paper is to present a proximal bundle variant,
	namely, the relaxed proximal bundle (RPB) method, whose 
	iteration-complexity is optimal (possibly up to a logarithmic term), for a large range of prox
	stepsizes, in the context of
	convex nonsmooth
	composite optimization (CNCO) 
	problems.
	
	RPB is presented in the context of the CNCO problem
	\begin{equation}\label{eq:ProbIntro}
		\phi^{*}:=\min \left\{\phi(x):=f(x)+h(x): x \in \R^n\right\}
	\end{equation}
	where:
	i) $ f, h: \R^{n} \rightarrow \R\cup \{ +\infty \} $ are proper closed convex functions
	such that
	$ \dom h \subseteq \dom f $;
	ii) $h$ is $M_h$-Lipschitz continuous and $ \mu $-convex on $\dom h$ for some $ M_h\in [0,\infty] $ and $ \mu\ge 0 $;
	and iii) a zeroth-order (resp.,
	first-order) oracle, which
	for each $x \in \dom h$ returns
	$(f(x),h(x))$ (resp.,
	$f'(x)\in \partial f(x)$ such that
	$\|f'(x)\| \le M_f$),
	is available.
	Like other proximal bundle variants,
	the $j$-th iteration of RPB
	considers the cutting-plane model
	\begin{equation}\label{def:fj}
		f_j(\cdot) = \max \left\lbrace  f(x)+\inner{f'(x)}{\cdot-x} : \, x \in C_j\right\rbrace 
	\end{equation}
	where
	$C_j$ is a suitable subset of the iterates $\{x_0,x_1,\ldots,x_{j-1}\}$ generated so far. RPB then
	solves the prox bundle subproblem
	\begin{equation}
		x_j :=\underset{u\in  \R^n}\argmin
		\left\lbrace \underline{\phi}_j^\lam(u) := f_j(u) + h(u) +\frac{1}{2\lam}\|u- x^c_{j-1} \|^2 \right\rbrace  \label{def:xj}
	\end{equation}
	for $x_j$ where $ \lam $ is the prox stepsize (which for simplicity is assumed constant
	throughout the execution of RPB) and
	$x^c_{j-1}$ is the prox-center.
	It is also assumed that a solver oracle that can exactly solve \eqref{def:xj} is available.
	Complexity bounds described in this paper are relative to the number of RPB iterations performed, each of which consisting of two zeroth-order oracle calls ($ f $ and $ h $), a subgradient call for $ f $, and the resolution of the prox bundle subproblem \eqref{def:xj}.

	Like many other proximal bundle methods, RPB performs two types of iterations, namely:
	i) serious ones during which the prox-centers are changed; and
	ii) null ones where the prox-centers are left unchanged.
	Moreover, RPB uses a novel condition
	to decide
	whether to perform a serious or null iteration which does not necessarily yield a function value decrease.
	A nice feature of our complexity analysis of
	RPB is that it considers a flexible
	bundle management policy (i.e., the way $C_j$ is updated) 
	which allows for some of cuts
	to be removed but not aggregated (i.e., combined as convex combination).

	{\bf Contributions.}
	This paper establishes an iteration-complexity bound for RPB with an arbitrary prox stepsize $\lam>0$ 
		to obtain a $\bar \varepsilon$-solution of \eqref{eq:ProbIntro} (i.e., a point $\bar x\in \dom h$ satisfying $\phi(\bar x)-\phi^*\le \bar \varepsilon$).
		As a consequence, letting
		 $d_0 $ denote the distance of the initial point $x_0$ to the set of optimal solutions of \eqref{eq:ProbIntro},
		 it is shown that the iteration-complexity of RPB
		 is similar to that of the constant stepsize composite subgradient (CS-CS) method
		 under either one of the following two cases:
		 \begin{itemize}
		     \item [1)] $ \lam \in [d_0/M_f, C d_0^2/\bar \varepsilon] $ and
		 $ \mu \in [0,C'M_f/d_0] $;
		     \item[2)] $ \lam \in [\bar \varepsilon/(C M_f^2), C d_0^2/\bar \varepsilon] $, $ M_h\le C' M_f $ and $ \mu=0 $,
		 \end{itemize}
		where $C,C'$ are positive universal constants.
		It is worth noting that:
		a) case 1 allows $\mu$ to be zero and $M_h$ to be arbitrary, but its $\lam$-range is smaller than
		the one in case 2; and
		b) case 2 covers all instances of
		\eqref{eq:ProbIntro} for which $h$  is the indicator function of a closed convex set.
		Using these results, it is then argued that
 		RPB has optimal iteration-complexity with respect to
		some important instance classes of~\eqref{eq:ProbIntro}.

	Iteration-complexity results are also established for RPB to obtain iterates satisfying practical termination criteria rather than a $\bar \varepsilon$-solution.
	Another interesting conclusion of our analysis 
	is that the CS-CS method can be viewed as a special instance of RPB 
	as long as its prox stepsize $\lam$ is sufficiently small.

	{\bf Related works.} 
	Some preliminary ideas towards the development of
	the proximal bundle method were first presented in \cite{lemarechal1975extension,wolfe1975method}
	and formal presentations of the method were given in
	\cite{lemarechal1978nonsmooth,mifflin1982modification}.
	Convergence analysis of the proximal bundle method for
	CNCO problems  has been broadly discussed
	in the literature and can be found for example
	in the textbooks \cite{ruszczynski2011nonlinear,urruty1996convex}.
	Different bundle management policies in the context of proximal bundle methods are discussed for example in
	\cite{du2017rate,frangioni2002generalized,kiwiel2000efficiency,de2014convex,ruszczynski2011nonlinear,van2017probabilistic}.
	
	%
	

	Previous iteration-complexity analysis of some
	proximal bundle variants can be found in \cite{astorino2013nonmonotone,du2017rate,kiwiel2000efficiency}.
	More specifically, papers \cite{astorino2013nonmonotone,kiwiel2000efficiency} 
	consider proximal bundle variants for the
	special case of the CNCO problem where
	$h$ is the indicator function
	of a nonempty closed convex set (and hence $\mu=0$).
	Paper \cite{du2017rate} analyzes the
	complexity of the proximal bundle method considered in \cite{kiwiel2000efficiency} under the condition that $h=0$ and $f$ is strongly convex. A detailed discussion
	of how the complexity
	bounds obtained in these papers compare to the ones
	obtained in this work is given in Subsection \ref{subsec:compare}
	and the conclusion is that
	the bounds in \cite{astorino2013nonmonotone,du2017rate,kiwiel2000efficiency} are generally much worse than the ones
	obtained in this work for most (in some cases, all)
	values of the prox stepsize $\lam$.

	Another method related, and developed subsequently, to
	the proximal bundle method is the  bundle-level method,
	which was first proposed in \cite{lemarechal1995new}
	and extended in many ways in \cite{ben2005non,kiwiel1995proximal,lan2015bundle}.
	These methods have been shown to have optimal
	iteration-complexity in the setting
	of the CNCO problem with $h$ being the indicator function
	of a compact convex set. Since their generated subproblems
	do not have a prox term, and hence do not use a prox stepsize,
	they are different
	from the ones studied in this paper.
	
	{\bf Organization of the paper.}
	Subsection~\ref{subsec:DefNot}  presents basic definitions and complexity theory notation used throughout the paper.
	Section~\ref{sec:main} formally
	describes the assumptions
	on the CNCO problem \eqref{eq:ProbIntro},
	reviews
	the CS-CS method
	and discusses its iteration-complexity.
	Subsections~\ref{subsec:method}-\ref{subsec:upper} present the RPB method and state the main results of the paper, namely, the general iteration-complexity for RPB and its implications in convex and strongly convex settings.
	Subsections~\ref{subsec:compare} 
	discusses, in the unconstrained CNCO context, results established for
	other proximal bundle variants
	in light of the ones obtained for RPB
	in this paper.
	Section~\ref{sec:null} establishes a bound on the number of null iterations between two consecutive serious iterations and 
	discusses the relationship between CS-CS and RPB.
	Section~\ref{sec:serious} provides the proof of the general iteration-complexity for RPB stated in
	Section~\ref{sec:RPB}.
	Section~\ref{sec:other} describes two alternative notions of approximate solutions for \eqref{eq:ProbIntro} and presents iteration-complexity results
	with respect to them.
	Section~\ref{sec:optimal} reviews basic concepts from complexity theory, presents the lower complexity bound,
	and shows both CS-CS and RPB are optimal with respect to some instance classes of \eqref{eq:ProbIntro} introduced in this section.
	Section~\ref{sec:conclusion} presents some concluding remarks and possible extensions.
	Finally, Appendix~\ref{sec:CS-CS pf} provides the proof of 
	the iteration-complexity for the CS-CS
	method, Appendix~\ref{sec:lb} gives the proof the lower complexity bound, and Appendix~\ref{sec:pf-opt} provides the proof of optimal complexity of the RPB method.

	\subsection{Basic definitions and notation} \label{subsec:DefNot}
	
	
	The set of real numbers is denoted by $\mathbb{R}$. 
	The set of non-negative real numbers and the set of positive real numbers are denoted by $ \R_+ $ and $ \R_{++} $, respectively.
	Let $\R^n$ denote the standard $n$-dimensional Euclidean 
	space equipped with  inner product and norm denoted by $\left\langle \cdot,\cdot\right\rangle $
	and $\|\cdot\|$, respectively. 
	Given a set $ S\subset \R^n $, its linear (resp., convex) hull is
	denoted by $\text{Lin} \, S $ (resp., ${\rm conv} \, S$).
	Let $\log(\cdot)$ denote the natural logarithm and define 
	$ \log_1^+(\cdot):= \max \{\log(\cdot),1\} $.

	Let $\psi: \R^n\rightarrow (-\infty,+\infty]$ be given. The effective domain of $\psi$ is denoted by
	$\dom \psi:=\{x \in \R^n: \psi (x) <\infty\}$ and $\psi$ is proper if $\dom \psi \ne \emptyset$.
	Moreover, a proper function $\psi: \R^n\rightarrow (-\infty,+\infty]$ is $\mu$-convex for some $\mu \ge 0$ if
	$$
	\psi(\alpha z+(1-\alpha) u)\leq \alpha \psi(z)+(1-\alpha)\psi(u) - \frac{\alpha(1-\alpha) \mu}{2}\|z-u\|^2
	$$
	for every $z, u \in \dom \psi$ and $\alpha \in [0,1]$.
	The set of all proper lower semicontinuous convex functions $\psi:\R^n\rightarrow (-\infty,+\infty]$  is denoted by $\bConv{n}$.
	For $\varepsilon \ge 0$, the \emph{$\varepsilon$-subdifferential} of $ \psi $ at $z \in \dom \psi$ is denoted by
	$\partial_\varepsilon \psi (z):=\left\{ s \in\R^n: \psi(u)\geq \psi(z)+\left\langle s,u-z\right\rangle -\varepsilon, \forall u\in\R^n\right\}$.
	The subdifferential of $\psi$ at $z \in \dom \psi$, denoted by $\partial \psi (z)$, is by definition the set  $\partial_0 \psi(z)$.
	
	Let constant $\bar c \in (1,\infty)$ and functions $ p, q: {\cal Y} \rightarrow \R_{+} $ defined in an arbitrary set ${\cal Y}$
	be given.
	We write $p(\cdot) = {\cal O}(q(\cdot))$ (with underlying constant  $\bar c$)
	if $ p(y)\le \bar c q(y) $ for every $y \in {\cal Y}$.
	Finally, we write $ p(\cdot)={\cal O}_1(q(\cdot)) $ if $ p(\cdot)={\cal O}(q(\cdot) + 1) $.
	It is worth emphasizing that the above ${\cal O}(\cdot)$ concept depends
	on the pre-specified constant
	$\bar c$.
	Clearly, it follows from the above definition that
	if $p_i(y) = {\cal O}(q_i(y))$
	with underlying constant $\bar c_i \in (1,\infty)$ for $i=1,2$, then
	$p_1(y)p_2(y) = {\cal O}(q_1(y)q_2(y))$ with
	underlying constant
	$\bar c_1 \bar c_2$.

	\section{Assumptions and the CS-CS method}\label{sec:main}
	This section contains two subsections. The first one formally describes
	the assumptions made on the CNCO problem \eqref{eq:ProbIntro}.
	The second one presents the CS-CS method and the iteration-complexity of it for solving
	\eqref{eq:ProbIntro}.

	\subsection{Assumptions}\label{subsec:assumption}
	
	For some triple
	$(M_f,M_h,\mu) \in \R_+ \times [0,\infty] \times \R_+ $,
	the following conditions on \eqref{eq:ProbIntro} are assumed to hold:
	\begin{itemize}
		\item[(A1)]
		functions $f, h \in \bConv{n}$ are such that
		$\dom h \subset \dom f$ and function $f':\dom h \to \R^n$
		is such that $f'(x) \in \partial f(x)$ for all $x \in \dom h$;
		\item[(A2)]
		the set of optimal solutions $X^*$ of
		problem \eqref{eq:ProbIntro} is nonempty;
		\item[(A3)]
		$h$ is $\mu$-convex and $\|f'(x)\| \le M_f$ for all $x \in \dom h$;
		\item[(A4)]
		$h$ is $M_h$-Lipschitz continuous on $\dom h$, i.e.,
		\[
		|h(u)-h(v)|\le M_h\|u-v\| \quad \forall u,v \in \dom h.
		\]
	\end{itemize}

	As already mentioned in Section~\ref{sec:intro},
	in addition to the above assumptions, it is assumed that
	a zeroth-order  oracle, which
	for each $x \in \dom h$ returns
	$(f(x),h(x))$,
	and a solver oracle that can exactly solve \eqref{def:xj}, are also available.
	Complexity bounds developed throughout this paper are in terms of RPB iterations. Since each iteration involves two
	zeroth-order oracle calls, one first-order oracle call, and one solver oracle call, they are
	also complexity bounds for the
	number of
	oracle calls.

	
	We now make some remarks about assumptions (A1)-(A4). 
	First, function $ f'(\cdot) $ should be viewed as an oracle which, for given
	$u \in \dom h$, returns a subgradient
	of $f$ at $u$ whose magnitude is bounded by $M_f$.
	Second, it follows as a consequence of (A3) that
	\begin{equation}\label{ineq:func}
		|f(u)-f(v)|\le M_f\|u-v\| \quad \forall u,v \in \dom h.
	\end{equation}
	Third, if $ \mu>0 $ and $ \dom h $ is unbounded, then $M_h$ can not be finite.
	Fourth, if $u \in \dom h$,
	(A4) does not imply that $\partial h(u)$ is bounded, 
	even when $ M_h$ is finite.
	For example, an indicator function of a closed convex set
	satisfies (A4) but its subdifferential at a point
	in its relative boundary is unbounded.
	
	For a given tolerance $\bar \varepsilon>0$, a point $ x $ is called a
	$ \bar \varepsilon $-solution of \eqref{eq:ProbIntro} if
	\begin{equation}\label{def:suboptimal}
		\phi(x) - \phi^*\le \bar \varepsilon
	\end{equation}
	where $\phi^*$ is as in \eqref{eq:ProbIntro}.
	Note that, while \eqref{def:suboptimal} is theoretically appealing from a complexity point of view, it can rarely be used as
	a stopping criterion since $\phi^*$ is generally
	not known. Other more practical
	stopping criteria are discussed
	in Section \ref{sec:other}.

	\subsection{Review of the CS-CS method}\label{subsec:cs}
	
	
	
	The CS-CS method with initial point $x _0 \in \dom h$ and constant prox stepsize $\lambda$, denoted by CS-CS$(x_0,\lam)$,
	recursively computes its iterates
	according to
	\begin{equation}\label{eq:sub}
		x_j =\underset{u\in  \R^n}\argmin
		\left\lbrace  f(x_{j-1})+\inner{f'(x_{j-1})}{u-x_{j-1}} + h(u) +\frac{1}{2\lam} \|u- x_{j-1} \|^2 \right\rbrace.
	\end{equation}

	Let $(x_0,\lam) \in \dom h \times \R_{++}$ and $(M_f, \mu, d_0) \in \R_+ \times \R_+ \times \R_{++} $ be given.
	Consider an instance $(x_0,(f,f';h))$ satisfying conditions (A1)-(A3) and
	\begin{equation}\label{def:d0}
		d_0 =\inf \{\|x_0-x^*\|: x^*\in X^*\}=\|x_0-x_0^*\|
	\end{equation}
	where $ x_0^* $ is the closest point $x^* \in X^*$ with respect to $x_0$.
	We let the $\bar \varepsilon$-iteration complexity
	bound for CS-CS denote
	the bound on the total number of iterations performed by CS-CS until a $\bar \varepsilon$-solution is obtained.
	For any given universal constant $C >1$, it follows from
	Proposition \ref{prop:sub-new} that 
	CS-CS$(x_0,\lam)$ with any stepsize $\lam$ such that
	$\bar \varepsilon/( C M_f^2) \le 4\lam \le \bar \varepsilon/ M_f^2$
	has $\bar \varepsilon$-iteration complexity bound given by
	\begin{equation}\label{eq:bound}
		{\cal O}_1\left(\min \left\lbrace \frac{M_f^2 d_0^2}{\bar \varepsilon^2}, \left( \frac{M_f^2}{\mu \bar \varepsilon} +1 \right) \log\left( \frac{\mu d_0^2}{ \bar \varepsilon} + 1\right) \right\rbrace  \right)
	\end{equation}
	with the convention that
	the second term is equal to the first one when $\mu=0$.
	(It is worth noting that the second term 
	converges to the first one as
	$\mu \downarrow 0$.)

	We now make some remarks about bound \eqref{eq:bound}.
	First,
	for the case in which $\mu=0$ and under the extra assumption that $x_0\in \Argmin\{h(x): x\in \R^n \}$, 
	it is well-known that
	the $ \bar \varepsilon $-iteration complexity bound for
	CS-CS$(x_0,\lam)$ with $\bar \varepsilon/(C M_f^2) \le \lam \le \bar \varepsilon/M_f^2$ for a universal constant $C >1$
	is as in \eqref{eq:bound} with $ \mu=0 $
	(e.g., see Theorem 9.26 of \cite{beck2017first}).
	Hence, the result described in the previous paragraph
	generalizes the one in the previous sentence in that it removes the extra assumption above
	but requires changing the range on $\lam$ to
	$\bar \varepsilon/(C M_f^2) \le 4\lam \le  \bar \varepsilon/M_f^2$ for a universal constant $C >1$.
	Second, it follows as a special case of the analysis in
	Chapter 3.2.3 of \cite{nesterov2018lectures} that
	a certain variable stepsize projected subgradient method
	for the case in which $\mu=0$ has
	$\bar \varepsilon$-iteration complexity bound
	for instances $(x_0,(f,f';h)) $ satisfying (A1)-(A3) and such that $h$ is the indicator function of a
	closed convex set.
	In this regard, the result in the previous paragraph
	with $\mu=0$ extends the result just mentioned
	to all instances satisfying (A1)-(A3), but replaces
	the variable stepsize projected subgradient method
	with CS-CS$(x_0,\lam)$ with $\bar \varepsilon/(C M_f^2)\le 4\lam \le \bar \varepsilon/M_f^2$ for a universal constant $C >1$.

	\section{The RPB method and main results}\label{sec:RPB}
	
	This section contains three subsections. 
	The first one describes the RPB method
	and discusses
	serious/null decision policies,
	storage requirement of RPB,
	and bundle management policies.
	The second one states a general $\bar \varepsilon$-iteration complexity bound for RPB, and two consequences of the general bound in the convex and strongly convex settings.
	The third one derives  $\bar \varepsilon$-iteration complexity bounds for another proximal bundle variant
	with respect to unconstrained CNCO instances
	and compares them
	with the ones obtained for RPB in
	Subsection \ref{subsec:upper}.

	\subsection{The RPB method}\label{subsec:method}

	We start by formally stating
	the RPB method.
	Its description below uses
	the cutting-plane model $f_j$ defined in \eqref{def:fj} and the availability
	of the subgradient
	oracle function $f'(\cdot)$ as in (A3).
	Note that the model $f_j$ is used in the construction of subproblem \eqref{def:xj} and
	is defined in terms of a finite  set
	$C_j \subset \{x_0,x_1,\ldots,x_{j-1}\}$
	which is updated
	according to step 2
	below. Moreover,
	RPB is stated without a
	specific termination criterion with the intent of
	making it as flexible as possible. 
	Subsection~\ref{subsec:upper} (resp., Section~\ref{sec:other}) then describes iteration-complexity bounds for it to obtain a $\bar \varepsilon$-solution (resp., other types of approximate solutions).
	
	
	
	\noindent\rule[0.5ex]{1\columnwidth}{1pt}
	
	RPB
	
	\noindent\rule[0.5ex]{1\columnwidth}{1pt}
	\begin{itemize}
		\item [0.] Let $ x_0\in \dom h $, $\lam>0$ and $ \delta>0 $
		be given, invoke the oracle $f'(\cdot)$ to obtain $ f'(x_0)\in \partial f(x_0) $, and set $ x^c_0 = x_0 $, $\tx_0=x_0$, $\hat z_0=x_0$, $ C_1=\{x_0\} $, $ j=1$ and $k=1$;
		\item [1.]
		Compute $x_j$ according to \eqref{def:xj}, the function values
		$f(x_j)$, $h(x_j)$ and
		$f_j(x_j)$, and the optimal value
		$m_j:=\underline{\phi}_j^\lam(x_j)$ of subproblem
		\eqref{def:xj}, and 
		invoke the oracle $f'(\cdot)$ to obtain $ f'(x_j)\in \partial f(x_j) $.
		Moreover, consider the function
		$\phi_j^\lam$ defined as
		\begin{equation}\label{def:philam}
			\phi_j^\lam:=\phi +
			\frac{1}{2\lam} \|\cdot-x^c_{j-1}\|^2
		\end{equation}
		and
		let $\tx_j$ be such that
		\begin{equation}\label{def:txj}
			\tx_j\in \Argmin\left\lbrace \phi_j^\lam(u): u\in \{x_j, \tx_{j-1}\}\right\rbrace;
		\end{equation}
		\item[2.]  {\bf If }
		\begin{equation}\label{ineq:hpe1}
			t_j := \phi^\lam_j(\tx_j) - m_j \le \delta,
		\end{equation}
		\begin{itemize}
			\item [2.a)] {\bf then} perform a serious iteration, i.e.,
			choose an arbitrary finite set $ C_{j+1} $
			such that $ \{x_j\} \subset C_{j+1} $,
			and set $ x^c_j = x_j $ and
			\begin{equation}\label{def:hat zk}
				\hat z_k \in \Argmin \left\lbrace  \phi(u) : u \in \{ \hat z_{k-1},\tx_j\} \right\rbrace ;\\
			\end{equation}
			if $\hat z_k$ satisfies the termination criterion, then {\bf stop} and return $\hat z_k$; else, set $ k \ensuremath{\leftarrow} k+1 $, and go to step 3;
			\item[2.b)] 
			{\bf else} perform a null iteration, i.e., set $ x^c_j= x^c_{j-1} $, and choose $ C_{j+1} $ such that
			\begin{equation}\label{def:Cj}
				A_{j} \cup \{x_j\} \subset C_{j+1} \subset C_{j}\cup \{x_j\}
			\end{equation}
			where
			\begin{equation}\label{def:A_j}
				A_{j}:=\left\lbrace x\in C_{j}: f(x)+\inner{f'(x)}{x_j-x} =f_j(x_j) \right\rbrace
			\end{equation}
			and $f_j$ is defined in \eqref{def:fj};
			go to step 3;
		\end{itemize}
		\item[3.] 
		Set $ j $ \ensuremath{\leftarrow} $ j+1 $ and go to step 1.
	\end{itemize}
	\rule[0.5ex]{1\columnwidth}{1pt}
	
	We sometimes refer to RPB as RPB$(x_0,\lam,\delta)$
	whenever it is necessary to make its input
	$(x_0,\lam,\delta)$ explicit.
	An iteration index $j$ for which \eqref{ineq:hpe1} is satisfied is called
	a serious one in which
	case $x_j$ (resp., $\tx_j$) is called a serious iterate
	(resp., auxiliary serious iterate);
	otherwise, $j$ is called a null iteration index.
	Moreover, we assume throughout our presentation that $j=0$ is also a serious iteration index.
	
	We now make some basic observations about RPB.
	First, the index $j$ denotes
	the total iteration count
	and $k=k(j)$ equals
	the number of serious
	iteration indices (including $0$) less than $j$.
	Second, for any $j \ge 1$,
	if $\ell_0$ denotes the largest serious iteration index less than or equal to $ j$, then
	$\tx_j$ is the best point (in terms of $\phi_j^\lam$)
	among the set $\{\tx_{\ell_0}, x_{\ell_0+1}, \ldots, x_j\}$.
	Third, the iterate
	$\hat z_k$ can be easily seen to be the best auxiliary serious iterate $\tx_j$ (in terms of $\phi$) found
	up to and including the
	$k$-th serious iteration.
	Fourth,
	the complexity results established in Theorems \ref{thm:suboptimal} and \ref{thm:approximate}, and Corollary \ref{cor:pair} below, are with respect to
	$\hat z_k$.
	This is in contrast to the iteration-complexity analysis of \cite{du2017rate,kiwiel2000efficiency}, which establish complexity bounds with respect to the best (in terms of $ \phi $) serious iterate $ x_j $ (instead of
	$\tx_j$ as above) found so far.
	Fifth, the bundle set $C_j$ consists of the set of points that are
	used to construct the cutting-plane model $f_j$ which
	minorizes $f$. 
	Sixth,
	$A_j$ consists of the subset of points from $C_j$
	which are active at the most recent point $x_j$,
	i.e., the set of points  which attains the maximum
	in \eqref{def:fj}.

	We now provide some insights
	on
	how RPB can be viewed as an inexact proximal point method
	(see for example \cite{fuentes2012descentwise,guler1992new,lin2017catalyst,Rock:ppa,MR1783979}) for solving \eqref{eq:ProbIntro}.
	First, recall that each RPB iteration performs either a serious iteration (step 2.a)
	or a null
	one (step 2.b).
	Letting $(z_{k-1},\tz_{k-1})$ denote the $(k-1)$-th serious pair generated after $x_0$, it follows that the sequence
	of consecutive null pairs $\{(x_j,\tx_j): j \in J_k\}$ obtained immediately after $z_{k-1}$ together with
	the next serious pair $(z_k,\tz_k)$ can be
	viewed as an iterative procedure
	to compute a $\delta$-solution of the proximal subproblem  $\min\{\phi(u)+\|u-z_{k-1}\|^2/(2\lam):u\in \R^n\}$.
	Indeed, first note that
	this subproblem is equivalent to the problem
	$\min \{ \phi_j^\lam(u):u\in\R^n \}$,
	where $\phi_j^\lam$ is as in \eqref{def:philam},  due to the fact that $x_{j-1}^c=z_{k-1}$ for every index $j \in J_k$.
	Second, using the definition of $t_j$ in \eqref{ineq:hpe1} and the fact that $m_j\le m_j^* \le \phi_j^\lam(\tx_j)$ 
	where $m_j^*:= \min \{\phi_j^\lam(u): u\in \R^n\}$ (see Lemma \ref{lem:101}),
	we conclude that
	$\phi_j^\lam(\tx_j)-m_j^* \le t_j$. This observation together with the role played by \eqref{ineq:hpe1} in step 2 implies that
	$\tz_k$
	is a $\delta$-solution of the above proximal subproblem.
	Third, once such an approximate
	solution pair $(z_k,\tz_k)$
	is obtained,
	the prox-center $z_{k-1}$ of the above proximal subproblem
	is updated to $z_k$ (see step 2.a) and this
	essentially corresponds to performing an inexact proximal step to  problem \eqref{eq:ProbIntro}.
	Subsection~\ref{sec:serious} and Section~\ref{subsec:other} develop complexity bounds
	on the total number of
	proximal steps that can be performed as above,
	and hence on the number of 
	serious iterations performed
	by RPB, until a pre-specified
	termination criterion is satisfied.

	We now discuss
	some serious/null decision policies that were
	used in
	other proximal bundle methods.
	First,
	the ones in references
	\cite{bertsekas2015convex,du2017rate,kiwiel2000efficiency,de2014convex,ruszczynski2011nonlinear,urruty1996convex,van2014constrained} all rely on the unified condition 
	\begin{equation}\label{ineq:descent}
		\phi(x_{j-1}^c) - \phi(x_j) \ge \frac{\gamma}{1-\gamma}
		\left[ f(x_j)-f_j(x_j) - \frac{\alpha_j}{2\lam}\|x_{j}-x_{j-1}^{c}\|^2\right]
	\end{equation}
	where $\alpha_j\in[0,2]$ and $\gamma \in (0,1)$.
	Under the assumption that $\alpha_j=0$, the above condition together with the
	fact that $f \ge f_j$ (see Lemma \ref{lem:101}) implies that  $\phi(x_{j-1}^c) \ge \phi(x_j)$,
	and hence that $\phi(x_{j-1}^c) \ge \phi(x_j^c)$ in view of
	the way $x_j^c$ is defined in step 2 of RPB.
	In view of the latter inequality, condition \eqref{ineq:descent} with $\alpha_j =0$ is referred to as the descent condition,
	and proximal bundle variants based on
	it have been studied in \cite{du2017rate,kiwiel2000efficiency,ruszczynski2011nonlinear}.
	Moreover, the one with $\alpha_j \in (0,2]$ can viewed as a relaxation
	of the descent condition which does not necessarily imply
	monotonicity  of $\{\phi(x_j^c)\}$ but guarantees the pointwise convergence of $\{x_j^c\}$ and $\{x_j\}$.
	Proximal bundle variants based on this relaxed condition
	have been studied in \cite{bertsekas2015convex,urruty1996convex}
	for $\alpha_j=1$ and
	in \cite{de2014convex,van2014constrained} for 
	the more general case where $\alpha_j \in [0,2]$.
	Second, paper \cite{van2018incremental} proposes a proximal bundle variant where the serious/null decision policies are not necessarily mutually exclusive.
	Third, as opposed to RPB and the algorithms in the above references
	which rely on serious/null decision policies,
	paper \cite{astorino2013nonmonotone} proposes a proximal bundle variant
	which allows the next prox-center $x_j^c$ to be a specific point
	in the line segment $[x_{j-1}^c,x_j]$.
	Finally, the selection rule \eqref{ineq:hpe1} involving $\tx_j$
	differs from the other aforementioned selection rules
	for $x_j^c$ since they
	do not rely on $\tx_j$
	(which generally differs from $x_j$ and does not necessarily lie in $[x_{j-1}^c,x_j]$).
	
	We now add a few remarks about the RPB storage requirement.
	First, at the beginning of each iteration of
	RPB, it is assumed that the following information
	are available:
	1) the data $ \{ (x, f(x),f'(x)): x\in C_j \} $ of the model \eqref{def:fj} in order to solve \eqref{def:xj} for $x_j$
	in step 1; and
	2) the triple
	$(x_{j-1}^c, \tx_{j-1}, \hat z_{k-1})$
	where $k=k(j)$ (see the first remark in the second paragraph following RPB for the definition of $k(j)$).
	Second, $ \tx_j $ is updated in every iteration of RPB according to \eqref{def:txj}.
	Third, $ x_j^c $ and $ \hat z_k $ only change during a serious iteration, and are updated as described in step 2.a.
	Hence, the size of the storage requirement of RPB is directly proportional to the cardinality of the bundle set $C_j$.
	
	We end this subsection by briefly discussing bundle management policies
	in the context
	of proximal bundle methods
	have been investigated
	in many works (see for example
	in \cite{du2017rate,frangioni2002generalized,kiwiel2000efficiency,de2014convex,ruszczynski2011nonlinear,van2017probabilistic}).
	In the context of RPB,
	this means 
	the way the bundle set $ C_j $ is updated in both steps 2.a and 2.b.
	The following two paragraphs specifically comment on these updates.
	
	Consider first the
	(flexible)
	rule imposed on the next bundle set $C_{j+1}$ relative to the
	current bundle set $C_j$ in the execution of step 2.b
	when a null iteration happens.
	First, this rule
	has already been considered in \cite{du2017rate,frangioni2002generalized,kiwiel2000efficiency,ruszczynski2011nonlinear}. Second,
	it can be seen from \eqref{def:Cj} that $ x_j \in C_{j+1}$ holds for every $ j\ge 0 $, and it is shown in Lemma \ref{lem:simple}(d) below that $ x_j\notin C_j $ for every null iteration index $ j $. This
	remark together with \eqref{def:Cj} then imply that, in
	every null iteration,
	one new point $x_j\notin C_j $ is added to $ C_{j+1}$, while some of the
	points in $C_j\setminus A_j$ are possibly removed from it.
	
	Consider next the
	rule imposed on the next bundle set $C_{j+1}$  in the execution of
	step 2.a
	when a serious iteration happens.
	First, this rule has already been considered in \cite{de2014convex}.
	Second, this rule, which
	requires $C_{j+1}$ to satisfy $C_{j+1} \supset \{x_j\}$, allows for the possibility of completely refreshing the bundle set by setting it to
	$C_{j+1} = \{x_j\}$.
	Third, if $ C_{j+1} $ is chosen as
	$ \{x_j\} $ at every serious iteration, then it follows from Theorem \ref{thm:suboptimal}(b) below that the size of any bundle set $C_j$ is always bounded by \eqref{cmplx:null}.
	Finally, since the prox bundle subproblem \eqref{def:xj} generally becomes harder to solve as the size of the bundle set $C_j$ grows,
	it might be convenient to choose $C_{j+1}$ as lean as possible, i.e., 
	$C_{j+1}=\{x_j\}$ if $j$ is a serious iteration index and $ C_{j+1}=A_j\cup \{x_j\} $ if $j$ is a null iteration index.

	\subsection{A general $\bar \varepsilon$-iteration complexity bound for RPB}\label{subsec:upper}
	
	
	The following result, whose proof will be given in Subsection \ref{sec:serious},
	presents among other facts, a $\bar \varepsilon$-iteration complexity
	bound for RPB, which is
	a bound on the total number of
	(both serious and null) iterations performed by RPB
	until a $\bar \varepsilon$-solution is
	obtained.
	The iterate used to obtain such a solution
	is $\hat z_k$ which, as already mentioned
	in the second paragrpah following RPB,
	is
	the best (in terms of $\phi$)
	auxiliary serious iterate $\tx_j$ generated up to and including the
	$k$-th serious iteration.
	The use of this iterate as a candidate to obtain a $\bar \varepsilon$-solution plays a fundamental role in the complexity analysis of RPB and
	clearly differs from
	the iteration-complexity analysis of 
	\cite{du2017rate,kiwiel2000efficiency}
	which are based on the best (in terms of $\phi$) serious iterate $x_j$
	(instead of $\tilde x_j$ as above)
	generated up to and including the
	$k$-th serious iteration.

	\begin{theorem}\label{thm:suboptimal}
		Assume that $(f,f';h)$ satisfies (A1)-(A4) for some
		$(M_f,M_h,\mu) \in \R_+ \times [0,\infty] \times \R_+$. 
		Then, for any given
		$(x_0,\lam,\bar \varepsilon) \in \dom h \times \R_{++} \times \R_{++}$,
		the following statements about RPB$(x_0,\lam,\delta)$ with
		$\delta=\bar \varepsilon/2$ hold:
		\begin{itemize}
			\item[a)] the number of serious iterations performed
			until it obtains a best auxiliary serious iterate $\hat z_k$ such that $ \phi(\hat z_k)-\phi^*\le \bar \varepsilon $ is bounded by
				\[
				\min \left\lbrace \frac{d_0^2}{\lam \bar \varepsilon}, \frac{1}{\tilde\lam \mu} \log\left(  \frac{\mu d_0^2}{\bar \varepsilon} + 1\right) \right\rbrace + 1
				\]
			where 
			\begin{equation}\label{def:tlam}
				\tilde \lam:=\frac{\lam}{1+\lam \mu};
			\end{equation}
			\item[b)]
			if $\ell_0$ denotes an
			arbitrary serious  iteration index and
			all
			the auxiliary serious iterates $\hat z_k$ generated up to and including
			the $\ell_0$-th iteration satisfy $\phi(\hat z_k)-\phi^* > \bar \varepsilon$,
			then the next serious iteration index $\ell_1 > \ell_0$ occurs  and satisfies
			\begin{equation}\label{cmplx:null}
				\ell_1 - \ell_0\le \frac{\min  \left\lbrace 2 (16)^{4/3} \lam M M_f, 2 (16)^{4/3} \tilde \lam M_f^2 + 40\sqrt{2} M_f d_0\right\rbrace}{\bar \varepsilon} +1;
			\end{equation}
			\item[c)]
			the total number of iterations performed until it
			obtains
			an auxiliary serious iterate $\hat z_k$
			such that $ \phi(\hat z_k)-\phi^*\le \bar \varepsilon $
			is bounded by
				\begin{equation}\label{cmplx:total-strong}
					\left( \frac{ 2 (16)^{4/3} M_f \min\{\lam M, \tilde \lam M_f+ d_0\}}{\bar \varepsilon} + 1\right) \left[ \min \left\lbrace \frac{d_0^2}{\lam \bar \varepsilon}, \frac{1}{\tilde\lam\mu} \log\left(  \frac{\mu d_0^2}{\bar \varepsilon} +1\right) \right\rbrace  + 1\right] 
				\end{equation}
		\end{itemize}
		where $\phi^*$ and $d_0$ are as in \eqref{eq:ProbIntro} and \eqref{def:d0}, respectively, and $M=M_f+M_h$.
	\end{theorem}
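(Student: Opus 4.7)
The plan is to prove parts (a), (b), (c) in that order, with (c) following by combining the previous two bounds.

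For (a), I would exploit the inexact proximal point interpretation of the serious iterations outlined in Subsection~\ref{subsec:method}. Enumerate the serious indices $0 = j_0 < j_1 < \cdots$ and set $z_i = x^c_{j_i}$ and $\tz_i = \tx_{j_i}$. Because the serious condition \eqref{ineq:hpe1} holds at $j_i$ and because $m_{j_i} \le m_{j_i}^* \le \phi^\lam_{j_i}(\tz_i)$ (a fact that should follow from a lemma such as Lemma~\ref{lem:101} on the cutting-plane inequality $f_j \le f$), the pair $(z_{i-1}, \tz_i)$ acts as an inexact proximal step with optimality gap at most $\delta = \bar\varepsilon/2$ for the subproblem $\min_u \{\phi(u) + \|u - z_{i-1}\|^2/(2\lam)\}$. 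From here the standard telescoping estimate for the inexact proximal point method gives $\phi(\tz_k) - \phi^* \le d_0^2/(2\lam k) + \bar\varepsilon/2$ in the purely convex case; when $\mu > 0$, a proximal contraction of the form $\|z_i - x^*\|^2 \le (1 + \lam\mu)^{-1} \|z_{i-1} - x^*\|^2 + O(\tilde\lam \bar\varepsilon)$ yields linear convergence at rate $1 - \tilde\lam\mu$. Since $\hat z_k$ is at least as good in $\phi$-value as $\tz_k$ by \eqref{def:hat zk}, imposing $\phi(\hat z_k) - \phi^* \le \bar\varepsilon$ produces the $\min$ expression in (a).

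For (b), fix a serious index $\ell_0$ and consider the batch $\ell_0 + 1, \ldots, \ell_1$ during which all but the terminal index are null. Throughout this batch the prox-center is frozen at $z = x^c_{\ell_0}$, so every subproblem \eqref{def:xj} minimizes the same proximal quadratic against a progressively refined cutting-plane model $f_j$; the task is to show that the gap $t_j = \phi^\lam_j(\tx_j) - m_j$ is driven below $\delta = \bar\varepsilon/2$ within the count on the right-hand side of \eqref{cmplx:null}. The minimum in \eqref{cmplx:null} reflects two complementary analyses to be run in parallel: a Lipschitz-bundle analysis using $M = M_f + M_h$ together with the optimality of $x_j$ in \eqref{def:xj} (which delivers the $\lam M M_f/\bar\varepsilon$ term), and a distance-based analysis which avoids $M_h$ altogether by exploiting the proximal regularization and the fact that iterates remain within a ball of radius $O(\tilde\lam M_f + d_0)$ around $X^*$ (which delivers the $\tilde\lam M_f^2 + M_f d_0$ term). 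Both arguments should lean on Lemma~\ref{lem:simple}(d) ($x_j \notin C_j$ at null $j$), so that each null iteration enlarges the bundle in a direction that strictly tightens $f_{j+1}$ at $x_j$, yielding a quantitative per-iteration decrease of $t_j$.

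The main obstacle is the distance-based estimate in (b): producing a bound independent of $M_h$ will require combining the optimality condition for $x_j$ in \eqref{def:xj} (a subgradient of $f_j + h$ at $x_j$ equals $-(x_j - z)/\lam$) with a quadratic-growth lower bound on $\phi^\lam_j$ about its minimizer, and then aggregating the resulting per-iteration decrements over null indices. Once (a) and (b) are established, part (c) follows by multiplying the serious-iteration bound in (a) by the per-batch null-iteration bound plus one in (b), absorbing the trailing $+1$ offsets into the universal constants.
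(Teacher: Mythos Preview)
Your plan for (a) and (c) matches the paper's. For (a), the paper derives the single recursion
\[
\phi(\tz_k) - \phi^* \le \frac{1}{2\lam}\|z_{k-1}-x_0^*\|^2 - \frac{1+\lam\mu}{2\lam}\|z_k-x_0^*\|^2 + \delta
\]
from Lemma~\ref{lem:iterate}(c),(d), then feeds it into the elementary Lemma~\ref{lm:easyrecur}; the convex and strongly convex bounds fall out simultaneously rather than via separate analyses. A byproduct of this recursion (Lemma~\ref{lm:easyrecur}(b)) is $\|z_k-x_0^*\|\le \sqrt{2}\,d_0$ whenever $\phi(\hat z_k)-\phi^*>\bar\varepsilon$, i.e.\ $d_{\ell_0}\le\sqrt{2}\,d_0$. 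You will need this fact for (b) to convert the local quantity $d_{\ell_0}$ into the global $d_0$; your proposal does not isolate it.

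The real gap is your mechanism for (b). There is no ``quantitative per-iteration decrease of $t_j$'' in the sense you suggest: the paper only shows $\{t_j\}$ is nonincreasing (Lemma~\ref{lem:mj}(b)), and the $O(1/(j-\ell_0))$ rate comes from a bootstrapped induction (Proposition~\ref{prop:rate}), not from summing per-step decrements. The key estimates are $t_j\le 2M_f\|x_j-x_{j-1}\|$ (Lemma~\ref{lem:simple}(c)) and $t_i\ge \frac{1}{2\tilde\lam}\sum_{l=i+1}^{j}\|x_l-x_{l-1}\|^2$ (Lemma~\ref{lem:mj}(a)). Neither inequality alone gives a rate; the paper introduces $\Delta_j=\min_{i\le j}\|x_i-x_{i-1}\|$, combines these estimates over a half-window via Cauchy--Schwarz to obtain $\Delta_j^2\le C\tilde\lam M_f\sqrt{\tilde\lam\, t_{\text{mid}}}/(j-\ell_0)^2$ (Lemma~\ref{lem:tj}(b)), and then closes the loop by induction: assuming $t_i\le a/(i-\ell_0)$ for $i<j$ gives $\Delta_j\le C'(\tilde\lam M_f)^{3/4}a^{1/4}/(j-\ell_0)$, and the choice of $a$ is exactly what makes $2M_f\Delta_j\le a/(j-\ell_0)$ self-consistent. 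The two terms in the $\min$ arise from the two bounds on $\|x_{\ell_0+1}-x_{\ell_0}\|$ in Lemma~\ref{lem:ell0}(c), which seed the base case via Lemma~\ref{lem:mj}(c). Your description of ``aggregating per-iteration decrements'' does not capture this structure and would not, as stated, deliver the rate.
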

	
	
	
	We now make some comments about Theorem \ref{thm:suboptimal}.
	First, the behavior of RPB clearly depends on the choice of the
	prox stepsize $\lam$ in its step 0.
	More specifically, as $\lambda$ decreases, Theorem \ref{thm:suboptimal}(a) implies that
	the total number of serious iteration indices increases,
	while Theorem \ref{thm:suboptimal}(b)
	implies that bound \eqref{cmplx:null} on the number of null iterations
	between any two consecutive serious iterations decreases.
	Second, in the unusual case where
	$C M_f d_0/\bar \varepsilon \le 1$ for a given universal constant $C>0$, it can be easily seen
	that \eqref{cmplx:total-strong} reduces to
	${\cal O}([\kappa + C^{-1} + 1][C^{-2}\kappa^{-1}+1])$
	where $\kappa := \lam M_f^2/\bar \varepsilon$. Hence, the
	$\bar \varepsilon$-iteration complexity
	bound of
	RPB reduces to ${\cal O}((1+C^{-1})^2)$ when
	$\lam$ is chosen as $\lam = \bar \varepsilon/(C M_f^2)$.

	We now make a few remarks about some of the input
	required by the CS-CS and RPB
	methods as well as the assumptions made to obtain iteration-complexity bounds
	for them. 
	First, none of the two methods requires the
	availability of a Lipschitz constant $M_h$ as in (A4).
	Second, while the CS-CS method
	uses $ M_f $ as input, RPB has the advantage of not needing it.
	Third, iteration-complexity bounds for both of them have been established for any
	choice of initial point $x_0 \in \dom h$
	and regardless
	of whether $M_h$ is finite or not
	(see Theorem \ref{thm:suboptimal}(c) and Proposition \ref{prop:sub-new}).
	Fourth, complexity bound \eqref{eq:bound} for
	the CS-CS method,
	and the one for RPB implied by
	\eqref{cmplx:total-strong}
	where $\min\{\lam M, \tilde \lam M_f+ d_0\}$ is replaced by $\tilde \lam M_f+d_0$,
	do not depend on $ M_h $.
	
	Under some reasonable conditions on
	 the triple $(\mu,M_f,M_h)$, 
	the next two results describe ranges on the prox stepsize
	$\lam$ which guarantee that the $\bar \varepsilon$-iteration complexity \eqref{cmplx:total-strong} of RPB reduces to
	that of the CS-CS method, namely \eqref{eq:bound}.
	The first result covers the strongly convex case
	where $\mu$ is not too large and allows $M_h$ to be arbitrary.
	
	\begin{corollary}\label{thm:bound-strg}
		Let universal constants $C, C' >0$ be given and consider an instance $(x_0,(f,f';h))$ of \eqref{eq:ProbIntro}
		which satisfies (A1)-(A4) with parameter triple $(M_f,M_h,\mu)$ such that
		\begin{equation}\label{ineq:assumption1}
		\frac{C M_f d_0}{\bar \varepsilon}\ge 1, \qquad  M_h \in [0,+\infty], \qquad 0\le \mu \le \frac{C' M_f}{d_0}.
		\end{equation}
		Then, RPB$(x_0, \lam, \bar \varepsilon/2)$ with any $\lam$ lying in the (nonempty) interval
		\begin{equation}\label{ineq:lam2}
			\frac{d_0}{M_f}
			\le \lam \le   \frac{C d_0^2}{\bar \varepsilon}
		\end{equation}
		has $\bar \varepsilon$-iteration complexity bound given by \eqref{eq:bound}.
	\end{corollary}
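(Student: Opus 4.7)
The plan is to substitute the two-sided bounds on $\lambda$ and $\mu$ from \eqref{ineq:assumption1}--\eqref{ineq:lam2} into the general complexity bound \eqref{cmplx:total-strong} of Theorem \ref{thm:suboptimal}(c) and verify, term by term, that the resulting expression collapses to \eqref{eq:bound}. First I would note that the interval in \eqref{ineq:lam2} is nonempty precisely because of the first inequality in \eqref{ineq:assumption1}, so the statement is self-consistent. Next I would choose the second element of the inner min $\{\lam M,\tilde\lam M_f + d_0\}$ appearing in \eqref{cmplx:total-strong}, namely $\tilde\lam M_f + d_0$; crucially, this drops any dependence on $M_h$, which is what allows $M_h$ to be arbitrary in \eqref{ineq:assumption1}. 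Up to universal constants it then suffices to bound
\[
\left(\frac{M_f(\tilde\lam M_f + d_0)}{\bar\varepsilon}+1\right)\left[\min\left\{\frac{d_0^2}{\lam\bar\varepsilon},\,\frac{1}{\tilde\lam\mu}\log\!\left(\frac{\mu d_0^2}{\bar\varepsilon}+1\right)\right\}+1\right]
\]
by the expression in \eqref{eq:bound}.

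The main tools are the elementary estimates (i) $\tilde\lam\le\lam\le Cd_0^2/\bar\varepsilon$ and $\tilde\lam\le 1/\mu$ from \eqref{def:tlam}; (ii) $\lam\ge d_0/M_f$; (iii) $\mu d_0\le C'M_f$; and (iv) $M_f d_0/\bar\varepsilon\ge 1/C$. For the convex branch (the first element of the inner min), I expand the product into four summands and bound each by $M_f^2 d_0^2/\bar\varepsilon^2$: the leading term $M_f^2\tilde\lam d_0^2/(\lam\bar\varepsilon^2)$ uses (i); the cross term $M_f d_0^3/(\lam\bar\varepsilon^2)$ uses (ii); $M_f^2\tilde\lam/\bar\varepsilon$ uses $\tilde\lam\le Cd_0^2/\bar\varepsilon$ from (i); and $M_f d_0/\bar\varepsilon$ together with $d_0^2/(\lam\bar\varepsilon)$ are absorbed using (iv) and (ii). This recovers the first element of the min in \eqref{eq:bound} (with the additive constant $1$ absorbed into $\mathcal{O}_1$).

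For the strongly convex branch, writing $L:=\log(\mu d_0^2/\bar\varepsilon+1)$ and using the identity $1/(\tilde\lam\mu) = 1/(\lam\mu)+1$ from \eqref{def:tlam}, the target becomes $(M_f^2/(\mu\bar\varepsilon)+1)L$. Expanding the product, the leading term $M_f\cdot\tilde\lam M_f\cdot L/(\tilde\lam\mu\bar\varepsilon)$ simplifies exactly to $M_f^2 L/(\mu\bar\varepsilon)$, matching the dominant contribution. The $d_0$-cross terms are handled using (ii) ($M_f d_0/(\lam)\le M_f^2$, giving $M_f d_0L/(\lam\mu\bar\varepsilon)\le M_f^2L/(\mu\bar\varepsilon)$) and (iii) ($M_f d_0/\bar\varepsilon\le C'M_f^2/(\mu\bar\varepsilon)$); the remaining $\tilde\lam$-term uses $\tilde\lam\le 1/\mu$; the term $L/(\lam\mu)$ uses (ii) then (iv). Taking the smaller of the two branch bounds then yields \eqref{eq:bound}, with the underlying $\mathcal{O}_1$ constant depending only on $C$ and $C'$ and on the universal constants already appearing in Theorem~\ref{thm:suboptimal}(c).

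The main obstacle I anticipate is simply the bookkeeping: one must verify that every one of roughly eight cross-terms in each branch is absorbed, and in particular must handle the edge case $\mu=0$, where by the convention stated after \eqref{eq:bound} the min reduces to its first element and the logarithmic branch need not be analyzed at all. There is no conceptual difficulty beyond patient application of (i)--(iv); the only place where the assumption $\mu\le C'M_f/d_0$ is genuinely needed is in controlling the term $M_f d_0/\bar\varepsilon$ against $M_f^2/(\mu\bar\varepsilon)$, and the only place where $\lam\ge d_0/M_f$ is needed is in handling the term $d_0^2/(\lam\bar\varepsilon)$ in both branches, which explains the specific form of the assumptions.
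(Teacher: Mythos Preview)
Your overall plan is sound and matches the paper's: pick the $M_h$-free branch $\tilde\lam M_f + d_0$ in the first factor of \eqref{cmplx:total-strong}, then use the constraints \eqref{ineq:assumption1}--\eqref{ineq:lam2} to collapse the bound to \eqref{eq:bound}. The nonemptiness observation and the edge case $\mu=0$ are handled correctly.

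There is, however, a genuine gap in your strongly convex branch. When you expand $(A+1)(B'+1)$ with $A=M_f(\tilde\lam M_f+d_0)/\bar\varepsilon$ and $B'=L/(\tilde\lam\mu)$, you account for $AB'$ and $B'$ but not for the contribution $A\cdot 1$ coming from the $+1$ in the second bracket. Your estimates (i) and (iii) give $A\le(1+C')M_f^2/(\mu\bar\varepsilon)$, but this is \emph{not} dominated by $(M_f^2/(\mu\bar\varepsilon)+1)L+1$ when $L=\log(\mu d_0^2/\bar\varepsilon+1)$ is small (e.g.\ when $\mu d_0^2/\bar\varepsilon\ll 1$). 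So your Strategy~2 does not by itself yield ${\cal O}_1$ of the second minimand in \eqref{eq:bound}, and the final step ``take the smaller of the two branch bounds'' does not close.

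The paper's proof avoids this issue by a different organization. It first uses $\lam\ge d_0/M_f$ and $\mu\le C'M_f/d_0$ together to get $1/\tilde\lam\le(C'+1)M_f/d_0$, i.e.\ $d_0\le(C'+1)\tilde\lam M_f$; this reduces the first factor to ${\cal O}(a+1)$ with $a:=\tilde\lam M_f^2/\bar\varepsilon$. Setting $b:=\min\{d_0^2/(\lam\bar\varepsilon),\,L/(\tilde\lam\mu)\}$, the complexity is ${\cal O}((a+1)(b+1))$. The crucial extra step is to show that both $a$ and $b$ are bounded \emph{below} by constants depending only on $C,C'$: for $a$ this follows from $\tilde\lam M_f\ge d_0/(C'+1)$ and $M_fd_0/\bar\varepsilon\ge 1/C$; for $b$ one uses $\lam\le Cd_0^2/\bar\varepsilon$ together with the elementary bound $\log(1+t)\ge\min\{t/2,\log 2\}$ applied to $t=\lam\mu/C$. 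Once $a,b\ge c>0$, the identity $(a+1)(b+1)={\cal O}(ab+1)$ is immediate, and $ab=\min\{\tilde\lam M_f^2 d_0^2/(\lam\bar\varepsilon^2),\,M_f^2 L/(\mu\bar\varepsilon)\}$ reduces to \eqref{eq:bound} using only $\tilde\lam\le\lam$. This lower-bound-on-$b$ step is exactly what your expansion is missing; once you add it, all of your $A\cdot 1$ terms are absorbed via $A\le Ab/c$, and your argument goes through.
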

	
	
	
	\begin{proof}
		First, the conclusion that the set of $\lambda$ satisfying \eqref{ineq:lam2} is nonempty follows directly from the first inequality in \eqref{ineq:assumption1}.
	 Now, the assumption that $ C' M_f/d_0 \ge \mu $, the first inequality in \eqref{ineq:lam2}, and the definition of $\tilde \lam$ in \eqref{def:tlam},
	 imply that
		\[
		(C'+1) \frac{M_f}{d_0} \ge \mu + \frac1\lam = \frac{1}{\tilde \lam},
		\]
		and hence that
		\begin{equation}\label{ineq:tlam}
	        (C'+1)\tilde \lam M_f \ge d_0.   
	    \end{equation}
	    Defining
		\[
		a= \frac{\tilde \lam M_f^2}{\bar \varepsilon}, \quad 
		b= \min \left\lbrace \frac{d_0^2}{\lam \bar \varepsilon}, 
		\frac{1 }{\tilde \lam \mu} \log\left(  \frac{\mu d_0^2}{\bar \varepsilon} +1 \right) \right\rbrace,
		\]
	    and using Theorem \ref{thm:suboptimal}(c) and \eqref{ineq:tlam}, we conclude that
		$ {\cal O}((a+1)(b+1)) $ 
		is a $ \bar \varepsilon $-iteration complexity bound for RPB$ (x_0, \lam, \bar \varepsilon/2) $.
		Now, \eqref{ineq:tlam} and the first inequality in \eqref{ineq:assumption1} can be easily seen to imply that
		$ a \ge 1/[C(C'+1)]$.
		Moreover, it follows from the definition of $b$ and the second inequality in \eqref{ineq:lam2} that
		\begin{equation}\label{ineq:b}
		b\ge \min \left\lbrace \frac1C \, ,  \,
		\frac{1 }{\tilde \lam \mu} \log\left(  \frac{\lam \mu}{C} +1 \right) \right\rbrace.    
		\end{equation}
		Using the fact
		that $\log(1+t) \ge t/(1+t)$ for every $t>0$,
		we easily see that
		$\log(1+t) \ge t/2$ if
		$t \le 1$ and
		$\log(1+t) \ge \log 2>0$ if $t \ge 1$. 
		This observation with $t=\lam \mu/C$ and the definition of $\tilde \lam$ in \eqref{def:tlam} then imply that
		\[
		\frac{1}{\tilde \lam \mu} \log\left(  \frac{\lam \mu}{C} +1 \right) 
		\ge \min \left \lbrace \frac{\lam}{2\tilde \lam C} \, , \, \left( 1+ \frac{1}{\lam \mu} \right) \log2 \right \rbrace 
		\ge \min \left \lbrace \frac{1}{2 C} \, , \, \log2 \right \rbrace,
		\]
		and hence that
		$b \ge \min\{ 1/(2C), \log 2 \}$.
        This inequality and the fact that $ a \ge 1/[C(C'+1)]$ imply that
		$ {\cal O}((a+1)(b+1)) $
		is equal to
		${\cal O}(ab+1)$.
		Using this observation, the definitions
		of $a$ and $b$, and the fact that $\tilde \lam \le \lam$,
		we then conclude that
		the bound 
		$ {\cal O}((a+1)(b+1)) $
		reduces to \eqref{eq:bound},
		and hence that the lemma holds.
	\end{proof}
	
	We now make a few remarks about an instance $(x_0,(f,f';h))$ which satisfies the
	assumptions of Corollary \ref{thm:bound-strg}.
	First, if $M_h=+\infty$ then it is possible to have $\dom h=\R^n$.
	Actually, if $\dom h$ is unbounded and $\mu>0$, then $M_h$ must be equal to $+\infty$.

	The second result covers the convex case (i.e., $\mu=0$)
	under the condition that the ratio $M_h/M_f$ is bounded
	and shows that \eqref{cmplx:total-strong} reduces to \eqref{eq:bound} for a larger range of $\lam$'s.

	\begin{corollary}\label{thm:bound-cvx}
		Let universal constants $C,C' >0$ be given and consider an instance $(x_0,(f,f';h))$ of \eqref{eq:ProbIntro}
		which satisfies (A1)-(A4) with parameter triple $(\mu,M_f,M_h)$ such that
		\begin{equation}\label{ineq:assumption2}
		    \frac{C M_f d_0}{\bar \varepsilon}\ge 1, \qquad M_h \le C' M_f, \qquad \mu=0.
		\end{equation}
		Then, RPB$(x_0, \lam, \bar \varepsilon/2)$ with any $\lam$ lying in the (nonempty) interval
		\begin{equation}\label{ineq:lam1}
			\frac{\bar \varepsilon}{C M_f^2} \le \lam \le \frac{C d_0^2}{\bar \varepsilon}
		\end{equation}
		has $\bar \varepsilon$-iteration complexity bound $ {\cal O}_1( M_f^2 d_0^2/ \bar \varepsilon^2)$, and hence agrees with \eqref{eq:bound}.
	\end{corollary}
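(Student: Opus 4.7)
The plan is to derive the bound directly from Theorem~\ref{thm:suboptimal}(c) specialized to $\mu=0$, much in the spirit of (but simpler than) the proof of Corollary~\ref{thm:bound-strg}. First I would verify that the interval \eqref{ineq:lam1} is nonempty: the inequality $\bar\varepsilon/(CM_f^2)\le Cd_0^2/\bar\varepsilon$ is equivalent to $CM_f d_0/\bar\varepsilon\ge 1$, which is precisely the first assumption in \eqref{ineq:assumption2}.

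Next, since $\mu=0$, definition \eqref{def:tlam} gives $\tilde\lam=\lam$, so bound \eqref{cmplx:total-strong} (using the first term in the min over $\{d_0^2/(\lam\bar\varepsilon),\,(\tilde\lam\mu)^{-1}\log(\cdots)\}$, per the convention below \eqref{eq:bound}) reduces to
\[
{\cal O}_1\!\left(\left(\frac{M_f \min\{\lam M,\,\lam M_f+d_0\}}{\bar\varepsilon}+1\right)\left(\frac{d_0^2}{\lam\bar\varepsilon}+1\right)\right).
\]
Using the assumption $M_h\le C' M_f$, so that $M=M_f+M_h\le (1+C')M_f$, I would bound $\min\{\lam M,\lam M_f+d_0\}\le \lam M\le (1+C')\lam M_f$, turning the first factor into ${\cal O}_1(\lam M_f^2/\bar\varepsilon+1)$, which is ${\cal O}(\lam M_f^2/\bar\varepsilon+1)$ with a universal constant depending on $C'$.

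I would then show that the two main terms $\lam M_f^2/\bar\varepsilon$ and $d_0^2/(\lam\bar\varepsilon)$ are each bounded below by positive universal constants on the range \eqref{ineq:lam1}. Indeed, the lower bound $\lam\ge \bar\varepsilon/(CM_f^2)$ yields $\lam M_f^2/\bar\varepsilon\ge 1/C$, while the upper bound $\lam\le Cd_0^2/\bar\varepsilon$ yields $d_0^2/(\lam\bar\varepsilon)\ge 1/C$. Consequently the $+1$ terms in each factor may be absorbed, and the product of the two factors is
\[
{\cal O}\!\left(\frac{\lam M_f^2}{\bar\varepsilon}\cdot\frac{d_0^2}{\lam\bar\varepsilon}\right) \;=\; {\cal O}\!\left(\frac{M_f^2 d_0^2}{\bar\varepsilon^2}\right),
\]
with $\lam$ cancelling cleanly; this matches \eqref{eq:bound} with $\mu=0$.

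I do not expect any genuine obstacle here: the proof is just an algebraic specialization of Theorem~\ref{thm:suboptimal}(c) to $\mu=0$ together with the inequalities bracketing $\lam$. The only subtle point — worth stating carefully — is the absorption of the $+1$'s, which relies essentially on the endpoints of \eqref{ineq:lam1} being calibrated so that each of the two factors in \eqref{cmplx:total-strong} is at least a universal constant; this in turn is exactly why the range \eqref{ineq:lam1} is strictly wider than \eqref{ineq:lam2} (no lower bound of the form $d_0/M_f$ is needed when $\mu=0$ and $M_h={\cal O}(M_f)$).
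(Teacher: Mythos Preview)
Your proposal is correct and follows essentially the same approach as the paper: specialize Theorem~\ref{thm:suboptimal}(c) to $\mu=0$, use $M_h\le C'M_f$ to replace $M$ by ${\cal O}(M_f)$, and then exploit the endpoints of \eqref{ineq:lam1} to simplify the product. The only cosmetic difference is in the final simplification: you lower-bound each of $\lam M_f^2/\bar\varepsilon$ and $d_0^2/(\lam\bar\varepsilon)$ by $1/C$ to absorb the $+1$'s and then multiply (the $\lam$'s cancel), whereas the paper instead upper-bounds each of these two quantities by $CM_f^2d_0^2/\bar\varepsilon^2$ and expands the product; both yield the same ${\cal O}_1(M_f^2 d_0^2/\bar\varepsilon^2)$.
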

	
	\begin{proof}
		First, the conclusion that the set of $\lambda$ satisfying \eqref{ineq:lam1} is nonempty follows immediately from the first inequality in \eqref{ineq:assumption2}.
		Moreover, it follows from the second inequality in \eqref{ineq:assumption2} and
		Theorem \ref{thm:suboptimal}(c) with $\mu=0$ that
		the $ \bar \varepsilon $-iteration complexity bound for RPB$ (x_0, \lam, \bar \varepsilon/2) $ is
		$ {\cal O}([1+\lam M_f^2/\bar \varepsilon ][1+d_0^2/(\lam \bar \varepsilon)]) $.
		Since \eqref{ineq:lam1} implies
		that $\max\{ \lam M_f^2/\bar \varepsilon \,,\, d_0^2/(\lam \bar \varepsilon ) \}  = {\cal O}( M_f^2d_0^2/\bar \varepsilon^2)$, we then conclude that 
		the previous bound reduces to $ {\cal O}_1(M_f^2 d_0^2/ \bar \varepsilon^2) $.
	\end{proof}
	
	We now make two remarks about an instance $(x_0,(f,f';h))$ which satisfies the
	assumptions of Corollary \ref{thm:bound-cvx}.
	First, if $h$ is an indicator function then $M_h=0$ and hence the
	second inequality in \eqref{ineq:assumption2} is trivially satisfied.
	Second, if $h$ is $\mu$-convex with $\mu>0$, then $\mu$ can not be large.
	Indeed, it can be easily  seen that
	$\mu \le 4 M_h/D_h$ where $D_h$ denotes the diameter of $\dom h$,
	and hence that $D_h$ is finite.
	

	We now make some remarks about Corollary \ref{thm:bound-cvx} in light of
	Corollary \ref{thm:bound-strg}.
	First, range \eqref{ineq:lam1} is larger than range \eqref{ineq:lam2} since they both have the same right endpoints and the
	left endpoint of the first one is the geometric mean of
	the endpoints of the latter one.
	Second, Corollary \ref{thm:bound-strg} also holds
	when $\mu=0$ but, because it does not assume any condition
	on $M_h$, its conclusion is only guaranteed for
	a smaller range on $\lam$.
	
    We end this subsection by arguing that the first inequality
    in \eqref{ineq:assumption1} or \eqref{ineq:assumption2} is a mild assumption.
    Indeed, for those instances which violate 
	this inequality, i.e., it satisfies
	$ C M_f d_0/\bar \varepsilon \le 1$,
	the second remark in the paragraph following Theorem \ref{thm:suboptimal}
	implies that RPB with $ \lam=\bar \varepsilon/(C M_f^2) $ finds a $ \bar \varepsilon $-solution of \eqref{eq:ProbIntro} in $ {\cal O}((1+C^{-1})^2) $ iterations.
	Hence, instances that do not satisfy this inequality are trivial.

	\subsection{Complexity bounds for other proximal bundle variants} \label{subsec:compare}

	Papers \cite{du2017rate,kiwiel2000efficiency} study a
	proximal bundle variant
	for solving
	the set constrained problem
	\begin{equation}\label{eq:set}
	    \min \{ \tilde f(x): x \in X \}
	\end{equation}
	where $X$ is a nonempty closed convex set\footnotemark
	\footnotetext{Actually, \cite{du2017rate} only considers the case where $X=\R^n$.}
	and $\tilde f$ is a $\mu$-convex
	($\mu \ge 0$) finite everywhere function such that
	a first-order oracle
	$\tilde f':\R^n \to \R$ satisfying
	$\tilde f'(x)\in \partial \tilde f(x)$ for every $x\in \R^n$ is available.
	The method of \cite{du2017rate,kiwiel2000efficiency} starts from some
	$x_0 \in X$ and also uses a 
	constant prox stepsize $\lam$,
	and hence is referred to as PBV$(x_0,\lam)$
	below.
	If $\{x_j\}$ denotes
	the sequence of iterates
	generated by PBV$(x_0,\lam)$ and
	\begin{equation}\label{def:tD}
		\tilde D = \tilde D[\tilde f]:=\sup\{ d(x_j,X^*): j\ge 0 \}, \quad \tilde M = \tilde M [\tilde f]:= \sup\{\|\tilde f'(x_j)\|:j\ge 0\},
	\end{equation}
	then,
	under the assumption that
	the set $X^*$ of optimal solutions of
	the above problem is nonempty,
	\cite{kiwiel2000efficiency} shows
	that PBV$(x_0,\lam)$ has
	$\bar \varepsilon$-iteration complexity bound
	\begin{equation}\label{cmplx:kiwiel}
		{\cal O}_1 \left( \frac{\tilde M^2 \tilde D^4}{ \lam \bar \varepsilon^3} \right),
	\end{equation}
	for the case in which $\mu=0$,
	and
	\cite{du2017rate} shows that
	PBV$(x_0,\lam)$ has
	$\bar \varepsilon$-iteration complexity bound\footnotemark
	\footnotetext{Actually, bound \eqref{cmplx:rus} has been formally derived in \cite{grimmer2019general}, which corrects a small error in the one derived in \cite{du2017rate}.} 
	given by 
	\begin{equation}\label{cmplx:rus}
		{\cal O}_1 \left( \left[\frac{\tilde M^{2} \lam}{ \alpha^{2} \bar \varepsilon} \log_1^+ \left(\frac{1}{ \alpha^{2} }\right)+\frac{1}{ \alpha}\right]
		\log_1^+ \left(\frac{\tilde f(x_{0})- \tilde f^*}{ \alpha \bar\varepsilon}\right)
		+\frac{\tilde M^2\lam}{ \alpha \bar \varepsilon} \log_1^+  \left(\frac{\tilde M^2\lam}{ \alpha \bar \varepsilon}\right)
		\right) 
	\end{equation}
	for the case where $\mu>0$,
	where $\alpha:=\min \{\lam \mu,1\}$ and $ \log_1^+(\cdot) $ is defined in Subsection \ref{subsec:DefNot}.

	For the purpose of comparing the
	implication of the above bounds with the $\bar \varepsilon$-iteration complexity bounds established in Corollaries \ref{thm:bound-strg} and \ref{thm:bound-cvx},
	we restrict our attention to the unconstrained CNCO problem \eqref{eq:ProbIntro} where $ f $ satisfies (A1)-(A3), $ h \equiv \mu \|\cdot - x_0\|^2/2 $ and $ x_0 $ is the initial point.

	Clearly, such an unconstrained CNCO problem can be solved
	by applying PBV$(x_0,\lam)$ to \eqref{eq:set} with $\tilde f=f+h$ and
	$X=\R^n$ and with
	first-order oracle
	$\tilde f':=f'+\mu(\cdot-x_0)$.
	As a consequence, the
	$\bar \varepsilon$-iteration complexity bound of PBV$(x_0,\lam)$ for solving 
	the aforementioned unconstrained CNCO problem in the above manner
	is given by \eqref{cmplx:kiwiel} with
	$\tilde f=f+h$ if $\mu=0$ and \eqref{cmplx:rus} if $\mu>0$.

	We will now derive
	$\bar \varepsilon$-iteration complexity
	bounds for PBV$(x_0,\lam)$ in terms of $ M_f $, $ d_0 $, $ \lam $ and $ \bar \varepsilon $ for any $\mu \ge 0$.
	We first claim that,
	for some constant $C'' > \sqrt{2}$ determined by
	the input of PBV$(x_0,\lam)$, we have:
	\begin{itemize}
		\item[a)] $\tilde D \le
		\sup_{j \ge 0} \{ \|x_j-x_0^*\|\} \le C'' (d_0+ \lam \tilde M)$ where $x_0^*$ is as in the line below \eqref{def:d0}; 
		\item[b)] if $2C'' \lam \mu\le 1$, then $\tilde M \le 2[M_f+\mu (1+C'') d_0]$.
	\end{itemize}
	Indeed, a) is proved in Lemma 4.1 of \cite{grimmer2019general}. To prove b),
	first note that
	the definition of $ \tilde f'$, \eqref{def:d0}, the assumption that $ f $ satisfies (A3), and the triangle inequality, imply 
	that for every $ j\ge 0 $,
	\begin{align*}
		\|\tilde f'(x_j)\| \le \| f'(x_j)\|+ \mu \|x_j-x_0\| 
		\le M_f + \mu \|x_j-x_0\| \le M_f + \mu ( d_0+\|x_j-x_0^*\|),
	\end{align*}
	and hence that $ \tilde M \le M_f + \mu(d_0 + \sup_{j \ge 0} \|x_j-x_0^*\| ) $, due to the definition of $\tilde M$ in \eqref{def:tD}.
	This inequality together with the second inequality in a) implies that
	$\tilde M \le M_f+\mu d_0 + \mu C'' (d_0 + \lam\tilde M)$, and hence that
	b) holds.
	
	Now, using \eqref{cmplx:kiwiel}, \eqref{cmplx:rus}, 
	and statements a) and b) above, we conclude that PBV$(x_0,\lam)$ has
	$\bar \varepsilon$-iteration complexity bound
	\begin{equation}\label{cmplx:kiwiel-1}
		{\cal O}_1 \left( \frac{ M_f^2 (d_0 + \lam M_f)^4}{ \lam \bar \varepsilon^3} \right)
	\end{equation}
	if $ \mu=0 $,
	and
	\begin{equation}\label{cmplx:rus-1}
		{\cal O}_1 \left( \left[\frac{M_f^2}{\lam \mu^2\bar \varepsilon} + \frac{d_0^2}{\lam \bar \varepsilon}\right] 
		 \log_1^+ \left(\frac{1}{\lam \mu}\right) \log_1^+ \left(\frac{\tilde f(x_{0})- \tilde f^*}{ \lam \mu \bar\varepsilon}\right)
		\right)
	\end{equation}
	if $\mu>0$ and $2 C'' \lam \mu \le 1$.
	In summary, we have argued
	that bound \eqref{cmplx:kiwiel} (resp.,  \eqref{cmplx:rus})
	obtained in \cite{kiwiel2000efficiency} (resp., \cite{du2017rate}) yields the
	$\bar \varepsilon$-iteration complexity 
	bound \eqref{cmplx:kiwiel-1}
	(resp., \eqref{cmplx:rus-1})
	if $ \mu=0 $ (resp., if $\mu>0$).
	
	In the remaining part of this subsection,
	we compare the $\bar \varepsilon$-iteration complexity bounds \eqref{cmplx:kiwiel-1} and \eqref{cmplx:rus-1} established for PBV$(x_0,\lam)$ and those for RPB$(x_0, \lam, \bar \varepsilon/2)$ presented in Corollaries \ref{thm:bound-cvx} and \ref{thm:bound-strg}.
	We first discuss the case of bound \eqref{cmplx:kiwiel-1} under
	the same assumption
	made in Corollary \ref{thm:bound-cvx}, i.e., the inequality $C M_f d_0/\bar \varepsilon \ge 1$ holds.
	Note that the arithmetic-geometric mean
	inequality implies that
	\[
	d_0 + \lam M_f = \frac{d_0}{3} + \frac{d_0}{3} + \frac{d_0}{3} + \lam M_f \ge 4\left( \frac1{27} d_0^3 \lam  M_f\right) ^{1/4},
	\]
	and hence that \eqref{cmplx:kiwiel-1} is minorized by
	${\cal O}_1 (M_f^3 d_0^3/\bar \varepsilon^3)$, which in turn is
	minorized
	by ${\cal O}_1 (M_f^2 d_0^2/\bar \varepsilon^2)$ in view of above assumption. Moreover, if
	$M_f d_0/\bar \varepsilon$ is significantly larger than $1$,
	then it also
	follows from the above reasoning that, for any $\lam>0$,
	bound \eqref{cmplx:kiwiel-1}
	is much worse
	than the $\bar \varepsilon$-iteration complexity bound
	${\cal O}_1(M_f^2 d_0^2/\bar \varepsilon^2)$ established in Corollary \ref{thm:bound-cvx}.

	We now discuss the case of bound \eqref{cmplx:rus-1} under the assumptions of Corollary \ref{thm:bound-strg}, i.e.,
	the two inequalities
	$C M_f d_0/\bar \varepsilon\ge 1$ and $C' M_f/d_0 \ge \mu$ hold.
	Since \eqref{cmplx:rus-1} was proved
	under the condition that
	$2C'' \lam \mu \le 1$,
	we also assume that this condition holds
	in this paragraph.
	The assumption that
	$ C' M_f/d_0 \ge \mu$ implies that \eqref{cmplx:rus-1}
	is equivalent to
	${\cal O}_1(M_f^2/(\lam \mu^2 \bar \varepsilon))$.
	In view of
	\eqref{eq:bound} and the fact
	that $2C'' \lam \mu \le 1$, the latter bound is and can only be as good
	as the bound in Corollary \ref{thm:bound-strg} (i.e., \eqref{eq:bound})
	when $\lam \mu$ is bounded away from zero and $\mu$ is not too small.
	On the other hand, it follows from Corollary \ref{thm:bound-strg} that the $\bar \varepsilon$-iteration complexity of
	RPB$(x_0,\lam,\bar \varepsilon/2)$ is given by \eqref{eq:bound}
	regardless of the sizes of
	 the quantities $(\lam \mu)^{-1}$ and $\mu^{-1}$ and this happens for
	 a reasonably large $\lam$-range which is independent of $\mu$.
	 Moreover, Corollary \ref{thm:bound-strg} does not assume the restrictive condition that $2C'' \lam \mu \le 1$.
	 
	 Finally, \cite{astorino2013nonmonotone} establishes an ${\cal O}_1( M_f^3 D^3/  \bar\varepsilon^3)$ $\bar \varepsilon$-iteration complexity bound for an alternative proximal bundle variant, where $D$ is the diameter of $X$.
	 Clearly, this bound is much worse than the bound established in Corollary \ref{thm:bound-cvx}.

	\section{Analysis of null iterations}\label{sec:null} 
	
	This section contains two subsections. The first one establishes a preliminary
	upper bound on the number
	of null iterations between two consecutive
	serious iterations. 
	The second one discusses the relationship between CS-CS and RPB,  and presents a result showing that the former one 
	can be viewed as a special instance of the latter
	one.
	
	\subsection{An upper bound on the number of consecutive null iterations}\label{subsec:pf}
	We assume throughout this subsection that
	$\ell_0$ 
	denotes an arbitrary
	serious iteration index (and hence it can be equal to zero) and
	$B(\ell_0) $ 
	denotes the set consisting of the next serious  iteration index $\ell_1$ (if any)
	and all null iteration
	indices between $\ell_0$ and $\ell_1$, i.e., $B\left(\ell_{0}\right)=\left\{\ell_{0}+1, \ldots, \ell_{1}\right\} $.
	
	
	
	We start by making some simple observations that immediately
	follow from the description of RPB.
	For any $j \in B(\ell_0)$,
	it follows from the definition of $ x_j^c $ in step 2 of RPB that
	$x^c_{j-1}=x_{\ell_0}$,
	and hence that
	\begin{align}
		\label{basic obs}
		\phi^\lam_j &= \phi + \frac1{2\lam} \|\cdot-x_{\ell_0}\|^2,\\
		\underline{\phi}_j^\lam(u) &= 
		f_j(u) + h(u) +\frac{1}{2\lam}\|u- x_{\ell_0} \|^2, \label{basic obs ii}
	\end{align}
	in view of the definitions of $\underline{\phi}_j^\lam$ and $\phi^\lam_j$ in \eqref{def:xj} and \eqref{def:philam}, respectively.
	Hence, it follows from the last identity and \eqref{def:xj} that
	\begin{equation}\label{xj}
		x_j=\underset{u \in \R^n} \argmin \left\lbrace f_j(u) + h(u) +\frac{1}{2\lam}\|u- x_{\ell_0} \|^2 \right\rbrace
		\quad \forall j \in B(\ell_0).
	\end{equation}
	
	
	We now make a few immediate observations that will be used in the analysis of this subsection.
	First, it follows from the above equation that
	\begin{equation}\label{obs1}
		\frac1\lam (x_{\ell_0}-x_j) \in
		\partial \left(f_j +h \right)(x_j).
	\end{equation}
	Second, since \eqref{basic obs} implies that the function
	$\phi^\lam_j$ remains the same  whenever $j \in B(\ell_0)$ and
	$\ell_0$ remains fixed  throughout
	the analysis of this section,
	we will
	simply denote the function $\phi_j^\lam$ for $j \in B(\ell_0)$ by $\phi^{\lam}$, i.e., 
	\begin{equation}\label{eq:simple}
		\phi^{\lam} = \phi^\lam_j \quad \forall j \in B(\ell_0).
	\end{equation}
	Third, in view of the definition of $ \tx_j $ in \eqref{def:txj} (see also the second remark in the second paragraph following RPB)
	and the above relation, it then follows that
	\begin{equation}
		\tx_j \in \Argmin \left\lbrace \phi^\lam(x) :
		x \in \{\tx_{\ell_0},x_{\ell_0+1},\ldots,x_j\}
		\right\rbrace. \label{def:tx}
	\end{equation}
	Fourth, it directly follows from \eqref{xj} and \eqref{def:tx} that $\{x_j, \tx_j\}  \subset \dom h$.
	Fifth, $\ell_1$ is characterized as the first index $j>\ell_0$ satisfying
	condition \eqref{ineq:hpe1}.
	Sixth, it will be shown below that
	the sequence $ \{t_j: j\in B(\ell_0) \} $,
	where $t_j$ is	defined in \eqref{ineq:hpe1}, is non-increasing (see Lemma \ref{lem:mj}(b)) and
	converges to zero with an ${\cal O}(1/j)$ convergence rate (see Proposition \ref{prop:rate}).
	
	
	The following result describes some basic facts
	about the prox subproblem \eqref{def:mj*}
	and the prox bundle subproblem \eqref{def:xj}.
	
	\begin{lemma}\label{lem:101}
		For every $ j\in B(\ell_0) $, define
		\begin{equation}\label{def:mj*}
			m_j^*:=\min \left\lbrace \phi^\lam(u):u\in \R^n\right\rbrace 
		\end{equation}
		where $ \phi^\lam $ is as in \eqref{eq:simple}.
		Then, 
		for every $j \in B(\ell_0)$ and $ u \in \dom h$, we have
		\begin{equation}\label{ineq:prox}
			f(u)\ge f_j(u), \quad \phi^\lam(u) \ge \underline{\phi}_j^\lam(u), \quad \phi^\lam(u)\ge m_j^*\ge m_j.
		\end{equation}
		As a consequence, $t_j \ge \phi^\lam(\tx_j)- m_j^* \ge 0$ where $t_j$ is as in \eqref{ineq:hpe1}.
	\end{lemma}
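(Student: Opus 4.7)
The plan is to establish the three inequalities in \eqref{ineq:prox} in order, each being essentially immediate from the previous one, and then deduce the consequence from the fourth observation above the lemma.

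First, I would prove $f(u) \ge f_j(u)$ for all $u \in \dom h$ by invoking the subgradient inequality for each $x \in C_j$: since $f'(x) \in \partial f(x)$ by (A1), convexity of $f$ gives $f(u) \ge f(x) + \langle f'(x), u-x\rangle$. Taking the maximum over $x \in C_j$ and comparing with \eqref{def:fj} yields the first inequality.

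Second, to obtain $\phi^\lam(u) \ge \underline{\phi}_j^\lam(u)$, I would simply add $h(u) + \frac{1}{2\lam}\|u - x_{\ell_0}\|^2$ to both sides of the first inequality and use \eqref{basic obs}, \eqref{basic obs ii}, and \eqref{eq:simple}. Moreover, I would observe that this inequality extends to all $u \in \R^n$: for $u \notin \dom h$ both sides equal $+\infty$ since $h(u) = +\infty$. The inequality $\phi^\lam(u) \ge m_j^*$ is immediate from \eqref{def:mj*}, while $m_j^* \ge m_j$ follows by taking the infimum of the extended inequality $\phi^\lam \ge \underline{\phi}_j^\lam$ over $\R^n$ and using the definition $m_j = \underline{\phi}_j^\lam(x_j) = \min_u \underline{\phi}_j^\lam(u)$ from step 1 of RPB.

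For the consequence, I would use the fourth observation in the paragraph preceding the lemma, namely $\tx_j \in \dom h$, to ensure $\phi^\lam(\tx_j)$ is finite. The definition of $t_j$ in \eqref{ineq:hpe1} together with \eqref{eq:simple} gives $t_j = \phi^\lam(\tx_j) - m_j$, and combining with $m_j^* \ge m_j$ yields $t_j \ge \phi^\lam(\tx_j) - m_j^* \ge 0$, where the last inequality is \eqref{def:mj*} evaluated at $\tx_j$.

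I do not anticipate any real obstacle: each step is a one-line manipulation of definitions. The only point requiring a moment of care is justifying $m_j^* \ge m_j$ by extending the pointwise inequality $\phi^\lam \ge \underline{\phi}_j^\lam$ from $\dom h$ to $\R^n$ via the $+\infty$ convention, so that taking minima over $\R^n$ on both sides preserves the order.
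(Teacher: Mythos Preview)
Your proposal is correct and follows essentially the same approach as the paper's own proof. The only difference is presentational: you make explicit the extension of $\phi^\lam \ge \underline{\phi}_j^\lam$ from $\dom h$ to all of $\R^n$ via the $+\infty$ convention before taking infima, whereas the paper leaves this implicit (both functions contain $h$ and hence are $+\infty$ outside $\dom h$, so the minima in \eqref{def:mj*} and step~1 of RPB are effectively over $\dom h$ anyway).
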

	\begin{proof}
		It follows from the definition of $f_j$ in \eqref{def:fj} and (A1) that the first inequality in \eqref{ineq:prox} holds.
		This inequality, and relations \eqref{basic obs}, \eqref{basic obs ii} and \eqref{eq:simple}, 
		imply that the second inequality in \eqref{ineq:prox} holds.
		It follows from the definition of $m_j^*$ in \eqref{def:mj*} that $ \phi^\lam(u)\ge m_j^* $ for every $ u\in \dom h $.
		Using the second inequality in \eqref{ineq:prox}, and the definitions of $m_j$ and $m_j^*$ in step 1 in RPB and \eqref{def:mj*}, respectively, we have $ m_j^*\ge m_j $. 
		Moreover, it follows from the fact that $ \{\tx_j\}\subset \dom h $ (see the fourth remark below \eqref{def:tx}),
		the last  two inequalities in \eqref{ineq:prox} with $u=\tx_j$,  and the definition of $t_j$ in \eqref{ineq:hpe1} that $t_j\ge \phi^\lam(\tx_j)-m_j^*\ge 0$.
		%
	\end{proof}
	
	
	The following technical result provides basic properties of RPB that are used in our
	analysis.
	
	\begin{lemma}\label{lem:simple}
		The following statements about the RPB method hold for every $j \in B(\ell_0)$:
		\begin{itemize}
			\item[a)]
			for every $x \in C_j$, we have
			$f(x)= f_j(x)$;
			\item[b)] for every $ i \in B(\ell_0)$ such that $i<j $, we have
			$ \phi^\lam(\tx_j) \le \phi^\lam(\tx_i) $;
			\item[c)] $t_j \le f(x_j) - f_j(x_j)\le 2M_f\|x_j-x_{j-1}\| $;
			\item[d)]
			if $x_j \in C_j$ then
			$t_j=0$ and $j$ coincides with $\ell_1$ (i.e., the only serious iteration index
			in $B(\ell_0)$);
			\item[e)] $f_j$ is $M_f$-Lipschitz continuous on $\dom h$.
		\end{itemize}
	\end{lemma}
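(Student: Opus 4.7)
The plan is to tackle each of the five items in order, since later parts depend on earlier ones. Throughout, I would repeatedly exploit the definition \eqref{def:fj} of $f_j$ as a pointwise maximum of affine minorants of $f$, the optimality/defining properties of $x_j$ and $\tx_j$ (equations \eqref{xj} and \eqref{def:tx}), and the bundle update rule \eqref{def:Cj}.

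For (a), I would observe that each term in the max defining $f_j$ is an affine minorant of $f$ by convexity (since $f'(y) \in \partial f(y)$ for $y \in C_j$), so $f_j \le f$ pointwise, while selecting the term with $y = x$ itself gives $f(x)$, so the max at $x \in C_j$ equals $f(x)$. For (e), the same affine representation shows each term is $M_f$-Lipschitz (via $\|f'(y)\| \le M_f$ from (A3)), and a pointwise max of $M_f$-Lipschitz functions remains $M_f$-Lipschitz on $\dom h$. For (b), I would note that \eqref{def:txj} plus \eqref{eq:simple} yield the monotone recursion $\phi^\lam(\tx_j) \le \phi^\lam(\tx_{j-1})$ for every $j \in B(\ell_0)$ with $j > \ell_0$, which I would iterate down to $\tx_i$.

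For (c), the first inequality comes from comparing the two expressions: $\phi^\lam(x_j) - m_j = f(x_j) - f_j(x_j)$ (the prox and $h$ terms cancel in view of \eqref{basic obs}--\eqref{basic obs ii}), together with $\phi^\lam_j(\tx_j) \le \phi^\lam_j(x_j) = \phi^\lam(x_j)$ by the definition of $\tx_j$ in \eqref{def:txj}. The second inequality is where the bundle update matters: I need $x_{j-1} \in C_j$, which follows from \eqref{def:Cj} in a null iteration and from step 2.a when $j-1 = \ell_0$. Given that, the affine piece of $f_j$ at $x_{j-1}$ gives $f_j(x_j) \ge f(x_{j-1}) + \inner{f'(x_{j-1})}{x_j - x_{j-1}}$, so Cauchy--Schwarz, the bound $\|f'(x_{j-1})\| \le M_f$, and the Lipschitz estimate \eqref{ineq:func} yield $f(x_j) - f_j(x_j) \le 2M_f \|x_j - x_{j-1}\|$.

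For (d), combining (a) with (c) gives $t_j \le f(x_j) - f_j(x_j) = 0$ whenever $x_j \in C_j$; since $t_j \ge 0$ by Lemma \ref{lem:101} (applied at $u = \tx_j \in \dom h$), we get $t_j = 0 \le \delta$, which triggers the serious-iteration test \eqref{ineq:hpe1}, forcing $j = \ell_1$. The only mild obstacle is the bundle bookkeeping in (c): making sure that $x_{j-1} \in C_j$ for every $j \in B(\ell_0)$, including the boundary case $j = \ell_0 + 1$; this is transparent from \eqref{def:Cj} together with the requirement $\{x_{\ell_0}\} \subset C_{\ell_0+1}$ built into step 2.a.
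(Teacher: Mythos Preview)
Your proposal is correct and follows essentially the same approach as the paper's proof, with only minor stylistic differences: for (e) you use the elementary fact that a pointwise maximum of $M_f$-Lipschitz affine functions is $M_f$-Lipschitz, whereas the paper invokes the subdifferential formula for a finite max; and in (c) you are slightly more explicit than the paper about why $x_{j-1}\in C_j$ holds in the boundary case $j=\ell_0+1$ (step 2.a) versus the null case (rule \eqref{def:Cj}).
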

	\begin{proof}
		a) Let $x \in C_j$ be given.
		Using the first inequality in \eqref{ineq:prox},
		the assumption that $x \in C_j$, and the definition of $f_j$ in \eqref{def:fj},
		we conclude that
		$
		f \ge f_j \ge f(x) + \inner{f'(x)}{\cdot-x}
		$,
		and hence that
		$f(x) \ge f_j(x) \ge f(x)  + \inner{f'(x)}{x-x} = f(x)$.
		Thus, a) follows.
		
		b) This statement follows immediately from \eqref{def:tx}.
		
		c) Using the definitions of $ t_j $ and $ m_j $ in \eqref{ineq:hpe1} and step 1 of RPB, respectively, relations \eqref{basic obs ii}, \eqref{eq:simple} and \eqref{def:tx}, and the fact that $ \phi=f+h $, we have
		\[
		t_{j}
		=\phi^\lam(\tx_{j})-m_{j}
		\le \phi^\lam(x_{j})- \underline{\phi}_j^\lam(x_j) 
		=  f(x_{j})-f_{j}(x_{j}),
		\]
		and hence the first inequality in the statement holds.
		Next we show the second inequality in the statement.
		It follows from \eqref{def:Cj}
		with $j=j-1$ that $ x_{j-1}\in C_{j} $.
		This inclusion and the definition of $ f_j $ in \eqref{def:fj}  imply that
		\[
		f_j(\cdot) \ge f(x_{j-1}) + \inner{f'(x_{j-1})}{\cdot-x_{j-1}},
		\]
		and hence that
		\begin{align}
			f(x_j) - f_j(x_j)&\le f(x_j) - [ f(x_{j-1}) + \inner{f'(x_{j-1})}{x_{j} - x_{j-1}}] \nn \\
			&\le |f(x_j)-f(x_{j-1})|+\|f'(x_{j-1})\| \|x_j-x_{j-1}\| \label{ineq:fj}
		\end{align}
		where the second inequality is due to the triangle and the Cauchy-Schwarz inequalities.
		The second inequality in the statement now follows from
		(A3), \eqref{ineq:func}, the fact that $ \{x_j\}\subset \dom h $ (see the fourth remark below \eqref{def:tx}), and inequality \eqref{ineq:fj}.

		d) Assume that $ x_j\in C_j $. It then follows from statement a) with $x=x_j$ and the first inequality in statement c) that $ t_j\le 0 $.
		In view of Lemma \ref{lem:101}, we then conclude that $ t_j =0 $.
		In view of step 2 of RPB, this implies
		that $ j $ is a serious iteration index.
		Thus,  since
		$\ell_1$ is the only serious iteration index in $B(\ell_{0})$, we must
		have $ j=\ell_1 $.
		
		e)
		It follows from (A3),
		the definition of $f_j$ in \eqref{def:fj}, and a
		well-known formula for the subdifferential of the pointwise
		maximum of finitely many affine functions
		(e.g., see Example 3.4 of \cite{urruty1996convex1}) that 
		$f_j$ is $M_f$-Lipschitz continuous on $\dom h$.
	\end{proof}
	
	\vgap
	
	The following result gives a few useful properties about the relationship
	between the  active sets $\{A_j: j \in B(\ell_0)\}$ and the iterates
	$\{x_j: j \in B(\ell_0)\}$.
	
	\begin{lemma}\label{lem:Aj}
		Define
		\begin{equation}\label{def:fAj}
			f_{A_j}(\cdot) := \max \left\lbrace f(x)+\inner{ f'(x)}{\cdot-x}: x\in A_{j} \right\rbrace  \quad\forall j \in B(\ell_0)
		\end{equation}
		where $A_j$ is as in \eqref{def:A_j}.
		Then, the following statements hold for every $j \in B(\ell_0)$:
		\begin{itemize}
			\item[a)] $(f_{A_j}+h)(x_j) = (f_j+h)(x_j)$ and
			$\partial(f_{A_j}+h)(x_j)= \partial (f_j+h)(x_j)$;
			\item[b)] $f_{A_j} \le \min \{ f_j , f_{j+1}\}$;
			\item[c)] we have
			\begin{align}\label{def:new}
				x_j &= \underset{u\in \R^n}\argmin \left\{ (f_{A_j}+h)(u) + \frac1{2\lam} \|u-x_{\ell_0}\|^2 \right\}, \\ 
				m_j&=\underset{u\in \R^n}\min \left\{ (f_{A_j}+h)(u) + \frac1{2\lam} \|u-x_{\ell_0}\|^2 \right\} \nn
			\end{align}
			where $m_j$ is as in step 1 of RPB;
			\item[d)] for every $u \in \R^n$, we have
			\[
			(f_{A_j}+h)(u) + \frac1{2\lam} \|u-x_{\ell_0}\|^2 \ge m_j + \frac1{2 \tilde \lam} \|u-x_{j}\|^2
			\]
			where $\tilde \lam$ is as in \eqref{def:tlam}.
		\end{itemize}
	\end{lemma}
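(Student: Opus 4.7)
My plan is to prove the four statements in order, using only the definition of $A_j$ as the set of pieces of $f_j$ that are active at $x_j$, the bundle update rule \eqref{def:Cj}, the standard subdifferential formula for a pointwise maximum of affine functions, and strong-convexity of the prox-bundle objective.

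For (a), I would argue directly from \eqref{def:A_j}: every $x \in A_j$ attains the maximum in \eqref{def:fj} at $x_j$ (i.e., $f(x)+\langle f'(x),x_j-x\rangle = f_j(x_j)$), while inactive pieces strictly underachieve that value. Hence $f_{A_j}(x_j) = f_j(x_j)$, which yields the first equality after adding $h(x_j)$. For the subdifferential equality, the formula for the subgradient of a max of finitely many affine functions gives $\partial f_j(x_j) = \mathrm{conv}\{f'(x) : x \in A_j\} = \partial f_{A_j}(x_j)$, since only active pieces contribute; adding $\partial h(x_j)$ via the Moreau-Rockafellar sum rule (applicable because $f_j,f_{A_j}$ are finite everywhere) finishes the proof.

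For (b), the inclusion $A_j \subset C_j$ gives $f_{A_j} \le f_j$ directly from \eqref{def:fAj}, since the maximum in $f_{A_j}$ is taken over a smaller index set. For the bound $f_{A_j} \le f_{j+1}$, the left inclusion in \eqref{def:Cj} gives $A_j \subset C_{j+1}$, and the same monotonicity of the max yields the claim.

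For (c), using part (a), the first-order optimality condition $0 \in \partial(f_j+h)(x_j) + (x_j-x_{\ell_0})/\lam$ characterizing $x_j$ in \eqref{xj} is equivalent to $0 \in \partial(f_{A_j}+h)(x_j) + (x_j-x_{\ell_0})/\lam$. Since the latter objective is convex, this makes $x_j$ its unique minimizer, and the minimum value equals $m_j$ because the values at $x_j$ of both objectives coincide by the identity in (a).

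For (d), note that $f_{A_j}+h$ is $\mu$-convex (since $f_{A_j}$ is convex and $h$ is $\mu$-convex by (A3)), while the prox term is $1/\lam$-strongly convex, so the objective from (c) is $(1/\lam+\mu) = 1/\tilde\lam$-strongly convex in view of \eqref{def:tlam}. Since (c) identifies $x_j$ as its minimizer with value $m_j$, the standard strong-convexity lower bound $g(u) \ge g(x_j) + \|u-x_j\|^2/(2\tilde\lam)$ applied at $u$ delivers the inequality. The main delicate point is the subdifferential identity in (a), which underlies the transfer of optimality of $x_j$ from the $f_j$-formulation to the $f_{A_j}$-formulation in (c), and hence the whole strong-convexity argument in (d); once (a) is in place, the remaining parts are routine.
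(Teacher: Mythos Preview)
Your proposal is correct and follows essentially the same approach as the paper's proof: both use the active-set characterization and the subdifferential formula for a finite max of affine functions to establish (a), the inclusions $A_j\subset C_j$ and $A_j\subset C_{j+1}$ (from \eqref{def:Cj}) for (b), transfer of the optimality condition via (a) for (c), and the $(\mu+1/\lam)$-strong convexity of the prox objective for (d). Your explicit invocation of the Moreau--Rockafellar sum rule in (a) is a minor added detail the paper leaves implicit.
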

	\begin{proof}
		%
		a) The first conclusion immediately follows from the definitions of $ A_j $ and $ f_{A_j} $ in \eqref{def:A_j} and \eqref{def:fAj}, respectively. Using
		the definition of $A_j$ in \eqref{def:A_j},
		the definition of $f_j$ in \eqref{def:fj},
		and a well-known formula for the subdifferential of the pointwise maximum of finitely many convex functions (e.g., see Corollary 4.3.2 of \cite{urruty1996convex1}), we conclude that $ \partial f_j(x_j) $ is the convex hull of $\cup \{ f'(x): x \in A_j\} $.
		Using the same reasoning but with \eqref{def:fj} replaced by
		\eqref{def:fAj}, we conclude that the latter set is also the
		subdifferential of $f_{A_j}$ at $x_j$. Hence,
		statement a) follows. 
		
		b) 
		Note that $ A_j \subset C_j $
		due to \eqref{def:A_j}. Also, it follows from
		rule \eqref{def:Cj} regarding the choice of $C_{j+1}$ that
		$ A_j \subset C_{j+1} $. Hence,
		the definitions of $ f_j $ and $ f_{A_j} $ in \eqref{def:fj} and \eqref{def:fAj}, respectively,
		imply that $f_{j+1} \ge f_{A_j}$ and
		$f_{j} \ge f_{A_j}$.
		Thus, (b) holds.
		
		c) It follows from \eqref{obs1} and the second identity in a) that
		\[
		\frac{1}{\lam}(x_{\ell_0}-x_j) \in \partial (f_j+h)(x_j)
		=\partial (f_{A_j}+h)(x_j).
		\]
		Using the definition of $m_j$ in step 1 of RPB, \eqref{basic obs ii}, the first identity in a), and the fact that
		the above inclusion implies that $x_j$ satisfies the
		optimality condition of \eqref{def:new}, we conclude that c) holds.
		
		d) This statement follows immediately from c),  the definition of $\tilde \lam$ in \eqref{def:tlam}, the fact
		that the objective function of \eqref{def:new} is $(\mu+1/\lam)$-strongly convex and Theorem 5.25(b) of \cite{beck2017first} with $ f=f_{A_j}+h+\|\cdot-x_{\ell_0}\|^2/(2\lam) $, $ x^*=x_j $ and $ \sigma=\mu + 1/\lam $. 
	\end{proof}

	The following lemma provides a bound on
	$\|x_j-x_{\ell_0}\|$ for $j \in B(\ell_0)$.

	\begin{lemma}\label{lem:ell0}
		Let $M=M_f+M_h$ and define $d_{\ell_0}:=\|x_{\ell_0}-x_0^*\| $ where $x_0^*$ is as in the line below \eqref{def:d0}. Then, the following statements hold:
		\begin{itemize}
			\item[a)]
			$\|x_j-x_{\ell_0}\| \le \lam M$ for every $j \in B(\ell_0)$; 
			\item[b)]
			if $ j \in B(\ell_0) $ is such that
			the bundle set $C_j$ contains
			$x_{\ell_0}$, then
			$\|x_j-x_{\ell_0}\| \le 2 d_{\ell_0} + 2\tilde \lam M_f $;
			\item[c)]
			$\|x_{\ell_0+1}-x_{\ell_0}\| \le \min\{\lam M, 2 d_{\ell_0} + 2 \tilde \lam M_f  \}$.
		\end{itemize}
	\end{lemma}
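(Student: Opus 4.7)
The plan is to exploit the fact that $x_j$ is the unique minimizer of the prox objective $F_j(u) := f_j(u) + h(u) + \|u - x_{\ell_0}\|^2/(2\lam)$, which by (A3) is $(1/\tilde\lam)$-strongly convex; equivalently, $F_j(y) \ge F_j(x_j) + \|y - x_j\|^2/(2\tilde\lam)$ for every $y \in \R^n$. Both $x_j$ and $x_{\ell_0}$ lie in $\dom h$ by the fourth remark following \eqref{def:tx}. All three parts will follow by specializing this inequality to either $y = x_{\ell_0}$ or $y = x_0^*$ and combining with appropriate upper bounds on $(f_j+h)(y) - (f_j+h)(x_j)$.

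For part (a), specializing to $y = x_{\ell_0}$ yields
\[
[f_j(x_{\ell_0}) - f_j(x_j)] + [h(x_{\ell_0}) - h(x_j)] \ge \frac{\|x_j - x_{\ell_0}\|^2}{2\lam} + \frac{\|x_j - x_{\ell_0}\|^2}{2\tilde\lam}.
\]
By the $M_f$-Lipschitz continuity of $f_j$ on $\dom h$ (Lemma \ref{lem:simple}(e)) and the $M_h$-Lipschitz continuity of $h$ on $\dom h$ (assumption (A4)), the left-hand side is at most $M\|x_j - x_{\ell_0}\|$; using $1/(2\lam) + 1/(2\tilde\lam) \ge 1/\lam$ then gives $\|x_j - x_{\ell_0}\| \le \lam M$.

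For part (b), the main task is to obtain a bound that avoids $M_h$. Specializing to $y = x_0^*$ gives an analogous inequality with an additional $d_{\ell_0}^2/(2\lam)$ prox contribution on the left. The hypothesis $x_{\ell_0} \in C_j$ provides via Lemma \ref{lem:simple}(a) that $f_j(x_{\ell_0}) = f(x_{\ell_0})$, and the cut at $x_{\ell_0}$ in \eqref{def:fj} yields $f_j(x_j) \ge f(x_{\ell_0}) + \inner{f'(x_{\ell_0})}{x_j - x_{\ell_0}}$. For the $h$-side, \eqref{ineq:func} implies that every element of $\partial f(x_0^*)$ has norm at most $M_f$, and $0 \in \partial \phi(x_0^*) = \partial f(x_0^*) + \partial h(x_0^*)$ furnishes $g \in \partial f(x_0^*)$ with $\|g\|\le M_f$ and $-g \in \partial h(x_0^*)$; $\mu$-convexity of $h$ at $x_0^*$ then gives $h(x_0^*) - h(x_j) \le \inner{-g}{x_0^* - x_j} - (\mu/2)\|x_j - x_0^*\|^2$. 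Combining with $f_j(x_0^*) \le f(x_0^*)$ and the convexity of $f$ at $x_0^*$ with subgradient $g$, the $g$-terms telescope to $\inner{g}{x_j - x_{\ell_0}} \le M_f\|x_j - x_{\ell_0}\|$, giving the overall upper bound $2M_f\|x_j - x_{\ell_0}\| - (\mu/2)\|x_j - x_0^*\|^2$ on $(f_j + h)(x_0^*) - (f_j + h)(x_j)$. Substituting into the strong convexity inequality and absorbing $(\mu/2)\|x_j - x_0^*\|^2$ into the prox term to upgrade the coefficient in front of $\|x_j - x_0^*\|^2$, a completion-of-squares step in $\|x_j - x_{\ell_0}\|$ yields $\|x_j - x_0^*\| \le d_{\ell_0} + 2\tilde\lam M_f$; the triangle inequality $\|x_j - x_{\ell_0}\| \le \|x_j - x_0^*\| + d_{\ell_0}$ then delivers the claim.

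Part (c) is then immediate: step 2.a of RPB mandates $x_{\ell_0} \in C_{\ell_0 + 1}$ whenever $\ell_0$ is a serious iteration index, so both (a) and (b) apply with $j = \ell_0 + 1 \in B(\ell_0)$, and taking the minimum of the two resulting bounds gives the stated inequality. The main obstacle is the constant-sharp version of part (b): the $(\mu/2)\|x_j - x_0^*\|^2$ contribution from the $\mu$-convexity of $h$ and the $\|x_j - x_0^*\|^2/(2\tilde\lam)$ contribution from the prox must be combined carefully so that, after completing squares, the coefficient in front of $M_f$ comes out as the sharp $\tilde\lam$ rather than the weaker $\lam$, and the coefficient of $d_{\ell_0}$ stays at $2$ rather than a larger multiple.
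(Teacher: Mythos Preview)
Your proofs of parts (a) and (c) are correct and match the paper's argument.

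Part (b) has two genuine gaps.

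\textbf{First gap.} The claim that ``\eqref{ineq:func} implies that every element of $\partial f(x_0^*)$ has norm at most $M_f$'' is false in general: Lipschitz continuity of $f$ on $\dom h$ only bounds the component of a subgradient along directions tangent to $\dom h$. For instance, if $\dom h$ is the $x$-axis in $\R^2$ and $f(x,y)=|x|+10|y|$, then $f$ is $1$-Lipschitz on $\dom h$ (and the oracle can return $f'(x,0)$ with norm $1$), yet $\partial f(0,0)=[-1,1]\times[-10,10]$. So the existence of $g\in\partial f(x_0^*)$ with $\|g\|\le M_f$ and $-g\in\partial h(x_0^*)$ is not justified. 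The paper avoids this entirely: it bounds $(f_j+h)(x_0^*)\le\phi^*$ and writes $(f_j+h)(x_j)=\phi(x_j)-[f(x_j)-f_j(x_j)]$, then uses the strong convexity of $\phi$ in the form $\phi(x_j)-\phi^*\ge\tfrac{\mu}{2}\|x_j-x_0^*\|^2$ together with the cut at $x_{\ell_0}$ (i.e., $f(x_j)-f_j(x_j)\le 2M_f\|x_j-x_{\ell_0}\|$, exactly as in Lemma~\ref{lem:simple}(c)). This produces the same intermediate bound $(f_j+h)(x_0^*)-(f_j+h)(x_j)\le 2M_f\|x_j-x_{\ell_0}\|-\tfrac{\mu}{2}\|x_j-x_0^*\|^2$ with no appeal to a decomposition $0=g+w$.

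\textbf{Second gap.} Even granting your intermediate bound, your completion-of-squares step does not deliver the sharp $\tilde\lam$. Writing $s=\|x_0^*-x_j\|$ and $t=\|x_j-x_{\ell_0}\|$, your two inequalities combine to
\[
\Bigl(\tfrac{1}{2\tilde\lam}+\tfrac{\mu}{2}\Bigr)s^2+\tfrac{1}{2\lam}t^2\;\le\;\tfrac{d_{\ell_0}^2}{2\lam}+2M_f\,t,
\]
and maximizing the right-hand side over $t$ gives $2M_f t-\tfrac{1}{2\lam}t^2\le 2\lam M_f^2$, not $2\tilde\lam M_f^2$. The resulting bound is $s^2\le (d_{\ell_0}^2+4\lam^2M_f^2)/(1+2\lam\mu)$, and already for $d_{\ell_0}=0$ this reads $s\le 2\lam M_f/\sqrt{1+2\lam\mu}$, which is strictly larger than $2\tilde\lam M_f=2\lam M_f/(1+\lam\mu)$ whenever $\mu>0$. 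You flag this as ``the main obstacle'' but do not actually overcome it. The difference from the paper's argument is where the $\mu$-contribution lands: the paper's inequality \eqref{ineq:basic} carries the coefficient $\tfrac{1}{2\tilde\lam}$ on $\|x_j-x_{\ell_0}\|^2$ (rather than on $\|u-x_j\|^2$), so that after invoking $\phi(x_j)-\phi^*\ge\tfrac{\mu}{2}\|x_j-x_0^*\|^2$ the completion of squares in $t$ is against $\tfrac{1}{2\tilde\lam}t^2$ and yields $2\tilde\lam M_f^2$ directly. Your placement of the strong-convexity gain entirely on the $s^2$ side leaves only $\tfrac{1}{2\lam}t^2$ available for the square completion, which is the source of the loss.
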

	
	\begin{proof}
		a)  
		Using Lemma \ref{lem:Aj}(b) and (d), and the definitions of $ m_j $ and in step 1 of RPB and \eqref{def:tlam}, respectively
		we conclude that for every $ u \in \dom h $,
		\begin{equation}\label{ineq:basic}
			\frac1{2 \lam} \|u-x_j\|^2 + (f_j+h)(x_j) + \frac{1}{2\tilde \lam}\|x_j-x_{\ell_0}\|^2
			\le (f_j+h)(u)+ \frac{1}{2\lam}\|u-x_{\ell_0}\|^2,
		\end{equation}
		which upon setting $ u=x_{\ell_0} $ yields 
		\[
		\frac{1}{\lam}\|x_j-x_{\ell_0}\|^2\le (f_j+h)(x_{\ell_0}) - (f_j+h)(x_j)
		\le M\|x_{\ell_0}-x_j\|
		\]
		where the last inequality is due to
		Lemma \ref{lem:simple}(e) and (A4).
		Hence, (a) follows.
		
		b) It follows from \eqref{ineq:basic} with $u=x_0^*$,
		the fact that $(f_j+h) (x_0^*) \le \phi(x_0^*) = \phi^*$, and the definition of $d_{\ell_0}$, that
		\[
		\frac1{2 \lam}  \|x_0^*-x_j\|^2 + \phi(x_j) - \phi^* 
		\le \frac{d_{\ell_0}^2}{2\lam}  +f(x_j) - f_j(x_j) - \frac{1}{2\tilde \lam} \|x_j-x_{\ell_0}\|^2.
		\]
		Using the above inequality, 
		Theorem 5.25(b) of \cite{beck2017first} with $(f, x^*, \sigma)=(\phi, x_0^*, \mu)$, and the definition of $\tilde \lam$ in \eqref{def:tlam}, we have
		\begin{equation}\label{ineq:new}
		\frac1{2 \tilde \lam}  \|x_0^*-x_j\|^2 
		= \frac1{2 \lam}  \|x_0^*-x_j\|^2 + \frac{\mu}{2}  \|x_0^*-x_j\|^2
		\le \frac{d_{\ell_0}^2}{2\lam}  +f(x_j) - f_j(x_j) - \frac{1}{2\tilde \lam} \|x_j-x_{\ell_0}\|^2.    
		\end{equation}
		Now, using the assumption that $ x_{\ell_0} \in C_j$ and an argument similar to one in
		the proof of Lemma \ref{lem:simple}(c), we can see that
		$ f(x_j)-f_j(x_j) \le 2M_f\|x_j-x_{\ell_0}\| $. This conclusions and \eqref{ineq:new} impliy that
		\begin{equation} \label{ineq:xj-x0}
			\frac1{2 \tilde \lam}  \|x_0^*-x_j\|^2
			\le \frac{d_{\ell_0}^2}{2\lam}  + 2M_f\|x_j-x_{\ell_0}\| - \frac{1}{2\tilde \lam} \|x_j-x_{\ell_0}\|^2
			\le \frac{d_{\ell_0}^2}{2\lam}  + 2\tilde \lam M_f^2
		\end{equation}
		%
		%
		%
		where the second inequality follows from the fact that $ a^2+b^2\ge 2ab $ for every $ a,b\in \R $.
		Since the triangle inequality and
		the definition of $d_{\ell_0}$ imply that $\|x_j-x_{\ell_0}\| \le \|x_j-x_0^*\| + d_{\ell_0} $, the conclusion of b)
		immediately follows from \eqref{ineq:xj-x0}, the last inequality and the fact that $\tilde \lam \le \lam$.
		
		
		
		
		
		
		c) It is easy to see that \eqref{def:Cj} with $ j=\ell_0 $ implies that $ x_{\ell_0}\in C_{\ell_0+1} $. The conclusion of c) now follows from a) and b) with $ j=\ell_0+1 $.
	\end{proof}
	
	We now make some remarks about Lemma \ref{lem:ell0}.
	First, while the bound in a) is meaningless
	when $M_h=\infty$,
	the one in b) is finite but it requires the mild condition that $C_j$ contain $x_{\ell_0}$.
	Second, the results in a) and b) can be used
	in conjunction with \eqref{ineq:zk-d0} to show that the whole
	RPB sequence $\{x_j:j\ge 0\}$ is bounded.
	Third, the complexity analysis of RPB does
	not make use of the last observation
	but only of the fact stated in Lemma \ref{lem:ell0}(c).

	The following lemma presents a few technical results about the set of
	scalars $\{t_j : j \in B(\ell_0) \}$ and plays an important role in the estimation of the cardinality of the set $B(\ell_0)$.
	
	\begin{lemma}\label{lem:mj}
		Consider the sequence
		$\{t_j\}$ as in \eqref{ineq:hpe1},
		and the sequences $\{m_j\}$ and $\{x_j\}$ as in step 1 of RPB.
		Then, the following statements hold:
		\begin{itemize}
			\item[a)]
			for every $ i, j \in B(\ell_0) $
			such that $  i<j $, we have
			\begin{equation}\label{ineq:lower}
				t_i \ge m_j- m_i\geq \frac{1}{2 \tilde \lam} \sum_{l=i+1}^{j}\left\|x_l-x_{l-1}\right\|^{2};
			\end{equation}
			\item[b)]
			$\{t_j : j \in B(\ell_0)\}$ is non-increasing;
			\item[c)]
			$ t_j \le 2 M_f \min\{\lam M, 2 d_{\ell_0} + 2\tilde \lam M_f  \} $ for every $j \in B(\ell_0)$.
			
		\end{itemize}
	\end{lemma}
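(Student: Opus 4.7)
The plan is to prove (a), (b), and (c) in order, with (b) and (c) following largely as consequences of (a) combined with earlier lemmas. The heart of the argument is establishing a one-step recursion
\[
m_l \ge m_{l-1} + \frac{1}{2\tilde\lam}\|x_l - x_{l-1}\|^2
\]
valid for consecutive indices $l-1, l$ both sitting in $B(\ell_0)$.

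To establish this recursion for part (a), I would apply Lemma~\ref{lem:Aj}(d) at index $l-1$ with test point $u = x_l$, giving
\[
(f_{A_{l-1}} + h)(x_l) + \frac{1}{2\lam}\|x_l - x_{\ell_0}\|^2 \;\ge\; m_{l-1} + \frac{1}{2\tilde\lam}\|x_l - x_{l-1}\|^2.
\]
The bundle update rule \eqref{def:Cj} forces $A_{l-1} \subset C_l$, so Lemma~\ref{lem:Aj}(b) yields $f_{A_{l-1}} \le f_l$, and the left-hand side is therefore at most $\underline{\phi}_l^\lam(x_l) = m_l$. Telescoping from $l = i+1$ to $l = j$ then gives the second inequality in \eqref{ineq:lower}. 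For the first inequality, I combine Lemma~\ref{lem:simple}(b), which asserts $\phi^\lam(\tx_j) \le \phi^\lam(\tx_i)$, with the bound $\phi^\lam(\tx_j) \ge m_j$ from Lemma~\ref{lem:101}, to obtain
\[
t_i = \phi^\lam(\tx_i) - m_i \;\ge\; \phi^\lam(\tx_j) - m_i \;\ge\; m_j - m_i.
\]

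Part (b) then follows immediately by specializing (a) to $i = j-1$, which yields $m_j \ge m_{j-1}$, and combining with Lemma~\ref{lem:simple}(b)'s inequality $\phi^\lam(\tx_j) \le \phi^\lam(\tx_{j-1})$, so that
\[
t_j - t_{j-1} = \bigl[\phi^\lam(\tx_j) - \phi^\lam(\tx_{j-1})\bigr] - \bigl[m_j - m_{j-1}\bigr] \le 0.
\]
For part (c), the monotonicity from (b) reduces the task to bounding $t_{\ell_0+1}$. Applying Lemma~\ref{lem:simple}(c) at $j = \ell_0+1$ gives $t_{\ell_0+1} \le 2M_f \|x_{\ell_0+1} - x_{\ell_0}\|$, and Lemma~\ref{lem:ell0}(c) bounds this displacement by $\min\{\lam M, 2d_{\ell_0} + 2\tilde\lam M_f\}$, yielding the stated estimate.

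The main obstacle is the recursion in (a); the subtle point is that the bundle management rule \eqref{def:Cj} is exactly strong enough to carry $A_{l-1}$ into $C_l$, so that $f_l$ inherits $f_{A_{l-1}}$ as a minorant and the $(\mu + 1/\lam)$-strong-convexity bound of Lemma~\ref{lem:Aj}(d) converts into a monotone relation between consecutive subproblem optimal values. Once this is in place, (b) and (c) are essentially bookkeeping.
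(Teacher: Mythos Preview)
Your proposal is correct and follows essentially the same route as the paper: the one-step recursion $m_l \ge m_{l-1} + \frac{1}{2\tilde\lam}\|x_l-x_{l-1}\|^2$ via Lemma~\ref{lem:Aj}(b) and (d), the first inequality in \eqref{ineq:lower} via Lemma~\ref{lem:101} and Lemma~\ref{lem:simple}(b), and parts (b) and (c) as immediate consequences together with Lemma~\ref{lem:simple}(c) and Lemma~\ref{lem:ell0}(c). The only cosmetic difference is that the paper telescopes by writing the recursion at index $j$ (going to $j+1$) whereas you write it at $l-1$ (going to $l$); the content is identical.
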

	\begin{proof}
		a) It follows from the last two inequalities in \eqref{ineq:prox} with $ u=\tx_j $
		and Lemma \ref{lem:simple}(b) that
		\[
		m_j\le \phi^\lam(\tx_j) \le \phi^\lam(\tx_i),
		\]
		and hence that the first inequality in \eqref{ineq:lower} holds in view of the definition of $ t_i $ in \eqref{ineq:hpe1}. 
		Using the definition of $ m_{j+1} $ in step 1 of RPB, \eqref{basic obs ii}, and
		statements b) and d) with $u=x_{j+1}$ of Lemma \ref{lem:Aj},  we conclude that
		\begin{align*}
			m_{j+1}&=(f_{j+1}+h)(x_{j+1})+\frac{1}{2\lam}\|x_{j+1}-x_{\ell_0}\|^2 \nn \\
			&\ge (f_{A_j}+h)(x_{j+1})+\frac{1}{2\lam}\|x_{j+1}-x_{\ell_0}\|^2
			\ge m_j+\frac{1}{2\tilde \lam}\|x_{j+1}-x_j\|^2. 
		\end{align*}
		The second inequality in \eqref{ineq:lower} now
		follows by adding the above inequality from $ j=i $ to $ j=j-1 $,
		and simplifying the resulting inequality.
		
		b) It immediately follows from \eqref{ineq:lower} that $ \{m_j\}$ is non-decreasing, which together with Lemma \ref{lem:simple}(b) and the definition of $ t_j $ in \eqref{ineq:hpe1}, implies that $ \{t_j\} $ is non-increasing.
		
		c) It follows from Lemma \ref{lem:simple}(c) with $ j=\ell_0+1 $ and  Lemma \ref{lem:ell0}(c) that 
		\[
		t_{\ell_0+1} \le 2 M_f \min\{\lam M, 2d_{\ell_0} + 2\tilde \lam M_f  \}.
		\]  
		The statement now follows from b).
	\end{proof}
	
	
	The following technical result relates $t_j$ and
	the minimum distance $\Delta_j$ between two consecutive iterates among
	$\{x_{\ell_0}, \ldots,x_j\}$, a quantity that plays an
	important role in the complexity analysis of the null iterations.

	\begin{lemma}\label{lem:tj}
		Let
		\begin{equation}\label{def:Delta_j}
			\Delta_j:=\min \left\{\left\|x_i-x_{i-1}\right\|: i \in B(\ell_0), \, i \le j \right\}, \quad \forall j\in B(\ell_0).
		\end{equation}
		Then, the following statements hold:
		\begin{itemize}
			\item[a)]
			for every $ j \in B(\ell_0) $, we have $ t_j \leq 2M_f \Delta_j $;
			\item[b)]
			for every $ j\in B(\ell_0) $ such $j \ge \ell_0+4$, we have
			\[
			\Delta_j^2\le \frac{32 \tilde \lam M_f}{(j-\ell_0)^2}  \sqrt{2 \tilde \lam \lceil (j-\ell_0)/2 \rceil t_{\ell_0+\lfloor (j-\ell_0)/2 \rfloor- 1}}
			\]
			where $\tilde \lam$ is as in \eqref{def:tlam}.
		\end{itemize}
	\end{lemma}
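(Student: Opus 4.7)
My plan is to prove both parts using Lemmas~\ref{lem:simple} and~\ref{lem:mj} already established.

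For part~(a), observe that Lemma~\ref{lem:simple}(c) yields $t_i \le 2M_f\|x_i-x_{i-1}\|$ for every $i \in B(\ell_0)$, while Lemma~\ref{lem:mj}(b) asserts that $\{t_j\}_{j \in B(\ell_0)}$ is non-increasing; hence $t_j \le t_i \le 2M_f\|x_i-x_{i-1}\|$ for every $i \in B(\ell_0)$ with $i \le j$. Minimizing the right-hand side over such $i$ and invoking the definition of $\Delta_j$ gives $t_j \le 2M_f\Delta_j$.

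For part~(b), set $J := j-\ell_0 \ge 4$ and $b := \ell_0 + \lfloor J/2\rfloor - 1$; note $b \in B(\ell_0)$ because $J \ge 4$. My plan is to derive two auxiliary inequalities and combine them. The first follows from Lemma~\ref{lem:mj}(a) together with the pointwise estimate $\|x_l-x_{l-1}\|^2 \ge \Delta_j^2$ for $l \in (\ell_0, j]$:
\[
(j-i)\,\Delta_j^2 \;\le\; \sum_{l=i+1}^{j}\|x_l-x_{l-1}\|^2 \;\le\; 2\tilde\lam\,t_i \qquad \text{for every } i \in B(\ell_0)\text{ with } i<j. \qquad(\dagger)
\]
Applying $(\dagger)$ with the roles $(i,j)$ replaced by $(b,i)$ gives $\Delta_i^2 \le 2\tilde\lam t_b/(i-b)$, which composed with part~(a) yields
\[
t_i \;\le\; 2M_f\,\Delta_i \;\le\; 2M_f\sqrt{\frac{2\tilde\lam\,t_b}{i-b}} \qquad \text{for every } i \in (b, j]\cap B(\ell_0). \qquad(\dagger\dagger)
\]

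Substituting $(\dagger\dagger)$ back into $(\dagger)$, for every $i \in (b,j)\cap B(\ell_0)$ one obtains
\[
\Delta_j^2 \;\le\; \frac{2\tilde\lam\,t_i}{j-i} \;\le\; \frac{4\tilde\lam M_f\sqrt{2\tilde\lam\,t_b}}{(j-i)\sqrt{i-b}}.
\]
The final step is to pick $i$ so that the denominator on the right is of order $J^2/\sqrt{\lceil J/2\rceil}$; since $j-b = \lceil J/2\rceil + 1 \ge 3$ the integer interval $(b,j)$ is non-empty, and a nearly balanced choice of $i$ suffices. The main technical obstacle I anticipate is the careful integer-valued bookkeeping with the floor and ceiling functions (and very possibly a sharper variant of $(\dagger\dagger)$ obtained by averaging part~(a) via Cauchy--Schwarz over a longer subrange, so as to upgrade the pointwise bound on $t_i$ to a bound that already incorporates the $1/\sqrt{\lceil J/2\rceil}$ factor) needed to recover the precise constant~$32$ in the statement.
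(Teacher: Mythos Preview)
Part~(a) is correct and identical to the paper's argument.

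For~(b), your inequalities $(\dagger)$ and $(\dagger\dagger)$ are valid, but the ``pick a single $i$'' strategy does not reach the constant~$32$: optimizing $(j-i)\sqrt{i-b}$ over integers $i\in(b,j)$ yields at best $\tfrac{2}{3\sqrt{3}}(\lceil J/2\rceil+1)^{3/2}$, which for large $J$ is only about $\tfrac{4}{3\sqrt{3}}$ times the target $J^{2}/(8\sqrt{\lceil J/2\rceil})$, so the resulting constant comes out near $24\sqrt{3}\approx 41.6$ rather than~$32$. The paper's proof is precisely the Cauchy--Schwarz averaging you relegate to a parenthetical, and it bypasses $(\dagger\dagger)$ altogether: one combines $(\dagger)$ directly with Lemma~\ref{lem:simple}(c) to obtain $(j-i)\Delta_j^{2}\le 4\tilde\lam M_f\|x_i-x_{i-1}\|$, sums this over the block ${\cal I}=\{\ell_0+\lfloor J/2\rfloor,\ldots,j-1\}$ (so that $\sum_{i\in{\cal I}}(j-i)=\lceil J/2\rceil(\lceil J/2\rceil+1)/2\ge J^{2}/8$), and then applies Cauchy--Schwarz once to $\sum_{i\in{\cal I}}\|x_i-x_{i-1}\|$, controlling the sum of squares via Lemma~\ref{lem:mj}(a) with $(i,j)=(b,j-1)$. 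This produces the constant $32$ with no further bookkeeping. Your scaffolding is right; just promote the parenthetical to the main argument and drop the intermediate step through $\Delta_i$.
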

	\begin{proof}
		%
		a)  Let $j \in B(\ell_0)$ and
		an arbitrary $i \in B(\ell_0)$
		such that $ i\le j $ be given.
		Using Lemma \ref{lem:mj}(b) and Lemma \ref{lem:simple}(c) with $ j=i $, we conclude that
		\[
		t_j\le t_i\le 2 M_f \|x_{i}-x_{i-1}\|.
		\]
		The statement now follows from the definition of $ \Delta_j  $ in \eqref{def:Delta_j} and the fact that the
		above inequality holds for every
		$i \in B(\ell_0)$
		such that $ i\le j $.
		
		

		b) Let $j \in B(\ell_0)$ such that $j\ge \ell_0+4$ be given.
		For any
		$i \in B(\ell_0)$
		such that $ i < j $, it follows from
		Lemma \ref{lem:mj}(a), Lemma \ref{lem:simple}(c) with $ j=i $,
		and the definition of $ \Delta_j  $ in \eqref{def:Delta_j}, that
		\[
		\frac{1}{2 \tilde \lam}(j-i) \Delta_j^{2} \le \frac{1}{2 \tilde \lam} \sum_{l=i+1}^{j}\left\|x_l-x_{l-1}\right\|^{2} \leq t_i \le 2 M_f\left\|x_i-x_{i-1}\right\|.
		\]
		Since the set of indices ${\cal I} := \{\ell_0+\lfloor (j-\ell_0)/2 \rfloor, \ldots,j-1\}$ is clearly in
		$\{i \in B(\ell_0): i < j\}$ and $|{\cal I}| = \lceil (j-\ell_0)/2 \rceil $, we conclude
		by adding the above inequality as $i$ varies in ${\cal I}$ that
		\begin{equation}\label{ineq:sum}
			\frac{(j-\ell_0)^2}{16 \tilde \lam} \Delta_j^2
			\le \frac{\lceil (j-\ell_0)/2 \rceil(\lceil (j-\ell_0)/2 \rceil +1)}{4 \tilde \lam} \Delta_j^{2}
			\leq 2 M_f\sum_{i \in {\cal I} }
			\left\|x_i-x_{i-1}\right\|.
		\end{equation}
		On the other hand, using the fact that
		$j\ge \ell_0+4$ implies that $\ell_0+\lfloor (j-\ell_0)/2 \rfloor - 1\ge \ell_0+1$,
		the Cauchy-Schwarz inequality, and Lemma \ref{lem:mj}(a) with $(i,j)=(\ell_0+\lfloor (j-\ell_0)/2 \rfloor - 1,j-1)$, we conclude that
		\[
		\sum_{i\in{\cal I}}
		\left\|x_i-x_{i-1}\right\|
		\le \left \lceil \frac{j-\ell_0}{2} \right \rceil^{1/2}
		\left(  \sum_{i\in{\cal I}}\left\|x_i-x_{i-1}\right\|^{2}\right) ^{1/2} 
		\le \sqrt{2 \tilde \lam \lceil (j-\ell_0)/2 \rceil 
			t_{\ell_0+\lfloor (j-\ell_0)/2 \rfloor - 1}}.
		\]
		Statement (b) now follows by plugging the above inequality into \eqref{ineq:sum} and rearranging the resulting inequality.
	\end{proof}
	
	The following proposition shows that the sequence $ \{t_j: j\in B(\ell_0) \}$ converges to zero with an ${\cal O}(1/j)$ convergence rate.
	
	\begin{proposition}\label{prop:rate}
		For every $j \in B(\ell_0)$,
		we have
		\begin{equation}\label{ineq:claim}
			t_j\le \frac{ \min  \left\lbrace(16)^{4/3} \lam M M_f, (16)^{4/3} \tilde \lam M_f^2 + 20 M_f d_{\ell_0}\right\rbrace }{j-\ell_0}.
		\end{equation}
	\end{proposition}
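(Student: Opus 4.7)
The plan is to prove, by strong induction on $k := j - \ell_0$, the two separate bounds $t_j \le A^{(1)}/k$ and $t_j \le A^{(2)}/k$ with $A^{(1)} := (16)^{4/3}\lam M M_f$ and $A^{(2)} := (16)^{4/3}\tilde\lam M_f^2 + 20 M_f d_{\ell_0}$, and then take the minimum to conclude \eqref{ineq:claim}. For the base cases $k \in \{1,\ldots,5\}$, I would invoke Lemma \ref{lem:mj}(c) to obtain $t_j \le 2M_f \min\{\lam M, 2d_{\ell_0} + 2\tilde\lam M_f\}$ and verify that $A^{(i)}/5$ dominates the respective right-hand side; the resulting numerical checks $(16)^{4/3}/5 \ge 2$ and $(16)^{4/3}/5 \ge 4$ are both satisfied since $(16)^{4/3} \approx 40.3$.

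For the inductive step $k \ge 6$, I would combine Lemmas \ref{lem:tj}(a) and \ref{lem:tj}(b), setting $j'_k := \ell_0 + \lfloor k/2\rfloor - 1$, to obtain the key self-referential inequality
\[
t_j^2 \;\le\; \frac{128\,\tilde \lam M_f^3}{k^2}\sqrt{2\tilde \lam \lceil k/2 \rceil \, t_{j'_k}}.
\]
Inserting the induction hypothesis $t_{j'_k} \le A^{(i)}/(\lfloor k/2\rfloor - 1)$ and using the elementary estimate $\lceil k/2 \rceil/(\lfloor k/2\rfloor - 1) \le 2$, valid for all $k \ge 6$ (with equality at $k = 7$), reduces the required bound $t_j \le A^{(i)}/k$ to $(A^{(i)})^{3/2} \ge 256\, \tilde \lam^{3/2} M_f^3$, i.e., $A^{(i)} \ge (256)^{2/3}\tilde\lam M_f^2 = (16)^{4/3}\tilde\lam M_f^2$. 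Both candidates meet this: $A^{(2)} \ge (16)^{4/3}\tilde\lam M_f^2$ by inspection, while $A^{(1)} \ge (16)^{4/3}\tilde\lam M_f^2$ follows from $\lam \ge \tilde\lam$ (by \eqref{def:tlam}) and $M \ge M_f$.

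The main subtle point will be the calibration of the base-case threshold $k = 5$ together with the constants $(16)^{4/3}$ and $20$: the choice $k \le 5$ for the base case is exactly what allows the coefficient $20$ to absorb the trivial bound $5 \cdot 4 M_f d_{\ell_0}$, while pushing the induction to $k \ge 6$ ensures that the worst-case value of $\lceil k/2 \rceil / (\lfloor k/2\rfloor - 1)$ is the attainable value $2$ (reached exactly at $k = 7$); this value in turn yields the constant $(16)^{4/3}$ via the identity $256^{2/3} = (2^8)^{2/3} = 2^{16/3} = (2^4)^{4/3}$. All the remaining verifications — the elementary check that the ratio is $\le 2$ for $k \ge 6$ by case analysis on the parity of $k$, and the plain arithmetic $(16)^{4/3} > 20 > 10$ — are routine.
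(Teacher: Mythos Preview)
Your proposal is correct and follows essentially the same approach as the paper's proof: induction with base cases $k\le 5$ handled via Lemma~\ref{lem:mj}(c), and the inductive step $k\ge 6$ handled by combining Lemma~\ref{lem:tj}(a) and (b), the ratio bound $\lceil k/2\rceil/(\lfloor k/2\rfloor-1)\le 2$, and the closure condition $A^{(i)}\ge (16)^{4/3}\tilde\lam M_f^2$. The only cosmetic difference is that the paper works directly with $a:=\min\{A^{(1)},A^{(2)}\}$ rather than proving the two bounds separately, which leads to the same computations.
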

	\begin{proof}
		The proof of the proposition is by induction on $ j\in B(\ell_0) $.
		First note that \eqref{ineq:claim} holds for every
		$j \in B(\ell_0)$ such that $j \le \ell_0+5$ in view of Lemma \ref{lem:mj}(c).
		Now, let $j \in B(\ell_0)$ be such that $  j\ge \ell_0+6 $ and assume
		for the induction argument that \eqref{ineq:claim} holds for the indices $\ell_0+1,\ldots,j-1$.
		Also, define $ a:=\min\{ (16)^{4/3} \lam M M_f, (16)^{4/3} \tilde \lam M_f^2 + 20 M_f d_{\ell_0}\}$.
		Since $\ell_0 + 1 \le \ell_0+\lfloor (j-\ell_0)/2\rfloor-1 \le j-1$ when $j \ge \ell_0+6$, we then conclude that
		\[
		\left \lceil (j-\ell_0)/2 \right \rceil t_{\ell_0+\lfloor (j-\ell_0)/2\rfloor-1} \le \frac{\lceil (j-\ell_0)/2 \rceil}{\lfloor (j-\ell_0)/2 \rfloor -1} a
		\le 2a
		\]
		where the last inequality is due to the assumption that $j \ge \ell_0+ 6$ and the definition of $a$.
		The last conclusion together with
		Lemma \ref{lem:tj}(b) then implies that
		\[
		\Delta_j^2\le \frac{64\tilde \lam M_f\sqrt{\tilde \lam a}}{(j-\ell_0)^2}.
		\]
		Now, using Lemma \ref{lem:tj}(a) and
		the last inequality, we then conclude that
		\[
		t_j \le 2 M_f \Delta_j\le \frac{16M_f^{3/2}\tilde \lam^{3/4}a^{1/4}}{j-\ell_0}
		\le \frac{a}{j-\ell_0}
		\]
		where the last inequality follows from the
		definition of $a$ and the fact that $\lam \ge \tilde \lam$ (see \eqref{def:tlam}).
		We have thus shown that the conclusion of the proposition holds.
	\end{proof}
	
	We are now ready to state the main result of this subsection.
	\begin{proposition}\label{prop:null-strong}
		Let $(x_0,\lam,\delta) \in \dom h \times \R_{++} \times \R_{++}$ be given
		and assume that
		(A1)-(A4) hold and
		$j=\ell_0$ is a serious iteration index of RPB$(x_0,\lam,\delta)$.
		Then, the next serious iteration index $j=\ell_1>\ell_0$ exists and satisfies
		\[
		\ell_1 - \ell_0\le \frac{ \min  \left\lbrace(16)^{4/3} \lam M M_f, (16)^{4/3} \tilde \lam M_f^2 + 20 M_f d_{\ell_0}\right\rbrace }{\delta} + 1
		\]
		where $M_f$ is as in (A3), and
		$M$ and $d_{\ell_0}$ are as in Lemma \ref{lem:ell0}.
	\end{proposition}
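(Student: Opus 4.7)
The plan is to extract the proposition directly from Proposition \ref{prop:rate}, since the latter already gives the $\mathcal{O}(1/(j-\ell_0))$ decay rate of $t_j$ that we need. The two things left to establish are (i) that the next serious iteration index $\ell_1$ actually exists (i.e., RPB does not get stuck in null iterations forever) and (ii) the quantitative bound on $\ell_1 - \ell_0$.

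For existence, I would argue by contradiction. Suppose no serious iteration occurs after $\ell_0$. Then $B(\ell_0) = \{\ell_0+1, \ell_0+2, \ldots\}$ is infinite and, by step 2 of RPB, $t_j > \delta$ for every $j > \ell_0$. However, Proposition \ref{prop:rate} gives
\[
t_j \le \frac{\min\{(16)^{4/3}\lam M M_f,\ (16)^{4/3}\tilde\lam M_f^2 + 20 M_f d_{\ell_0}\}}{j-\ell_0} \to 0
\]
as $j \to \infty$, contradicting $t_j > \delta$. Hence $\ell_1$ exists.

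For the quantitative bound, I would split into two cases. If $\ell_1 = \ell_0+1$, the claimed inequality
\[
\ell_1 - \ell_0 \le \frac{\min\{(16)^{4/3}\lam M M_f,\ (16)^{4/3}\tilde\lam M_f^2 + 20 M_f d_{\ell_0}\}}{\delta} + 1
\]
holds trivially since the right-hand side is $\ge 1$. Otherwise $\ell_1 \ge \ell_0+2$, so the index $j := \ell_1 - 1$ lies in $B(\ell_0)$ and is a null iteration index, meaning $t_{\ell_1-1} > \delta$ by the failure of \eqref{ineq:hpe1} at step 2. Applying Proposition \ref{prop:rate} with $j = \ell_1 - 1$ yields
\[
\delta < t_{\ell_1-1} \le \frac{\min\{(16)^{4/3}\lam M M_f,\ (16)^{4/3}\tilde\lam M_f^2 + 20 M_f d_{\ell_0}\}}{\ell_1-1-\ell_0},
\]
from which the stated bound follows by rearranging.

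There is essentially no obstacle here: Proposition \ref{prop:rate} does all the heavy lifting, and the only subtlety is remembering to handle the edge case $\ell_1 = \ell_0+1$ separately so that we can legitimately apply the rate estimate at the index $\ell_1-1 \in B(\ell_0)$. The existence argument relies only on the fact that the convergence rate in Proposition \ref{prop:rate} is expressed uniformly in $j$ with a constant depending only on $\lam, M, M_f, d_{\ell_0}, \tilde\lam$, which do not grow as $j$ increases past $\ell_0$.
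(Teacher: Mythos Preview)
Your proposal is correct and follows essentially the same approach as the paper: both handle the edge case $\ell_1=\ell_0+1$ separately, and in the main case apply Proposition~\ref{prop:rate} at $j=\ell_1-1$ (a null index, so $t_{\ell_1-1}>\delta$) and rearrange. The only difference is that you include an explicit existence argument for $\ell_1$ by contradiction via the decay $t_j\to 0$, whereas the paper's proof leaves existence implicit (it follows from the same estimate).
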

	
	\begin{proof}
		If $\ell_1=\ell_0+1$, then \eqref{cmplx:null} is obviously true.
		Assume then $\ell_1>\ell_0+1$.
		This clearly implies that $\ell_1-1 \in B(\ell_0)$,
		and hence is a null iteration index.
		Using this observation and the fact
		that an iteration index $ j $ is null if and only if \eqref{ineq:hpe1} does not hold,
		we conclude that $ t_{\ell_1-1}>\delta $.
		This conclusion,
		the fact that $ \ell_1-1\in B(\ell_0) $, and
		Proposition \ref{prop:rate} with $j=\ell_1-1$,
		then imply that
		\[
		\delta<t_{\ell_1-1} \le \frac{ \min  \left\lbrace(16)^{4/3} \lam M M_f, (16)^{4/3} \tilde \lam M_f^2 + 20 M_f d_{\ell_0}\right\rbrace }{\ell_1-1-\ell_0},
		\]
		from which the conclusion of the proposition immediately
		follows.
	\end{proof}

	%
	
	\subsection{Relationship between RPB and CS-CS}\label{subsec:CS}

	We start by making a few trivial remarks
	about the  relationship between
	CS-CS and RPB.
	First, if they use the same stepsize $\lam$,
	then they both generate the same first iterate $x_1$.
	Second, if $ d_0\le \bar \varepsilon/(4M_f) $, then it follows from Proposition \ref{prop:sub-new} that the CS-CS method, and hence RPB,
	with $ \lam=\bar \varepsilon/(4M_f^2) $ finds a $ \bar \varepsilon $-solution of \eqref{eq:ProbIntro} in one iteration.
	
	
	The following result describes a less trivial
	relationship between
	RPB and the CS-CS method.
	More specifically, 
	it shows that the first remark in the previous
	paragraph can be extended to the other iterates
	as well as long as $\lam$ is sufficiently small.
	
	
	
	\begin{proposition}\label{prop:reduction}
		Let $(x_0,\lam,\delta)\in \dom h \times \R_{++} \times \R_{++}$ satisfying $ \lam \le \delta/(2MM_f)$ (and hence $M_h < \infty$) be given, where $M$ is as in Lemma \ref{lem:ell0}. Then, every iteration index of RPB$(x_0,\lam,\delta)$ is a serious one.
		As a consequence, if the set $ C_{j+1}$, which necessarily contains $x_j$,
		is always set to be  $\{x_j\} $ in step 2.a
		of RPB, then RPB$(x_0,\lam,\delta)$ reduces to  CS-CS$(x_0,\lam)$.
	\end{proposition}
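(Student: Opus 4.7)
The plan is to prove by induction on $j \ge 0$ that every iteration index of RPB$(x_0,\lam,\delta)$ is a serious one. The base case $j=0$ is serious by the convention stated immediately after the description of RPB. For the inductive step, suppose iterations $0,1,\ldots,j-1$ are all serious. Then step 2.a has been executed at index $j-1$, so $x_{j-1}^c = x_{j-1}$, which means the most recent serious iteration index $\ell_0$ equals $j-1$. In particular $j = \ell_0 + 1 \in B(\ell_0)$.

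Applying Lemma \ref{lem:ell0}(c) with this $\ell_0$, I obtain $\|x_j - x_{j-1}\| = \|x_{\ell_0+1}-x_{\ell_0}\| \le \lam M$. Combining this with Lemma \ref{lem:simple}(c) yields
\[
t_j \;\le\; 2M_f\,\|x_j-x_{j-1}\| \;\le\; 2M_f\lam M \;\le\; \delta,
\]
where the last inequality uses the hypothesis $\lam \le \delta/(2MM_f)$. By the decision rule \eqref{ineq:hpe1} in step 2 of RPB, this forces iteration $j$ to be serious, completing the induction.

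For the consequence, once every iteration is serious and $C_{j+1}$ is always chosen as $\{x_j\}$ in step 2.a, an immediate induction gives $C_j = \{x_{j-1}\}$ for every $j \ge 1$. Hence \eqref{def:fj} collapses to the single affine minorant $f_j(\cdot) = f(x_{j-1}) + \inner{f'(x_{j-1})}{\cdot - x_{j-1}}$, and since $x_{j-1}^c = x_{j-1}$, the prox bundle subproblem \eqref{def:xj} coincides exactly with the CS-CS update \eqref{eq:sub}. Thus RPB$(x_0,\lam,\delta)$ and CS-CS$(x_0,\lam)$ generate identical iterates.

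The argument is essentially a direct combination of two earlier lemmas, so there is no substantive obstacle; the only thing to be careful about is the structural point that establishing $\ell_0 = j-1$ in the inductive step requires having already shown the previous iteration was serious, which is exactly what the induction hypothesis supplies. I would also note that the hypothesis $\lam \le \delta/(2MM_f)$ already tacitly forces $M_h<\infty$ (otherwise $M=\infty$ and no such $\lam$ exists), so the bound $\|x_{\ell_0+1}-x_{\ell_0}\|\le \lam M$ from Lemma \ref{lem:ell0}(c) is meaningful, as the statement of the proposition observes.
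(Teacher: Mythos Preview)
Your proof is correct and follows essentially the same approach as the paper's. The only cosmetic difference is that the paper invokes Lemma~\ref{lem:mj}(c) (which itself is proved by combining Lemma~\ref{lem:simple}(c) with $j=\ell_0+1$ and Lemma~\ref{lem:ell0}(c)) and leaves the induction implicit, whereas you cite the two underlying lemmas directly and spell out the induction explicitly.
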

	
	\begin{proof}
		Using Lemma \ref{lem:mj}(c) and the assumption that $ \lam\le\delta/(2M M_f) $, we have $ t_j\le 2\lam M M_f \le \delta $ for every $ j\in B(\ell_0) $.
		Hence, we have $ t_{\ell_0+1}\le \delta $, and in view of \eqref{ineq:hpe1}, we conclude that every iteration index $j$ is serious.
		We now show that, under the assumption of the proposition, RPB$(x_0,\lam,\delta)$ reduces to CS-CS$(x_0,\lam)$. 
		Since every iteration index is a serious one, using step 2.a of RPB, the definition of $ f_j $ in \eqref{def:fj},
		and the assumption of this proposition that $ C_{j+1}=\{x_j\} $, we conclude that $ x_j^c = x_j $
		and 
		$ f_j(\cdot) = f(x_{j-1}) + \inner{f'(x_{j-1})}{\cdot-x_{j-1}}$ 
		for every $j \ge 1$.
		In view of  this observation and \eqref{eq:sub}, it is now easy to see that RPB$(x_0,\lam,\delta)$ reduces to CS-CS$(x_0,\lam)$.
	\end{proof}

	

	\section{Proof of Theorem \ref{thm:suboptimal}}\label{sec:serious}
	
	This section provides the proof of Theorem~\ref{thm:suboptimal}, which describes a general iteration-complexity for
	RPB to find a $ \bar \varepsilon $-solution of \eqref{eq:ProbIntro}.

	
	We start by introducing
	some notation and definitions.
	Consider the sequences
	$\{f_j\}$, $\{x_j\}$ and $\{\tx_j\}$ as in \eqref{def:fj}, \eqref{def:xj} and \eqref{def:txj}, respectively, and let $\{j_k : k \ge 0\}$
	denote the sequence of serious iteration indices generated by RPB (and hence $j_0=0$). Moreover,  define $ z_0:=x_0 $, $ \tz_0:=x_0 $ and, for
	every $k \ge 1$,
	\begin{align}
		z_k &:= x_{j_k}, \quad \tz_k := \tx_{j_k}, \quad \tilde f_k:=f_{j_k}. \label{not}
	\end{align}
	Using the definitions of $ \hat z_k $ and $ \tz_k $ in \eqref{def:hat zk} and \eqref{not}, respectively, we have
	\begin{equation}\label{eq:hatz}
		\hat z_k \in \Argmin \left\lbrace \phi(z)  : z \in \{\tz_0,\tz_1,\ldots,\tz_k\} \right\rbrace \quad \forall k \ge 1.
	\end{equation}
	
	The following lemma provides some technical results that will be used in the proof of Theorem \ref{thm:suboptimal}.
	\begin{lemma}\label{lem:iterate}
		The following statements
		about RPB$(x_0,\lam,\delta)$
		hold for every $ k\ge 1 $:
		\begin{itemize}
			\item[a)] $ z_{k-1} =x^c_j $,  for every $ j = j_{k-1}, \ldots, j_k-1$;
			\item[b)] $ z_k =\argmin \left\lbrace (\tilde f_k+h)(u) +\|u- z_{k-1} \|^2/(2\lam): u\in\R^n \right\rbrace $;
			\item[c)] $ \delta_k + \|\tz_k-z_{k-1}\|^2/(2\lam) \le \delta $ where $ \delta_k:=\phi(\tz_k) - (\tilde f_k+h)(z_k)-\|z_k-z_{k-1}\|^2/(2\lam) $;
			\item[d)] $ \phi(\tz_k) - \phi(z) + (1+\lam \mu)\|z_{k} - z\|^2/(2\lam) \le \delta_k +\|z_{k-1}-z\|^2/(2\lam) $;
			\item[e)] $ \|\tz_k-z_k\|^2 \le 2\lam \delta $.
		\end{itemize}
	\end{lemma}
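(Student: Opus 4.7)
My plan is to address the five statements in order, since each one relies on the previous. For (a), I would use induction along the index $j$ running from $j_{k-1}$ to $j_k-1$: the base case $x_{j_{k-1}}^c = x_{j_{k-1}} = z_{k-1}$ is the update in step 2.a at the serious iteration $j_{k-1}$, and for $j \in \{j_{k-1}+1,\dots,j_k-1\}$ (null iterations by definition of $j_k$) the update rule in step 2.b gives $x_j^c = x_{j-1}^c$, which carries the value forward. For (b), I would plug $j=j_k$ into the definition \eqref{def:xj} of $x_j$, recall that $\tilde f_k = f_{j_k}$ and $z_k = x_{j_k}$ by \eqref{not}, and replace $x_{j_k-1}^c$ by $z_{k-1}$ using (a).

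For (c), the key observation is that the serious-iteration test \eqref{ineq:hpe1} at $j = j_k$ reads $t_{j_k} = \phi_{j_k}^\lam(\tx_{j_k}) - m_{j_k} \le \delta$. Using (a) together with \eqref{def:philam} gives $\phi_{j_k}^\lam(\tx_{j_k}) = \phi(\tz_k) + \|\tz_k - z_{k-1}\|^2/(2\lam)$, while the definition of $m_{j_k}$ in step 1 of RPB, together with \eqref{def:xj}, gives $m_{j_k} = (\tilde f_k + h)(z_k) + \|z_k - z_{k-1}\|^2/(2\lam)$. Subtracting and matching against the definition of $\delta_k$ yields exactly $\delta_k + \|\tz_k - z_{k-1}\|^2/(2\lam) = t_{j_k} \le \delta$.

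For (d), I would use that the objective in (b) is $(\mu + 1/\lam)$-strongly convex (since $h$ is $\mu$-convex by (A3), $\tilde f_k$ is convex, and the prox term is $(1/\lam)$-strongly convex), so the minimizer characterization from (b) implies, for every $z \in \R^n$,
\[
(\tilde f_k+h)(z_k) + \frac{1}{2\lam}\|z_k - z_{k-1}\|^2 + \frac{1+\lam\mu}{2\lam}\|z-z_k\|^2 \le (\tilde f_k+h)(z) + \frac{1}{2\lam}\|z - z_{k-1}\|^2.
\]
I would then bound $(\tilde f_k+h)(z) \le \phi(z)$ using $\tilde f_k = f_{j_k} \le f$ from Lemma \ref{lem:101}, rewrite the left-hand side using the definition of $\delta_k$ to replace $(\tilde f_k+h)(z_k) + \|z_k-z_{k-1}\|^2/(2\lam)$ by $\phi(\tz_k) - \delta_k$, and rearrange. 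For (e), I would simply specialize (d) to $z = \tz_k$: the $\phi$-terms cancel, and combining with the bound $\delta_k + \|\tz_k - z_{k-1}\|^2/(2\lam) \le \delta$ from (c) yields $(1+\lam\mu)\|z_k-\tz_k\|^2/(2\lam) \le \delta$, which gives the claim since $1+\lam\mu \ge 1$.

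I do not anticipate a genuine obstacle here; the content of the lemma is essentially bookkeeping translating the stopping test \eqref{ineq:hpe1} into the language of the subsequence indexed by serious iterations, with the one substantive input being strong convexity of the bundle subproblem. The only place where care is needed is making sure (a) is applied before identifying the prox-center of the subproblem \eqref{def:xj} at $j = j_k$ with $z_{k-1}$, and consistently using $\tilde f_k \le f$ in (d) and (e) so that the inequality flips in the right direction.
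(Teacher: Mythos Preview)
Your proposal is correct and follows essentially the same route as the paper: parts (a)--(c) are bookkeeping from the algorithm description and \eqref{ineq:hpe1}, part (d) uses the $(\mu+1/\lam)$-strong convexity of the bundle subproblem together with $\tilde f_k \le f$, and part (e) specializes (d) to $z=\tz_k$ and combines with (c). The only cosmetic difference is that the paper cites Theorem~5.25(b) of \cite{beck2017first} for the strong-convexity inequality in (d), whereas you invoke it directly.
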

	
	\begin{proof}
		a) This statement follows from the definition of $ z_k $ in \eqref{not} and the prox-center update policy in step 2 of RPB.
		
		b) This statement follows from \eqref{def:xj} with $ j=j_k $, \eqref{not} and a).
		
		c) Using the fact that $ m_j=\underline{\phi}_j^\lam(x_j) $ (see step 1 of RPB) with $ j=j_k $ and a), we have
		\[
		m_{j_k}=(\tilde f_k+h)(z_k) + \frac{1}{2\lam}\|z_k-z_{k-1}\|^2.
		\]
		Relation \eqref{def:philam} with $ j=j_k $, \eqref{not} and a) imply that
		\[
		\phi^\lam_{j_k}(\tx_{j_k}) = \phi(\tz_k) + \frac{1}{2\lam}\|\tz_k-z_{k-1}\|^2.
		\]
		Since $ j_k $ is a serious iteration index, \eqref{ineq:hpe1} holds with $ j=j_k $. Using this conclusion, the above two identities and the definition of $ \delta_k $ in the statement, we conclude that
		\[
		\delta_k + \frac{1}{2\lam} \|\tz_k-z_{k-1}\|^2 = \phi^\lam_{j_k}(\tx_{j_k}) - m_{j_k} \le \delta.
		\]
		
		d) Noting that the objective function in b) is $ (\mu + 1/\lam)$-strongly convex,
		and using b) and Theorem 5.25(b) of \cite{beck2017first}
		with $f=\tilde f_k+h+\|\cdot-z_{k-1}\|^2/(2\lam)$, $x^*=z_k$ and $ \sigma=\mu + 1/\lam $, 
		we have for every $k \ge 1$ and $ z\in \R^n $,
		\[
		(\tilde f_k+h)(z_k) + \frac1{2\lam}\|z_k-z_{k-1}\|^2 
		\le (\tilde f_k + h)(z) + \frac1{2\lam}\|z-z_{k-1}\|^2
		- \frac1{2}\left( \mu+\frac{1}{\lam}\right) \|z-z_{k}\|^2.
		\]
		The above inequality, the fact that $\phi \ge \tilde f_k+h $ and the definition of $ \delta_k $ in c) imply that
		\begin{align*}
			\phi(\tz_k) &- \phi(z) + \frac1{2}\left( \mu+\frac{1}{\lam}\right)\|z_{k} - z\|^2 
			\le \phi(\tz_k) - (\tilde f_k+h)(z) + \frac1{2}\left( \mu+\frac{1}{\lam}\right)\|z_{k} - z\|^2 \\
			& \le \phi(\tz_k) - (\tilde f_k+h)(z_k) -  \frac1{2\lam}\|z_k-z_{k-1}\|^2 +\frac1{2\lam}\|z_{k-1}-z\|^2  =\delta_k +\frac1{2\lam}\|z_{k-1}-z\|^2. 
		\end{align*}
		
		e) This statement follows from d) with $ z=\tz_k $ and c).
	\end{proof}

	We are now ready to prove Theorem \ref{thm:suboptimal}.
	
	\noindent
	{\bf Proof of Theorem \ref{thm:suboptimal}:}
	Recall that Theorem~\ref{thm:suboptimal} deals with RPB$(x_0,\lam,\delta)$ with $\delta = \bar \varepsilon/2$.
	Using Lemma~\ref{lem:iterate}(d) with $ z=x_0^* $, and the fact that $ \delta_k \le \delta $ (see Lemma \ref{lem:iterate}(c)), we have
	\begin{equation}\label{ineq:observation}
		\phi(\tz_k) - \phi^* \le \frac1{2\lam}\|z_{k-1}-x_0^*\|^2 - \frac{1+\lam \mu}{2\lam}\|z_{k} - x_0^*\|^2 + \delta \quad \forall k \ge 1.
	\end{equation}
	Since \eqref{ineq:observation} satisfies \eqref{eq:easyrecur} with $ \eta_k=\phi(\tz_k) - \phi^*$, $\alpha_k=\|z_{k}-x_0^*\|^2/(2\lam)$, $ \theta=1+\lam\mu $, and $ \delta= \bar \varepsilon/2 $,
	it follows from Lemma \ref{lm:easyrecur},
	the fact that $ \alpha_0=d_0^2/(2\lam) $,
	and relation \eqref{eq:hatz}, that
	every $k\ge 1$ such that
	$ \phi(\hat z_k)-\phi^*> \bar \varepsilon $
	satisfies
	\begin{equation}\label{ineq:k}
		k< \min \left\lbrace  \frac{d_0^2}{\lam \bar \varepsilon},  \frac{1+\lam \mu}{\lam\mu} \log\left( \frac{\mu d_0^2}{\bar \varepsilon} + 1\right) \right\rbrace
	\end{equation}
	and
	\begin{equation}\label{ineq:zk-d0}
		\|z_k-x_0^*\| \le
		\sqrt{d_0^2 + 2\lam k \delta}
		=\sqrt{d_0^2 + \lam k \bar \varepsilon} \le \sqrt{2} d_0
	\end{equation}
	where the identity is due to the fact that $ \delta= \bar \varepsilon/2 $ and the last inequality is due to \eqref{ineq:k}.
	Clearly, the first conclusion above (i.e., \eqref{ineq:k})
	and the definition of $\tilde \lam$ in \eqref{def:tlam}
	imply a). Moreover, 
	the second one (i.e., \eqref{ineq:zk-d0}) together with
	the first identity in \eqref{not}
	and 
	Proposition \ref{prop:null-strong} with $ \delta=\bar \varepsilon/2 $
	imply that b) holds.
	Finally, c) follows immediately from a) and b).
	\QEDA

	\section{Complexity results for other termination criteria}\label{sec:other}
	
	This section contains two subsections.
	The first one describes two
	alternative notions of approximate solutions for problem \eqref{eq:ProbIntro}.
	The second one states iteration-complexity results 
	with respect to these approximate solutions.
	For simplicity, we assume in this section that $ \mu=0 $ and $ M_h $ is finite.
	
	\subsection{Other termination criteria}\label{subsec:notion}
	
	Usually, algorithms for solving \eqref{eq:ProbIntro} naturally
	generate pairs $(x,\eta)$ satisfying
	the inclusion $0 \in \partial_{\eta} \phi(x)$,
	or equivalently, the inequality $\phi(x) - \phi^* \le \eta$,
	in all of 
	their iterations
	(see the discussion in the second and third paragraphs following Definition \ref{def:approximate} below).
	For the purpose of our discussion in this section,
	we refer to such a pair $(x,\eta)$ as a
	$\phi$-compatible pair.
	Moreover, a $\phi$-compatible pair $(x,\eta)$
	is called a $\bar \varepsilon$-solution pair of \eqref{eq:ProbIntro} if its residual $\eta$
	satisfies $\eta \le \bar \varepsilon$.
	We now make a few remarks about a given $\phi$-compatible pair $(x,\eta)$.
	First, if 
	$\eta \le \bar \varepsilon$, then
	$x$ is a $\bar \varepsilon$-solution.
	Second, checking whether $\eta \le \bar \varepsilon$ is satisfied
	is much
	simpler than checking whether \eqref{def:suboptimal} holds.
	Third,
	it is possible for $(x,\eta)$ to
	satisfy the inequalities
	\eqref{def:suboptimal}
	and $\eta > \bar \varepsilon$,
	which means that
	$x$ is already a desired
	$\bar \varepsilon$-solution but the certificate (or residual)
	$\eta$ is not suitable
	to detect this fact. 
	
	More generally, the following definition of
	an approximate solution triple of \eqref{eq:ProbIntro}
	will be useful.
	
	\begin{definition}\label{def:approximate}
		A triple $(x,v,\eta)$ is called
		$\phi$-compatible if it satisfies the
		inclusion $v \in \partial_\eta \phi(x)$.
		For a given tolerance pair $(\hat \rho, \hat \varepsilon)$,
		a $\phi$-compatible triple $(x,v,\eta)$
		is called a
		$(\hat \rho,\hat\varepsilon)$-solution triple of \eqref{eq:ProbIntro} if it satisfies
		$	\|v\|\le \hat \rho$ and
		$ \eta \le \hat \varepsilon$.
	\end{definition}
	
	At this point, it is interesting to illustrate
	the notion of a $\phi$-compatible triple in the specific setting of \eqref{eq:ProbIntro} where $h(\cdot)=I_K(\cdot)$
	and $K$ is a nonempty closed convex cone. In such setting,
	$(x,v,\eta)$ is $\phi$-compatible if and only if there exists
	$s \in \partial f (x) $
	such that
	$s-v \in K^*$ and $\inner{x}{s-v} \le \eta$ (see Lemma 3.3 in \cite{monteiro2010complexity}).
	Clearly, when $v=0$ and $\eta=0$,
	the latter condition
	implies that $x$ is an
	optimal solution of \eqref{eq:ProbIntro}.
	In general, $v$ is
	a perturbation made on $s$
	to obtain a dual feasible
	point $s-v \in K^*$ and
	$\eta$ is an upper bound on
	the
	complementarity gap
	of the primal-dual feasible
	pair $(x,s-v)$
	(see Proposition 3.4 in \cite{monteiro2010complexity}).
	This specific setting shows
	that the two residuals
	$v$ and $\eta$
	have their own natural meanings. This same phenomenon
	can also be observed in
	the context of other
	constrained convex optimization problems and
	monotone variational inequalities
	(see for example \cite{monteiro2010complexity,monteiro2011complexity}).
	
	We now make some comments about the use of
	the above definition as a natural algorithmic stopping
	criterion.
	Many algorithms,
	including the one considered in this paper,
	are able to naturally generate
	a sequence of $\phi$-compatible triples
	$\{(\hat z_k,\hat v_k,\hat \varepsilon_k)\}$ for which
	the residual pair
	$(\hat v_k,\hat \varepsilon_k)$
	can be made arbitrarily small (see for example Proposition \ref{prop:approximate} below).
	As a consequence,
	some $(\hat z_k,\hat v_k,\hat \varepsilon_k)$ will eventually become a $(\hat \rho,\hat\varepsilon)$-solution triple of \eqref{eq:ProbIntro} and verifying this
	simply amounts to checking whether
	the two inequalities $\|\hat v_k\| \le \hat \rho$
	and $\hat \varepsilon_k \le \hat \varepsilon$
	hold.
	
	It is natural to wonder whether
	these same algorithms can also produce
	a sequence as above but with
	$\hat v_k=0$ for every $k \ge 0$. It turns out that, when
	$\dom h$ is unbounded, such a sequence is generally
	difficult or impossible to obtain.
	However, when $\dom h$ is bounded, we can easily construct such a sequence using the one as in the
	previous paragraph.
	Indeed, let $S$ be a compact convex set containing $\dom h$ and,
	for every $k$, define
	\begin{equation}\label{def:etak}
		\hat \eta_k := \hat \varepsilon_k +
		\sup \{ \inner{\hat v_k}{\hat z_k - x}  : x \in S \}.
	\end{equation}
	Then, using the assumption that
	$(\hat z_k,\hat v_k,\hat \varepsilon_k)$ is
	a $\phi$-compatible triple,
	the definition of $ \varepsilon $-subdifferential in Subsection~\ref{subsec:DefNot},
	and the above definition
	of $\hat \eta_k$, we conclude that
	\[
	\phi(x) \ge \phi(\hat z_k) + \inner{\hat v_k}{x-\hat z_k} - \hat{\varepsilon}_k \ge \phi(\hat z_k) - \hat \eta_k \quad \forall x \in \dom h,
	\]
	or equivalently,
	$0 \in \partial \phi_{\hat \eta_k}(\hat z_k)$.
	Hence,  $\{(\hat z_k,0,\hat \eta_k)\}$ is
	a sequence of $\phi$-compatible triples
	with $\hat v_k=0$ for every $k$,
	or equivalently,
	$\{(\hat z_k,\hat \eta_k)\}$ 
	is a sequence of $\phi$-compatible pairs.
	Moreover, using \eqref{def:etak}, and the assumptions that
	$S$ is bounded and $(\hat v_k,\hat \varepsilon_k)$
	can be made arbitrarily small, we easily see that
	$\hat \eta_k$ can also be made arbitrarily small.
	Observe that this implies that,
	for any given
	tolerance $\bar \varepsilon>0$,
	an index $k$ will eventually be generated such that
	$(\hat z_k,\hat \eta_k)$ is
	a $\bar \varepsilon$-solution pair, and
	detecting the latter property simply amounts
	to checking whether the inequality
	$\hat \eta_k \le \bar \varepsilon$ holds.

	\subsection{Iteration-complexity results} \label{subsec:other}
	

	
	The following lemma states some bounds on the magnitude of the sequences
	$ \{z_k\} $ and $ \{\hat z_k \} $ which are used in establishing the iteration-complexity for RPB to obtain a $(\hat \rho,\hat \varepsilon)$-solution triple.
	
	\begin{lemma}
		For every $ k\ge 1 $, we have
		\begin{align}\label{ineq:z}
			\|z_k-z_0\|&\le \sqrt{2k\lam \delta} + 2d_0, \\
			\|\hat z_k-z_0\|^2&\le 2\lam \delta + 5\sqrt{k}\lam \delta + 3k\lam \delta
			+ \frac{15d_0^2}{2} \label{ineq:hatz}
		\end{align}
		where $\hat z_k$, $d_0$ and $z_k$ are as in \eqref{def:hat zk}, \eqref{def:d0} and \eqref{not}, respectively, and $\delta$ is as in step 0 of RPB.
	\end{lemma}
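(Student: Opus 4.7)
Both bounds follow from combining two already-established ingredients: the telescoping estimate from Lemma~\ref{lem:iterate}(d) and the gap bound from Lemma~\ref{lem:iterate}(e), together with the triangle inequality.

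For \eqref{ineq:z}, the plan is to first invoke Lemma~\ref{lem:iterate}(d) with $z = x_0^*$ (and $\mu=0$, as assumed throughout the section). Using Lemma~\ref{lem:iterate}(c) to replace each $\delta_i$ by $\delta$ and using $\phi(\tz_i) \ge \phi^*$, the resulting recursion telescopes over $i=1,\ldots,k$ to give
\[
\|z_k - x_0^*\|^2 \le d_0^2 + 2k\lam \delta.
\]
The triangle inequality $\|z_k - z_0\| \le \|z_k - x_0^*\| + \|x_0^* - z_0\|$, combined with $\|x_0^* - z_0\| = d_0$ (by \eqref{def:d0}) and the elementary bound $\sqrt{a+b}\le \sqrt{a}+\sqrt{b}$, then yields \eqref{ineq:z}.

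For \eqref{ineq:hatz}, the idea is to note that by \eqref{def:hat zk} and \eqref{not} there exists an index $i \in \{0,1,\ldots,k\}$ with $\hat z_k = \tz_i$, so
\[
\|\hat z_k - z_0\| \le \|\tz_i - z_i\| + \|z_i - z_0\| \le \sqrt{2\lam\delta} + \sqrt{2k\lam\delta} + 2 d_0,
\]
where the two summands are controlled respectively by Lemma~\ref{lem:iterate}(e) and by \eqref{ineq:z} (using $i \le k$). Squaring this chain of triangle inequalities produces one pure square term for each of $2\lam\delta$, $2k\lam\delta$, and $4d_0^2$, together with three cross terms. The cross term $2\sqrt{2\lam\delta}\sqrt{2k\lam\delta} = 4\sqrt{k}\lam\delta$ already contributes to the desired $\sqrt{k}\lam\delta$ slot; the other two cross terms, $4d_0\sqrt{2\lam\delta}$ and $4d_0\sqrt{2k\lam\delta}$, will be absorbed by Young's inequality of the form $2xy \le \alpha x^2 + y^2/\alpha$, picking the parameters so that the extra $\lam\delta$ contribution is pushed into $\sqrt{k}\lam\delta$ and $k\lam\delta$ and the $d_0^2$ coefficient grows only up to $15/2$.

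The main obstacle is the bookkeeping in the final Young-inequality step: one must choose the Young parameters carefully so that the leading $\lam\delta$ coefficient remains no larger than $2$, while simultaneously producing the precise constants $5\sqrt{k}$, $3k$, and $15/2$ in the respective categories of \eqref{ineq:hatz}. This is a routine but intricate numerical tuning; the conceptual content of the lemma is fully captured by the two triangle inequalities above, together with the ingredients from Lemma~\ref{lem:iterate}.
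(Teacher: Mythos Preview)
Your argument for \eqref{ineq:z} is correct and is exactly what the paper does (the paper just cites the first inequality of \eqref{ineq:zk-d0}, which is derived precisely by the telescoping you describe, and then applies the triangle inequality and $\sqrt{a+b}\le\sqrt a+\sqrt b$).

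For \eqref{ineq:hatz}, however, there is a genuine gap: your route through \eqref{ineq:z} is too lossy to reach the stated constants, and no choice of Young parameters can repair it. Concretely, your intermediate bound is
\[
\|\hat z_k-z_0\|^2 \le \bigl(\sqrt{2\lam\delta}+\sqrt{2k\lam\delta}+2d_0\bigr)^2,
\]
but for $k=1$, $d_0=1$, $\lam\delta=7/4$ the right side equals $(2\sqrt{7/2}+2)^2\approx 32.96$, while the right side of \eqref{ineq:hatz} equals $10\lam\delta+15d_0^2/2=25$. So the inequality you would need, namely $(\sqrt{2\lam\delta}+\sqrt{2k\lam\delta}+2d_0)^2\le 2\lam\delta+5\sqrt{k}\lam\delta+3k\lam\delta+15d_0^2/2$, is simply false in general. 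The source of the loss is that \eqref{ineq:z} already spent a $\sqrt{a+b}\le\sqrt a+\sqrt b$ on $\|z_i-x_0^*\|\le\sqrt{d_0^2+2i\lam\delta}$, turning one $d_0$ into $2d_0$; squaring this afterwards doubles the damage.

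The paper avoids this by splitting directly through $x_0^*$ rather than through \eqref{ineq:z}: it writes $\|\tz_i-z_0\|\le\|\tz_i-z_i\|+\|z_i-x_0^*\|+\|x_0^*-z_0\|$ and then applies the weighted Cauchy--Schwarz inequality $(\sum_j a_j)^2\le(\sum_j s_j)(\sum_j a_j^2/s_j)$ with weights $(s_1,s_2,s_3)=(1/\sqrt{i},1,1/2)$. This keeps the \emph{squared} bound $\|z_i-x_0^*\|^2\le d_0^2+2i\lam\delta$ intact inside the estimate, and the weight $1/\sqrt{i}$ on the $\|\tz_i-z_i\|^2\le 2\lam\delta$ term is exactly what produces the $\sqrt{k}\lam\delta$ slot with the right coefficient. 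The paper itself remarks immediately after the lemma that the constant $3$ in front of $k\lam\delta$ is the best it could obtain and that it matters downstream (it is what allows $\delta=\hat\varepsilon/3$ in Theorem~\ref{thm:approximate}); a cruder argument like yours would force a larger constant there and hence a smaller $\delta$ relative to $\hat\varepsilon$.
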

	\begin{proof}
		Using the first inequality in \eqref{ineq:zk-d0}, the triangle inequality, and the facts that $d_0=\|z_0-x_0^*\|$ and $ \sqrt{a+b}\le \sqrt{a} + \sqrt{b} $ for every $ a,b \in \R_+ $, we have 
		\[
		\|z_k-z_0\| \le \|z_k-x_0^*\| + \|z_0-x_0^*\|
		\le \sqrt{2k\lam\delta} + 2 d_0,
		\]
		and hence \eqref{ineq:z} holds.
		Using the fact that
		$(\sum_{i=1}^n a_i)^2 \le (\sum_{i=1}^n s_i)(\sum_{i=1}^n a_i^2/s_i)$
		for every $(a_1,\ldots,a_n)\in \R^n$ and $(s_1,\ldots,s_n)\in \R^n_{++}$, 
		the triangle inequality, the first inequality in \eqref{ineq:zk-d0}, and Lemma \ref{lem:iterate}(e),
		we conclude that for every $ k\ge 1 $,
		\begin{align*}
			\| \tz_k-z_0\|^2&\le 
			\left(\|\tz_k -z_k\| + \|z_k - x_0^*\| + \|x_0^*-z_0\|\right)^2 \\
			&\le \left( \frac{1}{\sqrt{k}}+1+\frac{1}{2}\right) \left( \sqrt{k}\|\tz_k-z_{k}\|^2+\|z_{k}-x_0^*\|^2+2\|x_0^*-z_0\|^2\right)  \\
			&\le \left( \frac{1}{\sqrt{k}}+\frac{3}{2}\right) \left( 2\sqrt{k}\lam \delta + 2k\lam \delta + 3d_0^2\right)  \\
			&=2\lam \delta + 5\sqrt{k}\lam \delta + 3k\lam \delta +\frac{3d_0^2}{\sqrt{k}} + \frac{9d_0^2}{2}.
		\end{align*}
		Since \eqref{eq:hatz}
		implies that
		there exists $i \in \{0,1,\ldots,k\}$ such that $\hat z_k =\tilde z_i$, 
		the above inequality with $ k=i $ then implies that
		\[
		\| \hat z_k-z_0\|^2 = \| \tz_i-z_0\|^2 \le 2\lam \delta + 5\sqrt{i}\lam \delta + 3i\lam \delta + \frac{15d_0^2}{2},
		\]
		from which \eqref{ineq:hatz} immediately follows due to
		the fact that $ i\le k $.
	\end{proof}
	
	We now make a remark about the above result.
	Bound \eqref{ineq:hatz} and its proof can be
	significantly simplified
	at the expense of obtaining a bound whose constant
	multiplying the term $k\lam\delta$ is not as tight as
	its current value, namely 3.
	The current value is the best
	we could obtain and, as we will see from the second inequality for $ \hat \varepsilon_k $ in \eqref{ineq:vk}, 
	the smaller this constant is, the closer $\delta$ can
	be chosen to the tolerance
	$\hat \varepsilon$.
	
	The following two results
	establish the iteration-complexity for
	RPB to find a $(\hat \rho,\hat\varepsilon)$-solution triple (see Definition~\ref{def:approximate}).
	The first one of these two results describes the convergence rate of
	a certain sequence of triples
	$\{(\hat z_k,\hat v_k,\hat \varepsilon_k)\}$ generated by RPB.
	
	\begin{proposition}\label{prop:approximate}
		Define
		\begin{align}
			\hat v_k &:=\frac{z_0-z_k}{\lam k}, \quad
			\hat{\varepsilon}_k := \frac1k \sum_{i=1}^k \delta_i + \frac{\|\hat z_k - z_0\|^2- \|\hat z_k - z_k\|^2}{2\lam k} \label{def:ek} \quad \forall k\ge 1
		\end{align}
		where $\lam$ is as in step 0 of RPB.
		Then, the following statements hold for every $k \ge 1$:
		\begin{itemize}
			\item[a)] $ \hat v_k\in \partial_{\hat \varepsilon_k} \phi(\hat z_k) $;
			\item[b)] the residual pair $(\hat v_k, \hat \varepsilon_k)$ is bounded by
			\begin{equation}\label{ineq:vk}
				\|\hat v_k\|\le \frac{2d_0}{\lam k} + \frac{\sqrt{2\delta}}{\sqrt{\lam k}}, \qquad 0 \le \hat \varepsilon_k\le \frac{5\delta}{2}\left(1 + \frac{1}{\sqrt{k}} + \frac{2}{5k} \right) + 
				\frac{15d_0^2}{4\lam k}
			\end{equation}
			where $ d_0 $ is as in \eqref{def:d0} and $\delta$ is as in step 0 of RPB.
		\end{itemize}
	\end{proposition}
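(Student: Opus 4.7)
The plan is to derive, for each serious iteration $i$, an $\varepsilon$-subgradient inequality at $\hat z_k$, then average over $i=1,\dots,k$ to produce the aggregated residual pair $(\hat v_k,\hat\varepsilon_k)$. First I would use Lemma~\ref{lem:iterate}(b) to read off the subproblem's first-order optimality condition, namely $v_i := (z_{i-1}-z_i)/\lam \in \partial(\tilde f_i + h)(z_i)$. Convexity of $\tilde f_i+h$ together with the minorization $\tilde f_i \le f$ (from the first inequality of \eqref{ineq:prox}) then gives, for every $u \in \R^n$,
\[
\phi(u) \ge (\tilde f_i + h)(z_i) + \inner{v_i}{u - z_i}.
\]
Using the definition of $\delta_i$ in Lemma~\ref{lem:iterate}(c) to substitute $(\tilde f_i + h)(z_i) = \phi(\tz_i) - \|z_i - z_{i-1}\|^2/(2\lam) - \delta_i$, and then $\phi(\tz_i) \ge \phi(\hat z_k)$ for every $i \le k$ (which follows from \eqref{eq:hatz}), one gets
\[
\phi(u) \ge \phi(\hat z_k) - \delta_i - \frac{\|z_i - z_{i-1}\|^2}{2\lam} + \inner{v_i}{u - z_i}.
\]

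The core of the proof is the averaging step. I would apply the polarization identity
\[
\inner{v_i}{u - z_i} = \frac{1}{2\lam}\Bigl[\|z_{i-1}-z_i\|^2 + \|u - z_i\|^2 - \|u - z_{i-1}\|^2\Bigr],
\]
so that summing over $i = 1, \dots, k$ causes the $\|z_i-z_{i-1}\|^2$ terms to cancel and the remaining $\|u-z_i\|^2 - \|u-z_{i-1}\|^2$ pieces to telescope to $\|u - z_k\|^2 - \|u - z_0\|^2$. Dividing by $k$, the inequality reduces to
\[
\phi(u) \ge \phi(\hat z_k) - \frac{1}{k}\sum_{i=1}^k \delta_i + \frac{\|u-z_k\|^2 - \|u-z_0\|^2}{2\lam k}.
\]
A second polarization identity rewrites $\|u-z_k\|^2 - \|u-z_0\|^2 = 2\inner{u - \hat z_k}{z_0 - z_k} + \|\hat z_k - z_k\|^2 - \|\hat z_k - z_0\|^2$, after which $\inner{\hat v_k}{u - \hat z_k}$ appears naturally and the constant absorbs into $\hat\varepsilon_k$ exactly as in \eqref{def:ek}. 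This proves (a).

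For (b), the bound on $\|\hat v_k\|$ follows immediately by plugging \eqref{ineq:z} into the definition $\hat v_k = (z_0-z_k)/(\lam k)$ and using $\sqrt{a+b}\le\sqrt{a}+\sqrt{b}$. For the upper bound on $\hat\varepsilon_k$, I would use $\delta_i \le \delta$ from Lemma~\ref{lem:iterate}(c) to bound the averaged sum by $\delta$, drop the non-positive $-\|\hat z_k - z_k\|^2$ term, and apply \eqref{ineq:hatz} to the remaining $\|\hat z_k - z_0\|^2/(2\lam k)$, which gives exactly the stated bound after grouping terms as $(5\delta/2)(1 + 1/\sqrt{k} + 2/(5k)) + 15 d_0^2/(4\lam k)$. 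The lower bound $\hat\varepsilon_k \ge 0$ is free: it is obtained by setting $u = \hat z_k$ in the $\varepsilon$-subgradient inequality proved in (a).

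The only delicate step is the averaging/telescoping in part (a): one must line up the $\|z_i - z_{i-1}\|^2$ contributions from the two polarization identities so that they cancel, and then carefully re-center around $\hat z_k$ (which is generally different from any particular $z_i$ or $\tz_i$) using the second polarization identity. Everything else is routine estimation from the previously established bounds.
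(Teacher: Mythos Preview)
Your proposal is correct and follows essentially the same route as the paper. The only cosmetic difference is that the paper invokes Lemma~\ref{lem:iterate}(d) directly to obtain the per-step inequality $\phi(\tz_i) - \phi(z) \le \delta_i + (\|z_{i-1}-z\|^2 - \|z_i-z\|^2)/(2\lam)$ (recall $\mu=0$ in this section), whereas you re-derive that inequality from Lemma~\ref{lem:iterate}(b) via the first polarization identity; since Lemma~\ref{lem:iterate}(d) is itself proved by exactly that argument, the two derivations coincide, and the remaining telescoping, re-centering around $\hat z_k$, and the bounds in part~(b) are handled identically.
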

	\begin{proof}
		a) 
		It follows from Lemma \ref{lem:iterate}(d) that
		\[
		\phi(\tz_k) - \phi(z) \le \delta_k + \frac{1}{2\lam} \left( \|z_{k-1}-z\|^2 - \|z_{k} - z\|^2\right).
		\]
		Summing the above inequality from $ k=1 $ to $ k=k $ and using \eqref{eq:hatz}, we have
		\[
		\phi(\hat z_k) - \phi(z) \le \frac1k \sum_{i=1}^k \delta_i + \frac{1}{2\lam k}(\|z_0 - z\|^2- \|z_k - z\|^2).
		\]
		This inequality, the obvious identity 
		\[
		\|z-z_0\|^2-\|z-z_k\|^2=\|\hat z_k-z_0\|^2 - \|\hat z_k - z_k\|^2 + 2\inner{z_0-z_k}{\hat z_k-z} \quad \forall z\in \R^n,
		\]
		and the definitions of $ \hat v_k $ and $ \hat \varepsilon_k $ in \eqref{def:ek} imply that
		for every $z\in \dom h $,
		\begin{align}
			\phi(\hat z_k) - \phi(z) &\le \frac{1}{k}\sum_{i=i}^{k}\delta_i + \frac1{2\lam k} \left( \|\hat z_k-z_0\|^2 - \|\hat z_k - z_k\|^2 + 2\inner{z_0-z_k}{\hat z_k-z} \right)\nn \\
			&= \hat \varepsilon_k + \inner{\hat v_k}{\hat z_k -z},\label{ineq:incl}
		\end{align}
		from which we conclude that statement a) holds due to the definition of $ \varepsilon $-subdifferential.
		
		b) The first inequality in \eqref{ineq:vk} follows by plugging \eqref{ineq:z} into the definition of $ \hat v_k $ in \eqref{def:ek}.
		The first inequality for $ \hat \varepsilon_k $, i.e. $ \hat \varepsilon_k\ge 0 $, follows from  \eqref{ineq:incl} with $ z=\hat z_k $. 
		Using the fact that $ \delta_k \le \delta $ (see Lemma \ref{lem:iterate}(c)), the definition of $ \hat \varepsilon_k $ in \eqref{def:ek} and relation \eqref{ineq:hatz}, we have
		\[
		\hat{\varepsilon}_k \le \frac1k \sum_{i=1}^k \delta_i + \frac{\|\hat z_k - z_0\|^2}{2\lam k} \le \delta + \frac{1}{2\lam k}\left( 2\lam \delta + 5\sqrt{k}\lam \delta + 3k\lam \delta + \frac{15d_0^2}{2}\right),
		\]
		from which the second inequality
		for $ \hat \varepsilon_k $ immediately follows.
	\end{proof}

	We now make some remarks about the above result.
	First, Proposition \ref{prop:approximate}(a) shows
	that RPB naturally generates
	a sequence $\{(\hat z_k,\hat v_k,\hat \varepsilon_k)\}$ of $\phi$-compatible triples.
	Second, Proposition \ref{prop:approximate}(b) implies that the sequence $\{\hat \varepsilon_k\}$ can be made arbitrarily small,
	say $\hat \varepsilon_k \le \hat \varepsilon$,
	for sufficiently large $k$,
	as long as $\delta$ is chosen in $(0,2\hat \varepsilon/5)$.
	Third, the two previous remarks
	ensure that RPB is able to generate a $ (\hat \rho, \hat \varepsilon) $-solution triple
	$(\hat z_k,\hat v_k,\hat \varepsilon_k)$.
	Fourth, the three previous remarks in turn show
	that RPB is able to generate
	a sequence $\{(\hat z_k,\hat v_k,\hat \varepsilon_k)\}$ satisfying the properties
	outlined in the second paragraph
	following Definition~\ref{def:approximate}.
	
	We are now ready to describe the iteration-complexity for
	RPB  to find a $(\hat \rho,\hat \varepsilon)$-solution triple
	of \eqref{eq:ProbIntro}.

	\begin{theorem}\label{thm:approximate}
		For a given tolerance pair $(\hat \rho, \hat \varepsilon) \in \R^2_{++}$,
		the following statements about the RPB method hold with $\delta=\hat \varepsilon/3$:
		\begin{itemize}
			\item[a)] the number of serious iterations performed 
			until it obtains
			a $ (\hat \rho,\hat \varepsilon) $-solution triple $ (\hat z_k,\hat v_k,\hat \varepsilon_k) $ is bounded by
			\[
			\mathcal{O}_1\left( \max\left\lbrace \frac{\hat \varepsilon}{\lam \hat \rho^2}, \frac{d_0^2}{\lam \hat \varepsilon} \right\rbrace \right);
			\]
			\item[b)]  the total number of iterations performed
			until it obtains a $ (\hat \rho,\hat \varepsilon) $-solution triple $ (\hat z_k,\hat v_k,\hat \varepsilon_k) $
			is bounded by
			\begin{equation}\label{cmplx:total-triple}
				\mathcal{O}_1\left( \max\left\lbrace \frac{MM_f}{\hat \rho^2}, \frac{MM_f d_0^2}{\hat \varepsilon^2}\right\rbrace  
				+
				\max\left\lbrace \frac{\hat \varepsilon}{\lam \hat \rho^2}, \frac{d_0^2}{\lam \hat \varepsilon} \right\rbrace 
				+
				\frac{\lam M M_f}{\hat \varepsilon}
				\right),
			\end{equation}
		\end{itemize}
		where $\lam$ and $\delta$ are two of the inputs
		to RPB (see its step 0),
		$d_0$ is as in \eqref{def:d0},
		$ M=M_f+M_h $, and
		$M_f$ and $M_h$ are as
		in (A3) and (A4), respectively.
	\end{theorem}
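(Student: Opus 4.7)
The plan is to derive (a) by choosing the tolerance $\delta$ in RPB so that the rates in Proposition \ref{prop:approximate}(b) push both residuals $\|\hat v_k\|$ and $\hat\varepsilon_k$ below $\hat\rho$ and $\hat\varepsilon$ respectively, and then to derive (b) by multiplying the serious-iteration bound of (a) with the null-iteration bound from Proposition \ref{prop:null-strong}.

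For part (a), the crucial point is that the leading constant in front of $\delta$ in the second inequality of \eqref{ineq:vk} equals $5/2$. Setting $\delta=\hat\varepsilon/3$ makes this leading term equal to $5\hat\varepsilon/6<\hat\varepsilon$, leaving a fixed gap of $\hat\varepsilon/6$ to absorb the vanishing residuals of orders $\delta/\sqrt{k}$, $\delta/k$ and $d_0^2/(\lam k)$. Requiring each of these to be bounded by a fixed fraction of $\hat\varepsilon/6$ yields a condition of the form $k\ge \max\{c,\, c'd_0^2/(\lam\hat\varepsilon)\}$, hence $k=\mathcal{O}_1(d_0^2/(\lam\hat\varepsilon))$ serious iterations suffice to guarantee $\hat\varepsilon_k\le\hat\varepsilon$. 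Similarly, splitting the first inequality in \eqref{ineq:vk} and asking each of its two terms to be at most $\hat\rho/2$ yields $k\ge 4d_0/(\lam\hat\rho)$ and $k\ge 8\hat\varepsilon/(3\lam\hat\rho^2)$; the first of these is dominated by the other two using the AM-GM type bound
\[
\frac{d_0}{\lam\hat\rho}=\sqrt{\frac{d_0^2}{\lam\hat\varepsilon}}\cdot\sqrt{\frac{\hat\varepsilon}{\lam\hat\rho^2}}\le\frac12\left(\frac{d_0^2}{\lam\hat\varepsilon}+\frac{\hat\varepsilon}{\lam\hat\rho^2}\right).
\]
Combining these three thresholds gives the bound in (a).

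For part (b), Proposition \ref{prop:null-strong} with $\delta=\hat\varepsilon/3$ bounds the number of null iterations between any two consecutive serious iteration indices by $\mathcal{O}_1(\lam MM_f/\hat\varepsilon)$. Multiplying the serious-iteration bound from (a) by this per-interval bound and then expanding via $\mathcal{O}_1(a)\cdot\mathcal{O}_1(b)=\mathcal{O}_1(ab+a+b)$ produces three groups of terms: the cross term $\max\{MM_f/\hat\rho^2,\,MM_f d_0^2/\hat\varepsilon^2\}$, the pure serious term $\max\{\hat\varepsilon/(\lam\hat\rho^2),\,d_0^2/(\lam\hat\varepsilon)\}$ and the pure null-per-interval term $\lam MM_f/\hat\varepsilon$, exactly matching \eqref{cmplx:total-triple}. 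The main technical subtlety is not computational but lies in the initial choice of $\delta$: any value strictly smaller than $2\hat\varepsilon/5$ would work, but values too close to this threshold make the ``absorption margin'' in the analysis of $\hat\varepsilon_k$ shrink and worsen the constants. The choice $\delta=\hat\varepsilon/3$ is a convenient balance that keeps the arithmetic clean while retaining a uniform margin of $\hat\varepsilon/6$.
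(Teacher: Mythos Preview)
Your proposal is correct and follows essentially the same approach as the paper: both set $\delta=\hat\varepsilon/3$, use Proposition~\ref{prop:approximate}(b) to obtain the thresholds $k\ge\max\{4d_0/(\lam\hat\rho),\,8\hat\varepsilon/(3\lam\hat\rho^2)\}$ and $k\ge\max\{c,\,c'd_0^2/(\lam\hat\varepsilon)\}$, absorb the $d_0/(\lam\hat\rho)$ term via the AM--GM inequality, and then derive (b) by multiplying the serious-iteration bound against the null-iteration bound of Proposition~\ref{prop:null-strong}. Your explicit expansion $\mathcal{O}_1(a)\cdot\mathcal{O}_1(b)=\mathcal{O}_1(ab+a+b)$ in part (b) spells out what the paper leaves implicit, but the argument is the same.
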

	\begin{proof}
		a) It follows from Proposition \ref{prop:approximate}(a) and 
		Definition \ref{def:approximate} that
		$ (\hat z_k,\hat v_k,\hat \varepsilon_k) $ is a $ \phi $-compatible triple for every  $k \ge 1$.
		Moreover, the first inequality in \eqref{ineq:vk}
		and the fact that $ \delta=\hat \varepsilon/3 $ imply that for every $ k\ge \max\{4d_0/(\lam \hat \rho), 8\hat \varepsilon/(3\lam \hat \rho^2) \} $,
		\[
		\|\hat v_k\|\le \frac{2d_0}{\lam k} + \frac{\sqrt{2\delta}}{\sqrt{\lam k}} \le \frac{\hat \rho}{2} + \frac{\hat \rho}{2} = \hat \rho
		\]
		and the second inequality for $\hat \varepsilon_k$ in \eqref{ineq:vk} and the fact that $ \delta=\hat \varepsilon/3 $ imply that for every  $ k\ge \max\{ 405d_0^2/(2\lam \hat \varepsilon), 36 \} $,
		\[
		\hat \varepsilon_k 
		\le \frac{5\delta}{2}\left( 1+\frac{1}{\sqrt{k}}+\frac{2}{5k}\right) + \frac{15d_0^2}{4\lam k}
		\le \frac{5\hat \varepsilon}{6}\left( 1+\frac{1}{6}+\frac{1}{90}\right) + \frac{\hat \varepsilon}{54} = \hat \varepsilon.
		\]
		The above two observations then imply  that
		$ (\hat z_k,\hat v_k,\hat \varepsilon_k) $ must satisfy the two inequalities in Definition \ref{def:approximate} with $ (v,\eta)=(\hat v_k, \hat \varepsilon_k) $,
		and hence that $ (\hat z_k,\hat v_k,\hat \varepsilon_k) $ is a $ (\hat \rho,\hat \varepsilon) $-solution triple
		(see Definition \ref{def:approximate}),
		for every index $k$ satisfying
		\[
		k\ge \max\left\lbrace \frac{4d_0}{\lam \hat \rho}, \frac{8\hat \varepsilon}{3\lam \hat \rho^2}, \frac{405d_0^2}{2\lam \hat \varepsilon}, 36\right\rbrace.
		\]
		The complexity bound in a) now follows from the last conclusion
		and the inequality $2\sqrt{ab}\le a+b $ with
		$a=\hat \varepsilon/(\lam \hat \rho^2)$ and $b=d_0^2/(\lam \hat \varepsilon)$.
		
		b) This statement immediately follows from a), Proposition \ref{prop:null-strong} with $\delta=\hat \varepsilon/3$ and the assumption that $M_h$ is finite in the beginning of this section.
	\end{proof}

	The following result describes the iteration-complexity
	for RPB to find 
	a $\bar \varepsilon$-solution pair
	$(x,\eta)=(\hat z_k, \hat \eta_k)$ 
	for the case in which $\dom h$
	is bounded.
	(Recall the definition of a $\bar \varepsilon$-solution pair is given in the first paragraph of Subsection \ref{subsec:notion}).
	Observe that the major difference between the result below and
	Theorem \ref{thm:suboptimal} is that the one below provides
	a certificate $\eta=\hat \eta_k$ of the $\bar  \varepsilon$-optimality of $x=\hat z_k$
	while
	Theorem \ref{thm:suboptimal} does not.
	Although it is possible to derive an
	iteration-complexity bound for any
	value of $\lam$ with little extra effort,
	the result below assumes for simplicity that
	$\lam$ lies in a certain range
	and obtains a simpler
	iteration-complexity bound under this assumption.
	
	\begin{corollary}\label{cor:pair}
		Assume that
		$S \subset \R^n$ is a
		compact convex set containing
		$\dom h$ and let $\bar \varepsilon>0$ be a 
		given tolerance. Consider RPB with
		inner tolerance
		$\delta=\bar \varepsilon/6$ and
		prox stepsize $\lam$ satisfying 
		$\bar \varepsilon/(C M M_f) \le \lam \le C D_S^2/\bar \varepsilon$
		where 
		$C>0$ is a universal constant,
		$ M=M_f+M_h $,
		$M_f$ and $M_h$ are as
		in (A3) and (A4), respectively,
		and $ D_S:=\sup\{\|u-u'\|:u,u'\in S \}$, and
		let $\{(\hat z_k,\hat v_k,\hat \varepsilon_k)\}$ and
		$\{\hat \eta_k\}$ denote the sequences obtained according
		to \eqref{def:hat zk}, \eqref{def:ek} and \eqref{def:etak}.
		Then, the overall iteration-complexity of RPB  until
		it finds a $\bar \varepsilon$-solution pair
		$(\hat z_k,\hat \eta_k)$
		is 
		$\mathcal{O}_1(MM_fD^2_S/\bar \varepsilon^2)$.
	\end{corollary}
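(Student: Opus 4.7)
The plan is to combine Theorem~\ref{thm:approximate} with the passage to a $\bar\varepsilon$-solution pair outlined in the paragraphs surrounding \eqref{def:etak}. Concretely, I would choose the intermediate tolerance pair
\[
\hat\rho := \frac{\bar\varepsilon}{2 D_S}, \qquad \hat\varepsilon := \frac{\bar\varepsilon}{2},
\]
so that $\delta=\hat\varepsilon/3=\bar\varepsilon/6$ matches the hypothesis, and then invoke Theorem~\ref{thm:approximate} with this $(\hat\rho,\hat\varepsilon)$.

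First I would verify that a $(\hat\rho,\hat\varepsilon)$-solution triple $(\hat z_k,\hat v_k,\hat\varepsilon_k)$ indeed yields a $\bar\varepsilon$-solution pair $(\hat z_k,\hat\eta_k)$. Since $\hat z_k\in \dom h\subset S$ and $\hat v_k\in \partial_{\hat\varepsilon_k}\phi(\hat z_k)$ by Proposition~\ref{prop:approximate}(a), the Cauchy--Schwarz inequality together with the definition \eqref{def:etak} gives
\[
\hat\eta_k \;=\; \hat\varepsilon_k+\sup_{x\in S}\inner{\hat v_k}{\hat z_k-x} \;\le\; \hat\varepsilon_k+\|\hat v_k\|\,D_S \;\le\; \hat\varepsilon+\hat\rho D_S \;=\; \bar\varepsilon,
\]
so $(\hat z_k,\hat\eta_k)$ is a $\bar\varepsilon$-solution pair, and the reasoning following Definition~\ref{def:approximate} further shows $\phi(\hat z_k)-\phi^*\le\hat\eta_k\le\bar\varepsilon$.

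Next I would apply Theorem~\ref{thm:approximate}(b) to bound the overall iteration count by
\[
\mathcal{O}_1\!\left( \max\!\left\{\frac{MM_f}{\hat\rho^2}, \frac{MM_f d_0^2}{\hat\varepsilon^2}\right\} + \max\!\left\{\frac{\hat\varepsilon}{\lam\hat\rho^2}, \frac{d_0^2}{\lam\hat\varepsilon}\right\} + \frac{\lam MM_f}{\hat\varepsilon} \right)
\]
and then simplify using $d_0\le D_S$ (since $x_0,x_0^*\in\dom h\subset S$) together with the two-sided bound on $\lam$. The first max is directly $\mathcal{O}(MM_f D_S^2/\bar\varepsilon^2)$; for the middle max, the lower bound $\lam\ge\bar\varepsilon/(CMM_f)$ converts $D_S^2/(\lam\bar\varepsilon)$ into $\mathcal{O}(MM_f D_S^2/\bar\varepsilon^2)$; for the last term, the upper bound $\lam\le CD_S^2/\bar\varepsilon$ gives $\lam MM_f/\bar\varepsilon=\mathcal{O}(MM_f D_S^2/\bar\varepsilon^2)$. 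Adding these three $\mathcal{O}_1$ bounds yields the claimed $\mathcal{O}_1(MM_f D_S^2/\bar\varepsilon^2)$.

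The only non-mechanical step is the first verification, where one must be careful that the $\sup$ in \eqref{def:etak} is taken over $S$ rather than $\dom h$ (it is this enlargement that makes $\hat\eta_k$ computable even when $\dom h$ is not explicitly bounded), and that $\hat z_k\in S$ so that $\|\hat z_k-x\|\le D_S$ for all $x\in S$. Everything else is bookkeeping: balancing $\hat\rho$ and $\hat\varepsilon$ so their contributions to $\hat\eta_k$ are each $\bar\varepsilon/2$, and checking that the three summands in the Theorem~\ref{thm:approximate}(b) bound all collapse to the same $MM_f D_S^2/\bar\varepsilon^2$ order under the prescribed $\lam$-range.
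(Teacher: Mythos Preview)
Your proposal is correct and follows essentially the same approach as the paper: you pick the same intermediate tolerance pair $(\hat\rho,\hat\varepsilon)=(\bar\varepsilon/(2D_S),\bar\varepsilon/2)$, invoke Theorem~\ref{thm:approximate}(b), use $d_0\le D_S$ together with the two-sided $\lam$-bound to collapse all three summands to $\mathcal{O}_1(MM_fD_S^2/\bar\varepsilon^2)$, and verify via Cauchy--Schwarz that the resulting triple yields $\hat\eta_k\le\bar\varepsilon$. The paper merely reverses the order of the two steps (complexity bound first, then the $\hat\eta_k\le\bar\varepsilon$ verification), but the argument is the same.
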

	\begin{proof}
		The assumption on $S$ and the fact that $x_0\in \dom h$ clearly imply that $ D_S\ge d_0 $.
		Using this fact, the assumption on $\lam$, and Theorem \ref{thm:approximate}(b) with the tolerance pair $ (\hat \rho,\hat \varepsilon)=(\bar \varepsilon/(2D_S),\bar \varepsilon/2) $, we conclude that the overall iteration-complexity for RPB with $ \delta=\bar \varepsilon/6 $ to find a $ (\hat \rho, \hat \varepsilon) $-solution triple $ (\hat z_k, \hat v_k, \hat \varepsilon_k) $ is bounded by $\mathcal{O}_1(MM_fD^2_S/\bar \varepsilon^2)$. 
		In view of the definition of a $ (\hat \rho, \hat \varepsilon) $-solution triple in Definition \ref{def:approximate}, we have 
		\begin{equation}\label{triple}
			\hat v_k \in \partial_{\hat \varepsilon_k}\phi(\hat z_k), \quad \|\hat v_k\|\le \hat \rho=\bar \varepsilon/(2D_S), \quad \hat \varepsilon_k\le \hat \varepsilon = \bar \varepsilon/2.
		\end{equation}
		The above inclusion and the remarks following \eqref{def:etak} then imply that the pair
		$ (\hat z_k, \hat \eta_k) $ satisfies the inclusion $ 0 \in \partial_{\hat \eta_k}\phi(\hat z_k) $.
		Moreover, the definition of $ \hat \eta_k $ in \eqref{def:etak}, the Cauchy-Schwarz inequality,
		and the two inequalities in \eqref{triple}, imply that
		\[
		\hat \eta_k \le \hat \varepsilon_k + \|\hat v_k\| D_S \le \frac{\bar \varepsilon}{2} + \frac{\bar \varepsilon}{2} =\bar \varepsilon,
		\]
		and hence that $ (\hat z_k, \hat \eta_k) $ is a $ \bar \varepsilon $-solution pair.
		We have thus shown the corollary.
	\end{proof}
	
	\section{Optimal complexity results for RPB} \label{sec:optimal}
	
	This section contains two subsections. The first one presents a lower complexity result.
    The second one shows the optimality of CS-CS and RPB  with respect to some important instance classes introduced in the first subsection.

	\subsection{A lower complexity bound}
	
	Before stating a lower complexity result, we first introduce some
	complexity concepts and define some important instance classes.
	
	Given a tolerance $\bar \varepsilon$ and
	an arbitrary class ${\cal I}$ of instances
	$(x_0, (f,f';h))$ such that $x_0 \in \dom h$ and
	$(f,f';h)$ satisfies (A1)-(A2),
	let ${\cal A}({\cal I},\bar \varepsilon)$
	denote the class of algorithms ${\cal A}$
	which, for some
	given $(x_0,(f,f';h)) \in {\cal I}$,
	start from $x_0$ and generate a finite
	sequence $\{x_{j-1}\}_{j=1}^J$, $J\ge 1$,
	satisfying the following two properties:
	a) within
	$\{x_0,\ldots, x_{J-1}\}$, the iterate $x_{J-1}$ is the only one which is
	a $\bar \varepsilon$-solution of \eqref{eq:ProbIntro};
	and
	b) if $h$ is a quadratic function and $\nabla^2 h$ is a multiple of the identity matrix $I$, then for every $j \in \{1,\ldots,J-1\}$, there holds 
	\begin{equation}\label{incl:xk}
		x_j \in x_0 + \text{Lin} \left\lbrace f'(x_0), \ldots, f'(x_{j-1}), \nabla h(x_0), \ldots, \nabla h(x_{j-1}) \right\rbrace
	\end{equation}
	where \text{Lin}$ \{\cdot \} $ is defined in Subsection~\ref{subsec:DefNot}.
	Clearly, the index $J=J^{\bar \varepsilon}_{x_0}((f,f';h);{\cal A})$ above is uniquely determined by the tolerance $\bar \varepsilon$,
	instance $(x_0,(f,f';h))$ and algorithm ${\cal A}$.
	The function
	$J^{\bar \varepsilon}_{x_0}(\cdot;{\cal A})$,
	defined on ${\cal I}$,
	is referred to as
	the  $\bar \varepsilon$-iteration complexity bound of ${\cal A}$ (with respect to ${\cal I}$). 
	
	For any given
	$\bar \varepsilon >0$ and
	${\cal A} \in {\cal A}({\cal I},\bar \varepsilon)$,
	a
	$\bar \varepsilon$-upper complexity bound for ${\cal A}$ with respect to ${\cal I}$
	is defined to be
	an upper bound on the supremum of
	$J^{\bar \varepsilon}_{x_0}((f,f';h);{\cal A})$ as
	$(x_0,(f,f';h))$ varies in ${\cal I}$.
	Moreover, a
	$\bar \varepsilon$-upper complexity bound for some algorithm
	${\cal A} \in {\cal A}({\cal I},\bar \varepsilon)$
	with respect to ${\cal I}$
	is said to be a $\bar \varepsilon$-upper complexity bound
	for the class ${\cal I}$.
	For a given instance $(x_0,(f,f';h)) \in {\cal I}$,
	a lower bound on the infimum of $J^{\bar \varepsilon}_{x_0}((f,f';h),{\cal A})$ as ${\cal A}$
	varies in ${\cal A}({\cal I},\bar \varepsilon)$ is called a lower complexity bound of $(x_0,(f,f';h))$ with respect to ${\cal A}({\cal I},\bar \varepsilon)$.
	Moreover, a lower complexity bound
	for some instance in ${\cal I}$
	with respect to ${\cal A}({\cal I},\bar \varepsilon)$
	is called a $\bar \varepsilon$-lower complexity bound for
	the class ${\cal I}$.
	Clearly, if $M_1$ and $M_2$ are $\bar \varepsilon$-lower and $\bar \varepsilon$-upper
	complexity bounds for the class ${\cal I}$, respectively,
	then $M_1 \le M_2$. Moreover, if
	$M_2 = {\cal O}(M_1)$, then either $M_1$ or $M_2$ is said to
	be a $\bar \varepsilon$-optimal complexity bound for
	${\cal I}$ and any algorithm ${\cal A} \in {\cal A}({\cal I}, \bar \varepsilon)$
	which has a $\bar \varepsilon$-upper complexity bound equal to ${\cal O}(M_2)$ is said to be $\bar \varepsilon$-optimal for
	${\cal I}$.

	We now define some important instance classes
	for \eqref{eq:ProbIntro}.
	
	\begin{definition}\label{def:ins-class}
		Given
		$(M_f,\mu,R_0) \in \R_+ \times \R_+ \times \R_{++}$,
		let ${\cal I}_\mu(M_f,R_0)$ denote the
		class
		consisting of all instances $(x_0,(f,f';h))$
		satisfying conditions (A1)-(A3) and
		the condition that
		$ d_0\le R_0 $ where $ d_0 $ is as in \eqref{def:d0}.
		Moreover, let $ {\cal I}_\mu^u(M_f,R_0) $ denote the unconstrained class consisting of all instances $(x_0,(f,f';h))\in {\cal I}_\mu(M_f,R_0)$ such that $ h\equiv \mu\|\cdot\|^2/2 $.
	\end{definition}
	
	The following result describes a
		$\bar \varepsilon $-lower complexity bound for any instance class $ {\cal I}  \supset {\cal I}_\mu^u(M_f,R_0) $. 
		Its proof is postponed to Appendix \ref{sec:lb}.
	
	\begin{theorem}\label{thm:lb-cvx}
			For any given quadruple $ (M_f,\mu,R_0, \bar \varepsilon) \in \R_{+} \times \R_{+} \times \R_{++} \times  \R_{++}$,
	there exists an instance $ (x_0,(f,f';h)) $ such that:
			\begin{itemize}
				\item[a)] 
			$ (x_0,(f,f';h)) \in {\cal I}_\mu^u(M_f,R_0) $;
				\item[b)] it has lower complexity bound with respect to $ {\cal A}({\cal I}_\mu^u(M_f,R_0), \bar \varepsilon) $
				given by
			\begin{equation}\label{cmplx:min}
			\left \lfloor \min\left\lbrace \frac{M_f^2 R_0^2}{128 \bar \varepsilon^2} , \frac{M_f^2}{8\mu \bar \varepsilon}\right\rbrace \right \rfloor +1.
			\end{equation}
			\end{itemize}
			As a consequence, \eqref{cmplx:min} is also a $ \bar \varepsilon $-lower complexity bound for
			any instance class
			${\cal I} \supset  {\cal I}_\mu^u(M_f,R_0)$.
		\end{theorem}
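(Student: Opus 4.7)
The plan is to prove the lower bound by constructing a hard instance in $\mathcal{I}_\mu^u(M_f,R_0)$ based on Nesterov's classical ``max'' function together with an adversarial subgradient oracle, and then running a resisting-oracle argument. Specifically, for parameters $\gamma \in (0,M_f]$ and integer $k \ge 1$ to be chosen, I would take $x_0 = 0$, $h(x) = \mu\|x\|^2/2$, and
\[
f(x) = \gamma \max_{1 \le i \le k} x^{(i)}, \qquad f'(x) = \gamma\, e_{i^*(x)},
\]
where $i^*(x)$ denotes the smallest index in $\{1,\ldots,k\}$ attaining the max (and hence $\gamma e_{i^*(x)} \in \partial f(x)$). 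Then $f$ is convex and $\gamma$-Lipschitz, so (A3) holds whenever $\gamma \le M_f$, and a direct computation gives $x^* = -(\gamma/(\mu k))(e_1+\cdots+e_k)$, $\phi^* = -\gamma^2/(2\mu k)$, and $d_0 = \gamma/(\mu\sqrt{k})$.

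The heart of the proof is the subspace-invariance lemma: for every algorithm $\mathcal{A} \in \mathcal{A}(\mathcal{I}_\mu^u(M_f,R_0),\bar\varepsilon)$ and every $j \ge 0$, the iterate $x_j$ lies in $\text{Lin}\{e_1,\ldots,e_j\}$. One proves this by induction: assuming $x_0,\ldots,x_{j-1}$ have vanishing coordinates from position $j$ onward, the tie-breaking rule yields $i^*(x_i) \le j$ and hence $f'(x_i) \in \{\gamma e_1,\ldots,\gamma e_j\}$, while $\nabla h(x_i) = \mu x_i \in \text{Lin}\{e_1,\ldots,e_{j-1}\}$; the class-defining inclusion \eqref{incl:xk} then forces $x_j \in \text{Lin}\{e_1,\ldots,e_j\}$. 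In particular, whenever $j \le k-1$ the $k$-th coordinate of $x_j$ vanishes, so $\max_{1\le i\le k} x_j^{(i)} \ge 0$ and
\[
\phi(x_j) - \phi^* \ge 0 - \phi^* = \frac{\gamma^2}{2\mu k}.
\]
Consequently, if $(\gamma,k)$ is chosen so that $\gamma^2/(2\mu k) > \bar\varepsilon$, no iterate before $x_k$ is a $\bar\varepsilon$-solution, yielding $J \ge k+1$.

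The remaining step is to optimize $(\gamma,k)$ subject to $\gamma \le M_f$, $d_0 = \gamma/(\mu\sqrt{k}) \le R_0$, and $\gamma^2/(2\mu k) > \bar\varepsilon$, and the two terms of the $\min$ arise from a case split on $\mu R_0^2$. In the ``strongly-convex'' regime $\mu R_0^2 \ge 16\bar\varepsilon$, I would take $\gamma = M_f$ and $k = \lfloor M_f^2/(8\mu\bar\varepsilon)\rfloor$: one checks $d_0 = \sqrt{8\bar\varepsilon/\mu} \le R_0$ and $\gamma^2/(2\mu k) \ge 4\bar\varepsilon > \bar\varepsilon$, giving lower bound $\lfloor M_f^2/(8\mu\bar\varepsilon)\rfloor + 1$, which equals $\min\{\cdot\}$ in this regime. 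In the complementary regime $\mu R_0^2 < 16\bar\varepsilon$, I would take $k = \lfloor M_f^2 R_0^2/(128\bar\varepsilon^2)\rfloor$ and tune $\gamma$ to $\min\{M_f,\mu R_0\sqrt{k}\}$; the regime hypothesis then furnishes the gap inequality after a short arithmetic verification, producing $\lfloor M_f^2 R_0^2/(128\bar\varepsilon^2)\rfloor + 1$, matching $\min\{\cdot\}$. Combining the two cases gives the stated $\lfloor \min\{\cdot\}\rfloor + 1$ lower bound, and the final sentence of the theorem is a tautology, since any algorithm that solves every instance of a superclass $\mathcal{I}\supset \mathcal{I}_\mu^u(M_f,R_0)$ within $N$ iterations, in particular, solves each hard instance exhibited above within $N$ iterations.

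The main technical obstacle is the boundary sub-regime where $\mu R_0^2 \le 2\bar\varepsilon$: there the unconstrained minimizer of the ``max-plus-quadratic'' instance lies outside the ball of radius $R_0$ around $x_0$, and neither the choice $\gamma = M_f$ nor $\gamma = \mu R_0\sqrt{k}$ simultaneously satisfies both $d_0 \le R_0$ and the gap-preservation inequality $\gamma^2/(2\mu k) > \bar\varepsilon$. The construction in this sub-regime therefore needs a mild perturbation—for instance, a linear shift that relocates $x^*$ inside the $R_0$-ball, or a slightly more elaborate piecewise-linear $f$ that preserves the one-coordinate-per-iteration ``resisting'' behaviour—after which the subspace-invariance lemma and the gap bound carry over with only cosmetic changes. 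The deliberately generous constants $128$ and $8$ in the statement of the theorem are precisely there to absorb the slack introduced by this perturbation and to yield a clean bound valid across the entire parameter range.
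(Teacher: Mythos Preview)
Your approach is essentially the paper's: the same Nesterov-type max function, the same adversarial subgradient with smallest-index tie breaking, the same subspace-invariance induction, and the same case split on the size of $\mu R_0^2$. In the regime $\mu R_0^2 \gtrsim \bar\varepsilon$ your parameter choice $\gamma=M_f$, $k=\lfloor M_f^2/(8\mu\bar\varepsilon)\rfloor$ mirrors the paper's case~b) almost verbatim (the paper uses $k_0=\lfloor M_f^2/(4\mu\bar\varepsilon)\rfloor$ and the threshold $\mu R_0^2\ge 8\bar\varepsilon$, but the arithmetic is the same).

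The genuine gap is the sub-regime $\mu R_0^2 \le 2\bar\varepsilon$, which in particular includes $\mu=0$. There the instance $\phi=f+h=\gamma\max_i x^{(i)}+\tfrac{\mu}{2}\|x\|^2$ either has no minimizer (when $\mu=0$, so assumption~(A2) fails outright) or has $d_0=\gamma/(\mu\sqrt{k})>R_0$, so the instance is not in ${\cal I}_\mu^u(M_f,R_0)$. Your proposed fixes do not close this gap: a linear shift $\langle c,x\rangle$ with $c\in\text{Lin}\{e_1,\dots,e_k\}$ still leaves $\phi$ unbounded below when $\mu=0$, and ``a slightly more elaborate piecewise-linear $f$'' is not an argument. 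What is actually needed is a regularizer that is strongly convex near the origin (to force a bounded minimizer) yet has \emph{globally} bounded gradient (to preserve $\|f'(x)\|\le M_f$) and satisfies $\nabla(\cdot)(x)\in\text{Lin}\{x\}$ (to preserve the one-coordinate-per-iteration property). The paper achieves this with the Huber function $p_R(x)=\tfrac12\|x\|^2$ for $\|x\|\le R$ and $p_R(x)=R(\|x\|-R/2)$ otherwise: setting
\[
f(x)=\gamma\max_{1\le i\le k_0}x^{(i)}+\tau\,p_R(x),\qquad R=R_0,\quad \tau=\frac{1}{1+\sqrt{k_0}}\Big(\frac{M_f}{R_0}-\mu\sqrt{k_0}\Big),
\]
one gets $\|x^*\|=R_0$, $\|f'(x)\|\le \gamma+\tau R_0 =M_f$, and $(f+h)(x^*)\le -M_fR_0/\bigl(2(1+\sqrt{k_0})\bigr)$, after which your subspace-invariance argument goes through unchanged. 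Without this device (or an equivalent), the proof in the small-$\mu$ regime is incomplete.
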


	It is worth mentioning that
	the second minimand in \eqref{cmplx:min}
	is smaller than the first
	one if and only if $\mu \ge 16\bar \varepsilon/R_0^2$, and
	converges to $\infty$ as $\mu$ approaches zero.

	We now make a few remarks regarding the relationship of
	Theorem \ref{thm:lb-cvx} with
	the ones derived in Theorems 3.2.1 and 3.2.5 of
	\cite{nesterov2018lectures}. 
	First, the class of algorithms
	considered in these three results are the same and
	hence are based on the linear hull
	condition \eqref{incl:xk}. 
	Second, the above three results show the existence of
	bad instances $(x_0,(f',f;h))$ such that
	$h=\mu \|\cdot\|^2/2$ (and hence $h \equiv 0$ when $\mu=0$)
	but the functions $f$ of the ones
	of Theorems 3.2.1 and 3.2.5 of
	\cite{nesterov2018lectures} are
	$M_f$-Lipschitz on the ball ${\bar B}(x^*;R_0)$
	for some
	$x^* \in X^*$ while the $f$ for the one of
	Theorem \ref{thm:lb-cvx} 
	is $M_f$-Lipschitz on
	the whole $\R^n$. In contrast to the bad instances of \cite{nesterov2018lectures}, this additional property of the
	bad instance of Theorem \ref{thm:lb-cvx} allows us to show that \eqref{cmplx:min}
	is a $\bar \varepsilon$-lower complexity bound for a smaller instance class, namely ${\cal I}_\mu^u(M_f,R_0)$,
	than the one considered in \cite{nesterov2018lectures}.
	Third, 
	Theorem 3.2.5 (resp., Theorem 3.2.1) in
	\cite{nesterov2018lectures} obtains
	the $\bar \varepsilon$-lower complexity bound $M_f^2/(2\mu \bar \varepsilon)$ (resp., $M_f^2 R_0^2/(4\bar \varepsilon^2)$ ) only for $\mu  \ge 2\bar \varepsilon/R_0^2$ (resp, $\mu=0$),
	and hence \eqref{cmplx:min} is a valid $\bar \varepsilon$-lower complexity bound for any
	$\mu \in \{0\} \cup [2\bar \varepsilon/R_0^2, \infty) $.
	This contrasts with
	Theorem \ref{thm:lb-cvx} which establishes
	the $\bar \varepsilon$-lower complexity bound \eqref{cmplx:min} for any $\mu \ge 0$.

	\subsection{Optimal complexity results for the CS-CS and RPB methods} \label{subsec:optimal}
	
	This subsection establishes the $\bar \varepsilon$-optimality of the
	CS-CS and RPB with
	respect to
	some of the instance classes introduced in Definition \ref{def:ins-class} 
	as well as in this section.
	
	We first tackle the $\bar \varepsilon$-optimalilty of the CS-CS method.
	Let $(x_0,\lam) \in \dom h \times \R_{++}$ and $(M_f,\mu,R_0) \in \R_+ \times \R_+ \times \R_{++} $ be given.
	It is easy to see that
	CS-CS$(x_0,\lam)$
	satisfies property b) in the paragraph containing \eqref{incl:xk}.
	Hence, for any given universal constant $C >1$, it follows from
	Proposition \ref{prop:sub-new} and the definition of ${\cal I}_\mu(M_f,R_0)$
	in Definition \ref{def:ins-class} that
	CS-CS$(x_0,\lam)$ with
	$\bar \varepsilon/(C M_f^2) \le 4 \lam \le  \bar \varepsilon/M_f^2$
	is in ${\cal A}({\cal I}_\mu(M_f,R_0),\bar \varepsilon)$
	and has $\bar \varepsilon$-upper complexity bound for ${\cal I}_\mu(M_f,R_0)$ given by
	\begin{equation}\label{cmplx:bound}
		{\cal O}_1\left(\min \left\lbrace \frac{M_f^2 R_0^2}{\bar \varepsilon^2}, \left(\frac{M_f^2}{\mu \bar \varepsilon} + 1\right) \log\left( \frac{\mu R_0^2}{ \bar \varepsilon} + 1 \right) \right\rbrace  \right).
	\end{equation}
	This observation together with the $\bar \varepsilon$-lower complexity bound in Theorem \ref{thm:lb-cvx}  implies that 
	\eqref{cmplx:bound} is a $\bar \varepsilon $-optimal complexity bound (up to a logarithmic term) for
	any instance class ${\cal I}$ satisfying $ {\cal I}_\mu^u(M_f,R_0) \subseteq {\cal I} \subseteq {\cal I}_\mu(M_f,R_0) $ and
	that CS-CS$(x_0,\lam)$ with
	$\bar \varepsilon/(C M_f^2) \le 4\lam \le  \bar \varepsilon/M_f^2$ for a universal constant $C >1$
	is $ \bar \varepsilon $-optimal (up to a logarithmic term) for ${\cal I}$.

	We next tackle the $\bar \varepsilon$-optimalilty of the RPB method.
	The following result
	describes 
	conditions on $\bar \varepsilon$ and $(M_f,\mu,R_0)$ that guarantee the $\bar \varepsilon$-optimality of RPB with respect to
	some suitable instance
	classes.
	Its statement
	makes use of the two instance classes ${\cal I}_\mu^u(M_f,R_0)$ and ${\cal I}_\mu(M_f,R_0)$ introduced in Definition \ref{def:ins-class},
	as well as the
	instance class
	$ {\cal I}_0(M_f,R_0;C) $  defined as
	\begin{equation}\label{def:tI}
		{\cal I}_0(M_f,R_0;C) := \{ (x_0,(f,f';h)) \in {\cal I}_0(M_f,R_0):  \mbox{$\exists$  $M_h \le CM_f$ such that $h$ satisfies  (A4)} \}
	\end{equation}
	where $C$ is a  universal constant.

	\begin{theorem}\label{thm:opt2}
	    Let a universal constant $C >0$, tolerance $\bar \varepsilon >0$ and pair $(M_f,R_0) \in \R_+   \times \R_{++}$ be given such that 
	    $C M_f R_0/\bar \varepsilon\ge 1$.
	    Then, the following statements hold:
		\begin{itemize}
		    \item [a)] 
			for any universal constant $C' >0$, RPB$(x_0, \lam, \bar \varepsilon/2)$ with
			$ \lam $ satisfying \eqref{ineq:lam2} with $d_0$ replaced by $R_0$
			is (up to a logarithmic term) $\bar \varepsilon$-optimal for
			any instance class ${\cal I}$ and scalar $\mu \in [0,C' M_f/R_0]$ such that
			\begin{equation}\label{incl:I2}
				{\cal I}_\mu^u(M_f,R_0) \subseteq {\cal I}
				\subseteq {\cal I}_\mu(M_f,R_0);
			\end{equation}
			\item[b)]
			RPB$(x_0, \lam, \bar \varepsilon/2)$ with $ \lam $ satisfying \eqref{ineq:lam1} with $d_0$ replaced by $R_0$
			is $\bar \varepsilon$-optimal for
			any instance class ${\cal I}$ such that
			\begin{equation}\label{incl:I1}
				{\cal I}_0^u(M_f,R_0) \subseteq {\cal I}
				\subseteq {\cal I}_0(M_f,R_0;C).
			\end{equation}
		\end{itemize}
	\end{theorem}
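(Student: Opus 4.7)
The plan is to establish matching $\bar\varepsilon$-upper and $\bar\varepsilon$-lower complexity bounds for RPB on the stated instance classes, using results already developed in the paper. The argument naturally splits into three parts that are common to both (a) and (b): (i) verify that RPB, with the prescribed $\lambda$ and $\delta=\bar\varepsilon/2$, belongs to ${\cal A}({\cal I},\bar\varepsilon)$ for the relevant ${\cal I}$; (ii) derive a $\bar\varepsilon$-upper complexity bound for RPB valid on the larger class in the chain \eqref{incl:I2} or \eqref{incl:I1}; (iii) derive a $\bar\varepsilon$-lower complexity bound valid on the smaller class, and check that (ii) and (iii) agree up to the desired factor.

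For step (i), the nontrivial requirement is the linear hull condition \eqref{incl:xk} when $h$ is a scalar multiple of $\|\cdot\|^2/2$. I would argue this by induction on $j$. Since the prox bundle subproblem \eqref{def:xj} is strongly convex, its optimality condition yields $x_j=x^c_{j-1}-\lambda\bigl(s_j+\nabla h(x_j)\bigr)$ with $s_j\in\partial f_j(x_j)$, and by a standard formula for the subdifferential of a pointwise max of affine functions, $s_j$ lies in the convex hull of $\{f'(x):x\in C_j\}\subset\{f'(x_0),\dots,f'(x_{j-1})\}$. Combined with $\nabla h(x_j)=\mu x_j$ and the inductive hypothesis on $x^c_{j-1}$ (which equals some $x_i$ with $i<j$), solving for $x_j$ places it in the required linear hull. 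The requirement that $x_{J-1}$ be the first $\bar\varepsilon$-solution among $\{x_0,\dots,x_{J-1}\}$ is handled by appending a termination test on the natural output iterate (here the best auxiliary serious iterate $\hat z_k$, reinterpreted as an $x_{J-1}$ in the complexity bookkeeping) and stopping at the first iteration it succeeds, exactly as in the analogous CS-CS argument sketched right before the statement of the theorem.

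For step (ii) in part (a), since $d_0\le R_0$ on ${\cal I}_\mu(M_f,R_0)$ and by hypothesis $\mu\le C'M_f/R_0$, Corollary \ref{thm:bound-strg} applies with $d_0$ replaced by $R_0$ and gives an $\bar\varepsilon$-upper complexity bound of the form \eqref{eq:bound} with $R_0$ in place of $d_0$. For step (ii) in part (b), the defining condition $M_h\le CM_f$ of ${\cal I}_0(M_f,R_0;C)$ together with $\mu=0$ and $d_0\le R_0$ allows Corollary \ref{thm:bound-cvx} to be invoked, producing the $\bar\varepsilon$-upper bound ${\cal O}_1(M_f^2R_0^2/\bar\varepsilon^2)$. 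For step (iii), since any ${\cal I}$ in \eqref{incl:I2} or \eqref{incl:I1} contains ${\cal I}_\mu^u(M_f,R_0)$, Theorem \ref{thm:lb-cvx} yields the $\bar\varepsilon$-lower bound \eqref{cmplx:min} in both cases.

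Finally, one compares (ii) and (iii). In part (b) with $\mu=0$, the second minimand of \eqref{cmplx:min} is vacuous and the first matches ${\cal O}_1(M_f^2R_0^2/\bar\varepsilon^2)$ up to an absolute constant, so RPB is $\bar\varepsilon$-optimal outright. In part (a), the two bounds agree on the first minimand and differ only by the factor $\log(\mu R_0^2/\bar\varepsilon+1)$ on the second minimand, giving optimality up to a logarithmic term. The main obstacle I expect is step (i), specifically the careful verification of the linear hull condition \eqref{incl:xk} through the prox bundle subproblem; the upper and lower bounds themselves are essentially a direct citation of Corollaries~\ref{thm:bound-strg}--\ref{thm:bound-cvx} and Theorem~\ref{thm:lb-cvx}, and the matching is immediate once the bounds are written down side by side.
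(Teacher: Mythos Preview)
Your proposal is correct and follows essentially the same approach as the paper: isolate the verification that RPB belongs to the algorithm class ${\cal A}({\cal I},\bar\varepsilon)$ (which the paper does in a separate lemma, using the optimality condition of \eqref{def:xj} and the affinity of $\nabla h$), then invoke Corollaries~\ref{thm:bound-strg}--\ref{thm:bound-cvx} for the upper bound and Theorem~\ref{thm:lb-cvx} for the lower bound, and compare. The paper's algebra for step~(i) writes $\nabla h(x_j)=\nabla h(x_{\ell_0})+\alpha(x_j-x_{\ell_0})$ rather than your $\nabla h(x_j)=\mu x_j$, which cleanly covers any quadratic $h$ with $\nabla^2 h=\alpha I$ (as required by property~(b) of the algorithm class) rather than just $h=\mu\|\cdot\|^2/2$, but the induction and the rest of the argument are otherwise identical.
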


	We now make some remarks about
	Theorem \ref{thm:opt2}.
	
	The inclusion ${\cal I}_0^u(M_f,R_0) \subseteq  {\cal I}_0(M_f,R_0;C)$
	always holds in view of \eqref{def:tI} and the fact that the composite component
	$h$ of any instance in ${\cal I}_0^u(M_f,R_0)$ is identically zero.
	Hence, in view of the last conclusion of Theorem \ref{thm:lb-cvx},
	\eqref{cmplx:min}  is also a $\bar \varepsilon$-lower complexity bound
	for	$ {\cal I}_0(M_f,R_0;C)$.

	Theorem \ref{thm:opt2}(a) shows that
	RPB with
	$R_0/M_f \le \lam \le C R_0^2/\bar \varepsilon$,
	similar to
	the CS-CS method 
	with
	$\bar \varepsilon/(C M_f^2) \le 4\lam \le  \bar \varepsilon/M_f^2$
	(see Subsection \ref{subsec:cs}),
	is $ \bar \varepsilon $-optimal (up to a logarithmic term) for
	the instance class ${\cal I}_\mu(M_f,R_0)$
	for any $\mu\ge 0$.
	Note that the two ranges of  $\lam$ above do not overlap when $C\le 1$ due to the assumption that $C M_f R_0/\bar \varepsilon\ge 1$ in Theorem \ref{thm:opt2}.
	
	On the other hand,
	Theorem \ref{thm:opt2}(b) asserts that RPB with
	$\lam$ within the much wider
	range $\bar \varepsilon/(CM_f^2) \le \lam \le C R_0^2/\bar \varepsilon$
	is $ \bar \varepsilon $-optimal for
	the smaller instance class $ {\cal I}_0(M_f,R_0;C)$,
	which includes the instance subclass where $h$ is the indicator function
	of a closed convex set.

	\section{Concluding remarks}\label{sec:conclusion}
	
	This paper presents a proximal bundle variant, namely,
	the RPB method, for solving CNCO problems.
	Like many other proximal bundle variants,
	i)
	RPB solves a sequence of prox bundle subproblems
	whose objective functions are obtained by
	a usual regularized composite cutting-plane strategy;
	and ii) 
	RPB performs either serious iterations
	during which the prox-centers are changed or
	null iterations where the prox-centers are left unchanged.
	However, RPB uses the novel condition \eqref{ineq:hpe1}
	involving $\tx_j$
	to decide whether to perform a serious or null
	iteration. 
	Our analysis shows that the consideration of the sequence $\{\tx_j\}$
	plays an important role in the derivation of
	optimal complexity bounds for RPB over a large
	range of prox stepsizes $\lam$
	in the context of
	CNCO problems.
	
	As far as the authors are aware of,
	this is the first time that such results
	are obtained in the context of
	a proximal bundle variant.
	A nice feature of our analysis is that it is
	carried out in the context of
	CNCO problems and takes into account a flexible
	bundle management policy which allows cut removal but no cut aggregation.
	Moreover, it places the CS-CS
	method under the umbrella of RPB
	in that the former can be viewed
	as an instance of the latter with
	a relatively small prox stepsize.
	This paper also establishes iteration-complexity
	results for RPB to obtain
	iterates satisfying practical termination
	criteria.
	

	We now discuss some possible extensions of our analysis in this paper.
	First, recall that we have assumed throughout this paper that the prox stepsize $\lam$
	is constant. We believe that a slightly modified version of
	our analysis can be used to study the case in which
	$\lam$ is allowed to change (possibly within a positive closed bounded interval)
	at every iteration $j$ for which $j$ is a serious iteration index.
	Second, 
	if $f$ is $\mu_f$-strongly convex and $h$ is $\mu_h$-strongly convex,
	then the CNCO problem \eqref{eq:ProbIntro}
	is clearly equivalent to another CNCO problem \eqref{eq:ProbIntro}
	in which
	$f$ is convex, $h$ is $\mu$-strongly convex, and $\mu=\mu_f+\mu_h$.
	Hence, if $\mu_f$ is known, then there is no loss of generality in assuming that only $h$
	is strongly convex.
	Third, a natural question is whether,
	under the
	weaker assumption that $\phi$ is $\mu$-strongly convex,
	the results are still valid for
	RPB  directly applied to the CNCO problem \eqref{eq:ProbIntro}
	without using the above transformation. The advantage of the latter
	approach, if doable, is that it does not
	require the knowledge of $\mu_f$ (nor $\mu_h$).
	Fourth, it would be
	interesting to investigate a variant of RPB under the assumption that $f$ shares properties
	of both a smooth and a nonsmooth
	function, i.e., for some
	nonnegative scalars $M_f$ and $L_f$,
	there holds 
	$\|f'(x)-f'(x')\| \le 2 M_f + L_f\|x-x'\|$ for every $x, x' \in \dom h$.
	Fifth, it would be interesting
	to consider an RPB variant 
	which, instead of using the cutting-plane model $ f_j $ in \eqref{def:fj},
	uses the cut aggregation model considered for example in Chapter 7.4.4 of \cite{ruszczynski2011nonlinear} (see also \cite{du2017rate,de2014convex}). A clear advantage of
	the latter model is that
	the cardinality of the bundle
	is no more than two and, as a
	consequence, subproblem
	\eqref{def:xj} becomes easier to solve.
	Sixth, it would be interesting to extend the conclusion of Corollary \ref{thm:bound-strg} to the one where \eqref{ineq:lam2} is
	replaced by the wider range \eqref{ineq:lam1}.
	Note that such
	version of Corollary \ref{thm:bound-strg},
	if correct,
	would imply Corollary \ref{thm:bound-cvx} as a special case.

	\scriptsize{
		\bibliographystyle{plain}
		\bibliography{Proxacc_ref}
	}
	
	\appendix

	\section{Proof of the iteration-complexity of the CS-CS method}
	\label{sec:CS-CS pf}
	
	The goal of this section is to establish
	a complexity bound for CS-CS$(x_0,\lam)$ with $\lam$
	satisfying
	$\bar \varepsilon/(CM_f^2)=4\lam \le \bar \varepsilon/M_f^2$ for a universal constant $C>1$
	without assuming any condition on the
	initial point $x_0$ other than just being in $\dom h$.
	Before presenting the complexity bound result, we first state a useful technical lemma.
	
	%
	%

	\begin{lemma} \label{lm:easyrecur}
		Assume that scalars $\theta \ge 1$ and $\delta>0$, and
		sequences of nonnegative scalars
		$\{\eta_j\}$ and  $\{\alpha_j\}$ satisfy
		\beq \label{eq:easyrecur}
		\eta_j \le \alpha_{j-1} - \theta \alpha_j + \delta \quad \forall j \ge 1.
		\eeq
		Then, the following statements hold:
		\begin{itemize}
			\item[a)]  $ \min_{1\le j \le k} \eta_j \le 2\delta $ for every $ k\ge 1$ such that
			\[
			k \ge \min \left\lbrace \frac{\alpha_0}{\delta},  \frac{\theta}{\theta-1} \log\left( \frac{\alpha_0(\theta-1)}{\delta} + 1 \right)  \right\rbrace
			\]
			with the convention that the second term is equal to the first term
			when $\theta=1$ (Note  that the second term converges to the first term
			as $\theta \downarrow 1$.);
			
			\item[b)] $ \alpha_k \le \alpha_0 + k \delta$ for every $ k\ge 1 $.
		\end{itemize}
	\end{lemma}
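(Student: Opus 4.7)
The plan is to handle the two parts separately, with (b) being a short consequence of nonnegativity and (a) being proved by a contradiction argument after a clever shift.

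For (b), I would first use $\eta_j \ge 0$ in the recurrence \eqref{eq:easyrecur} to obtain $\theta \alpha_j \le \alpha_{j-1} + \delta$, and then use $\theta \ge 1$ to deduce $\alpha_j \le \alpha_{j-1} + \delta$. A trivial induction on $j$ then yields $\alpha_k \le \alpha_0 + k\delta$.

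For (a), I would split into two cases depending on whether $\theta = 1$ or $\theta > 1$. In the case $\theta = 1$, summing \eqref{eq:easyrecur} telescopically from $j = 1$ to $k$ and using $\alpha_k \ge 0$ gives $\sum_{j=1}^k \eta_j \le \alpha_0 + k\delta$, so $\min_{1 \le j \le k} \eta_j \le \alpha_0/k + \delta$, which is at most $2\delta$ whenever $k \ge \alpha_0/\delta$. In the case $\theta > 1$, I would argue by contradiction: assume $\eta_j > 2\delta$ for every $j \in \{1,\ldots,k\}$, so that \eqref{eq:easyrecur} strengthens to $\theta\alpha_j < \alpha_{j-1} - \delta$. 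The key trick is the shift $\beta_j := \alpha_j + \delta/(\theta-1)$, which converts this to the clean geometric recursion $\beta_j < \beta_{j-1}/\theta$. Iterating gives $\beta_k < \beta_0/\theta^k$, and combining with $\beta_k \ge \delta/(\theta-1)$ yields $\theta^k < 1 + \alpha_0(\theta-1)/\delta$.

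The final step, which is where one must be careful, is to convert this bound on $k \log \theta$ into the form stated in the lemma. Using the standard inequality $\log(1+t) \ge t/(1+t)$ with $t = \theta-1$ gives $\log \theta \ge (\theta-1)/\theta$, hence $1/\log \theta \le \theta/(\theta-1)$. Therefore the hypothesis $k \ge \frac{\theta}{\theta-1}\log(1 + \alpha_0(\theta-1)/\delta)$ is at least as strong as $k \ge \log(1 + \alpha_0(\theta-1)/\delta)/\log\theta$, contradicting the strict inequality derived above. Taking the minimum over the $\theta = 1$ and $\theta > 1$ bounds (with the stated convention that the two agree in the limit $\theta \downarrow 1$) concludes the proof.

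The main obstacle I anticipate is the direction of the logarithmic inequality: the algebra naturally produces a bound involving $1/\log\theta$, and one must verify that the looser bound $\theta/(\theta-1)$ quoted in the lemma statement still suffices, which is precisely the content of $\log(1+t) \ge t/(1+t)$. Everything else is mechanical: the shift trick to eliminate the additive $-\delta$ is the standard device for turning an affine contraction into a purely geometric one.
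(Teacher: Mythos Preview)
Your argument is essentially correct but contains one small gap. For $\theta>1$ you only establish the conclusion under the hypothesis $k \ge \frac{\theta}{\theta-1}\log\bigl(1+\alpha_0(\theta-1)/\delta\bigr)$, whereas the lemma requires it whenever $k$ exceeds the \emph{minimum} of that quantity and $\alpha_0/\delta$; for some values of $(\alpha_0,\delta,\theta)$ the first term $\alpha_0/\delta$ is the smaller one. The fix is immediate: your ``$\theta=1$'' telescoping argument actually works verbatim for all $\theta\ge 1$, since $\sum_{j=1}^k(\alpha_{j-1}-\theta\alpha_j)=\alpha_0-\theta\alpha_k+(1-\theta)\sum_{j=1}^{k-1}\alpha_j\le \alpha_0$ by nonnegativity of the $\alpha_j$ and $\theta\ge 1$. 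Running that argument alongside your contradiction argument gives both bounds for every $\theta>1$, and taking the minimum finishes the proof.

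As for the comparison with the paper: the paper does not split cases or argue by contradiction. Instead it multiplies \eqref{eq:easyrecur} by $\theta^{j-1}$ and sums, obtaining in one stroke
\[
\min_{1\le j\le k}\eta_j \;\le\; \frac{\alpha_0}{\sum_{j=1}^k \theta^{j-1}} + \delta,
\]
and then lower-bounds $\sum_{j=1}^k\theta^{j-1}$ in two ways (by $k$ and by $(e^{(\theta-1)k/\theta}-1)/(\theta-1)$, using exactly the same inequality $\log\theta\ge(\theta-1)/\theta$ that you invoke). Your shift $\beta_j=\alpha_j+\delta/(\theta-1)$ is a nice alternative that makes the geometric decay explicit, and arguably clarifies \emph{why} the logarithmic threshold appears; the paper's weighted-sum approach is more unified and automatically delivers both bounds at once without a case split. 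Both routes rest on the same elementary inequality, so neither is materially shorter or more general than the other.
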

	
	\begin{proof}
		a) Multiplying \eqref{eq:easyrecur} by $ \theta^{j-1} $ and summing the resulting inequality from $ j=1 $ to $ k $, we have
		\begin{equation}\label{ineq:conse}
			\sum_{j=1}^k \theta^{j-1} \left[ \min_{1\le j \le k}  \eta_j \right] \le \sum_{j=1}^k \theta^{j-1} \eta_j \le  \sum_{j=1}^k \theta^{j-1} \left(  \alpha_{j-1} - \theta \alpha_j + \delta \right) =  \alpha_0 - \theta^k \alpha_k +
			\sum_{j=1}^k \theta^{j-1} \delta.
		\end{equation}
		Using the fact that $ \theta \ge e^{(\theta-1)/\theta} $
		for every $\theta \ge 1$, 
		we have
		\[
		\sum_{j=1}^{k} \theta^{j-1} = \max \left\lbrace k, \frac{\theta^k-1}{\theta-1} \right\rbrace \ge \max\left\lbrace k, \frac{e^{(\theta-1) k /\theta} - 1}{\theta-1}\right\rbrace.
		\]
		This inequality, \eqref{ineq:conse} and the fact that $\alpha_k\ge 0$ imply that for every $ k\ge 1 $,
		\[
		\min_{1\le j \le k}  \eta_j \le \alpha_0\min \left\lbrace \frac 1k, \frac{\theta-1}{e^{(\theta-1) k /\theta} - 1} \right\rbrace + \delta,
		\]
		which can be easily seen to imply a).

		b) This statement follows from \eqref{ineq:conse}, the fact that $\eta_j\ge 0$, and the assumption that $ \theta\ge 1$.
	\end{proof}
	
	
	
	
	
			

	
	Now we are ready to present the main result of the subsection.
	\begin{proposition}\label{prop:sub-new}
		Let $(M_f,\mu) \in \R_+ \times \R_+ $ 
		and instance $(x_0,(f,f';h))$ satisfying
		conditions (A1)-(A3)  be given.
		Then, the number of iterations performed by CS-CS$(x_0,\lam)$
		with $\lam\le \bar \varepsilon/(4M_f^2) $
		until it finds
		a $ \bar \varepsilon $-solution is bounded by 
		\[
		\left \lfloor \min \left\lbrace \frac{d_0^2}{\lam \bar \varepsilon}, \frac{1+\lam \mu}{\lam \mu} \log \left( \frac{\mu d_0^2}{\bar \varepsilon} + 1 \right) \right\rbrace \right \rfloor +1.
		\]
	\end{proposition}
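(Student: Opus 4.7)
The plan is to derive a one-step recursion of the form required by Lemma~\ref{lm:easyrecur} and then apply it. Concretely, I will set $\eta_j := \phi(x_j)-\phi^*$, $\alpha_j := \|x_j-x_0^*\|^2/(2\lam)$, $\theta := 1+\lam\mu$, and (initially) $\delta := 2\lam M_f^2$, and aim for the inequality $\eta_j \le \alpha_{j-1} - \theta\,\alpha_j + \delta$ for every $j\ge 1$, which is exactly \eqref{eq:easyrecur}.

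The recursion is obtained in two stages. First, the first-order optimality of $x_j$ in \eqref{eq:sub} together with the $(1/\lam+\mu)$-strong convexity of that subproblem (via Theorem~5.25(b) of \cite{beck2017first}) yields, for every $u\in\dom h$,
\[
h(x_j)-h(u)+\langle f'(x_{j-1}), x_j-u\rangle \le \frac{1}{2\lam}\|u-x_{j-1}\|^2 - \frac{1+\lam\mu}{2\lam}\|u-x_j\|^2 - \frac{1}{2\lam}\|x_j-x_{j-1}\|^2.
\]
Second, I would split $\langle f'(x_{j-1}), x_j-u\rangle = \langle f'(x_{j-1}), x_j-x_{j-1}\rangle + \langle f'(x_{j-1}), x_{j-1}-u\rangle$, lower-bound the two pieces by $-M_f\|x_j-x_{j-1}\|$ (Cauchy--Schwarz plus (A3)) and $f(x_{j-1})-f(u)$ (convexity of $f$), respectively, and then use \eqref{ineq:func} to pass from $f(x_{j-1})$ to $f(x_j)-M_f\|x_j-x_{j-1}\|$. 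The resulting cross term $2M_f\|x_j-x_{j-1}\|$ is absorbed into $\|x_j-x_{j-1}\|^2/(2\lam)$ via the AM--GM estimate $2M_f\|x_j-x_{j-1}\| \le \|x_j-x_{j-1}\|^2/(2\lam) + 2\lam M_f^2$, leaving
\[
\phi(x_j)-\phi(u)\le \frac{1}{2\lam}\|u-x_{j-1}\|^2 - \frac{1+\lam\mu}{2\lam}\|u-x_j\|^2 + 2\lam M_f^2\quad \forall u\in \dom h.
\]
Choosing $u=x_0^*$ delivers the recursion with the parameters listed above.

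For the complexity extraction, the hypothesis $\lam\le\bar\varepsilon/(4M_f^2)$ gives $\delta=2\lam M_f^2 \le \bar\varepsilon/2$, so the recursion remains valid when $\delta$ is inflated to $\bar\varepsilon/2$. Applying Lemma~\ref{lm:easyrecur}(a) with $\delta=\bar\varepsilon/2$, $\theta=1+\lam\mu$, and $\alpha_0=d_0^2/(2\lam)$, the conclusion $\min_{1\le j\le k}\eta_j\le 2\delta=\bar\varepsilon$ is triggered as soon as
\[
k \ge \min\!\left\{\frac{\alpha_0}{\delta},\ \frac{\theta}{\theta-1}\log\!\left(\frac{\alpha_0(\theta-1)}{\delta}+1\right)\right\} = \min\!\left\{\frac{d_0^2}{\lam\bar\varepsilon},\ \frac{1+\lam\mu}{\lam\mu}\log\!\left(\frac{\mu d_0^2}{\bar\varepsilon}+1\right)\right\},
\]
and taking the floor plus a $+1$ to account for the iteration at which the test $\phi(x_j)-\phi^*\le\bar\varepsilon$ first succeeds recovers the stated bound.

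The elementary algebra producing the recursion is routine; the only subtle step is the inflation of $\delta$ from $2\lam M_f^2$ to $\bar\varepsilon/2$. This is precisely what converts the ``raw'' iteration count $d_0^2/(4\lam^2 M_f^2)$ obtained from a direct Lemma~\ref{lm:easyrecur}(a) application into the advertised $d_0^2/(\lam\bar\varepsilon)$, and simultaneously removes $M_f$ from the final bound.
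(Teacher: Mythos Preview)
Your proposal is correct and follows essentially the same approach as the paper: both derive the one-step recursion by combining the $(\mu+1/\lam)$-strong convexity of the subproblem \eqref{eq:sub} (via Theorem~5.25(b) of \cite{beck2017first}) with the bound $f(x_j)-\ell_f(x_j;x_{j-1})\le 2M_f\|x_j-x_{j-1}\|$ obtained from (A3) and \eqref{ineq:func}, then absorb the cross term using $2M_f\|x_j-x_{j-1}\|\le \|x_j-x_{j-1}\|^2/(2\lam)+2\lam M_f^2$, and finally apply Lemma~\ref{lm:easyrecur}(a) with $\delta=\bar\varepsilon/2$ (using $\lam\le\bar\varepsilon/(4M_f^2)$). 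The only difference is cosmetic: the paper works with the linearization notation $\ell_f(u;v)=f(v)+\langle f'(v),u-v\rangle$ throughout, whereas you split the inner product $\langle f'(x_{j-1}),x_j-u\rangle$ explicitly into two pieces, but the resulting inequality and its application are identical.
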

	\begin{proof}
		Recall that an iteration of CS-CS$(x_0,\lam)$ is as in \eqref{eq:sub}.
		Using the fact that the objective function in \eqref{eq:sub} is $ (\mu+1/\lam) $-strongly convex
		and Theorem 5.25(b) of \cite{beck2017first},
		we conclude that for every $ j\ge 1 $ and $ u \in \dom h $,
		\begin{equation}\label{ineq:basic1}
			\ell_f(x_j;x_{j-1})  + h(x_j)+ \frac{1}{2\lam} \|x_j-x_{j-1}\|^2 + \frac{1}{2}\left( \mu + \frac{1}{\lam}\right)  \|u-x_j\|^2 
			\le \ell_f(u;x_{j-1})  + h(u) + \frac{1}{2\lam} \|u-x_{j-1}\|^2
		\end{equation}
		where $ \ell_f(u;v) := f(v)+\inner{f'(v)}{u-v} $ for every $ u,v \in \dom h $.
		Noting that
		(A4), \eqref{ineq:func}, the definition of $ \ell_f $, the triangle inequality, and the Cauchy-Schwarz inequality, imply that
		\[
		f(x_j) - \ell_f(x_j;x_{j-1}) \le |f(x_j)-f(x_{j-1})| + \|f'(x_{j-1})\| \|x_j-x_{j-1}\| \le 2M_f\|x_j-x_{j-1}\|,
		\]
		and using the definition of
		$\phi$ in \eqref{eq:ProbIntro}, and
		the fact that 
		$ \ell_f(\cdot;v)\le f(\cdot) $ for every $ v \in \dom h$, we then conclude from
		\eqref{ineq:basic1} with $u=x_0^*$ that
		\begin{align*}
			\frac{1}{2}\left( \mu + \frac{1}{\lam}\right) & \|x_0^*-x_j\|^2 + \phi(x_j) - \phi^* 
			\le \frac{1}{2\lam} \|x_0^*-x_{j-1}\|^2  +f(x_j) - \ell_f(x_j;x_{j-1}) - \frac{1}{2\lam} \|x_j-x_{j-1}\|^2 \\
			&\le \frac{1}{2\lam} \|x_0^*-x_{j-1}\|^2  + 2M_f\|x_j-x_{j-1}\| - \frac{1}{2\lam} \|x_j-x_{j-1}\|^2 \le \frac{1}{2\lam} \|x_0^*-x_{j-1}\|^2 + 2\lam M_f^2 
		\end{align*}
		where the last inequality is due to the fact that $ a^2+b^2\ge 2ab $ for every $ a,b \in R $.
		Since the above inequality satisfies \eqref{eq:easyrecur} with $ \eta_j=\phi(x_j) - \phi^*$, $\alpha_j=\|x_j-x_0^*\|^2/(2\lam)$, $\theta=1+\lam \mu$, and $ \delta= \bar \varepsilon/2 $ in view of the assumption that $ \lam \le \bar \varepsilon/(4M_f^2) $, it follows from Lemma \ref{lm:easyrecur}(a) and the fact that $ \alpha_0=d_0^2/(2\lam) $ that
		$ \min_{1\le j \le k} \phi(x_j)-\phi^*\le \bar \varepsilon $ for 
		every index $k\ge 1$ such that 
		\[
		k\ge \min \left\lbrace \frac{d_0^2}{\lam \bar \varepsilon}, \frac{1+\lam \mu}{\lam \mu}\log\left( \frac{\mu d_0^2}{\bar \varepsilon} + 1\right)\right\rbrace,
		\]
		and hence that the conclusion of the lemma holds.
	\end{proof}

	\section{Proof of Theorem \ref{thm:lb-cvx}}\label{sec:lb}
	We start by presenting two technical lemmas, which are the starting points of the lower complexity bound analysis.

	\begin{lemma}\label{lem:p}
		For every $ R>0 $, the function $ p_R:\R^n \to \R $ defined as
		\begin{equation}\label{def:p}
			p_R(x) := \left\{ \begin{array}{cc}
				\frac{1}{2}\|x\|^2 &  \mbox{if $ \|x\| \le R$;} \\
				R(\|x\|  - \frac{R}{2})    
				& \mbox{otherwise,}
			\end{array}
			\right.
		\end{equation}
		is a convex differentiable function whose gradient is bounded by $ R $
		everywhere on $\R^n$.
	\end{lemma}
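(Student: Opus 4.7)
The plan is to prove the three claims (differentiability, convexity, and the gradient bound) by first obtaining an explicit formula for $\nabla p_R$ and then reading off the conclusions from it. The two defining branches of $p_R$ are smooth in their interiors, so I would compute
\[
\nabla p_R(x) = x \quad \text{for } \|x\| < R, \qquad \nabla p_R(x) = \frac{R\, x}{\|x\|} \quad \text{for } \|x\| > R,
\]
and then verify the two formulas agree on the sphere $\|x\|=R$, where both yield $x$; similarly, both branches give $p_R(x) = R^2/2$ there, so the function values glue continuously. This establishes that $p_R$ is $C^1$ on $\R^n$.

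The gradient bound is immediate from the formula: for $\|x\|\le R$ we have $\|\nabla p_R(x)\|=\|x\|\le R$, while for $\|x\|>R$ we have $\|\nabla p_R(x)\|=R$. For convexity, I would observe that the map $x\mapsto \nabla p_R(x)$ is precisely the metric projection $P_{\bar B(0,R)}$ of $x$ onto the closed ball $\bar B(0,R)$. Since projections onto nonempty closed convex sets are firmly non-expansive, they are in particular monotone operators; combining this with the fact that $p_R$ is $C^1$ (so that monotonicity of its gradient implies convexity by a standard characterization, e.g.\ Proposition~2.2.6 of \cite{urruty1996convex1}), we conclude that $p_R\in \bConv{n}$.

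The entire argument is a matter-of-fact case analysis, so I do not anticipate a genuine obstacle. The only point requiring care is the verification that the two gradient formulas agree on the sphere $\|x\|=R$ (not merely that the function values agree), since this is what underpins $C^1$-smoothness; once this identification is made, recognizing $\nabla p_R$ as the projection onto $\bar B(0,R)$ gives convexity essentially for free, without having to check second-order conditions or compare difference quotients across the boundary.
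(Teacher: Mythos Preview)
Your proof is correct but establishes convexity by a different route than the paper. The paper writes $p_R(x)=q(\|x\|)$ for a one-variable function $q$ that is increasing and convex, and invokes the composition rule (Proposition~2.1.8 of \cite{urruty1996convex1}); differentiability is then asserted directly with the explicit gradient formula \eqref{eq:grad p}, from which the bound $\|\nabla p_R(x)\|\le R$ is read off. You instead establish $C^1$-smoothness first by gluing the two smooth branches on the sphere $\|x\|=R$, and then deduce convexity by recognizing $\nabla p_R$ as the metric projection $P_{\bar B(0,R)}$ and appealing to monotonicity of projections together with the monotone-gradient characterization of convexity. The paper's argument is marginally leaner and avoids the projection machinery; your approach buys the structural identification $\nabla p_R = P_{\bar B(0,R)}$, which incidentally also yields convexity in one line via the conjugate representation $p_R(x)=\sup_{\|y\|\le R}\{\langle y,x\rangle - \tfrac12\|y\|^2\}$.
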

	\begin{proof}
		Using the fact that the  function $ q:\R_+ \to \R $ defined as
		\[
		q(t) := \left\{ \begin{array}{cc}
			\frac{1}{2}t^2 &  \mbox{if $ t \le R$;} \\
			R(t  - \frac{R}{2})    
			& \mbox{otherwise}
		\end{array}
		\right.
		\]
		is increasing and convex,
		and $ p_R(x)=q(\|x\|) $ for every $x \in \R^n$, it follows from
		Proposition 2.1.8 in \cite{urruty1996convex1} that $ p_R $ is a convex function.
		Moreover, it is easy to see $ p_R $ is differentiable everywhere and its gradient is
		\begin{equation}\label{eq:grad p}
			\nabla p_R(x)= \left\{ \begin{array}{cc}
				x &  \mbox{if $\|x\| \le R$;} \\
				R  \frac{x}{\|x\| }
				& \mbox{otherwise,}
			\end{array}
			\right.
		\end{equation}
		and hence that $ \|\nabla p_R(x)\| \le R $.
	\end{proof}
	
	The following lemma plays an important role in our lower complexity bound analysis, since it constructs a worst-case instance $(x_0,(f,f';h))$ in the class $ {\cal I}_\mu^u(M_f;R_0)$ and provides several properties of the instance.
	
	\begin{lemma}\label{lem:opt}
		For any $ R>0 $, $\gamma\ge0$, $\tau \ge 0$, $k_0 \in \{1,\ldots,n\}$ and $ \mu \ge 0 $, consider $ x_0=0 $, and the functions $ f, h: \R^n \to \R$ and $f': \R^n \to \R^n $
		defined as
		\begin{align}
			f(x) &= f_{R,\gamma,\tau,k_0}(x):= \gamma \max_{1\le i \le k_0} x^{(i)} + \tau p_R(x), \label{def:f}\\
			h(x) &=h_\mu(x):=\frac{\mu}2 \|x\|^2, \quad
			f'(x):=\gamma e_{i^*} + \tau \nabla p_R(x) \label{def:h}
		\end{align}
		where $p_R(\cdot)$ is as in \eqref{def:p}, $ e_{i} $ denotes the $i$-th coordinate vector, and $ i^* $ is the
		smallest index $i \in I_{k_0}(x):= \Argmax\{ x^{(i)} : i =1,\ldots,k_0\}$.
		Then, the following statements hold:
		\begin{itemize}
			\item[a)]
			$ f $ is a convex function and for every $x \in \R^n$
			\begin{equation}\label{eq:g}
				f'(x)\in \partial f(x), \quad \|f'(x)\| \le \gamma+\tau R;
			\end{equation}
			\item[b)] the minimization problem $ \min\{(f+h)(x):x\in \R^n\} $ has a global minimum $x^*$ satisfying
			\[
			\|x^*\|=\frac{\gamma}{(\tau+\mu) \sqrt{k_0}}, \quad (f+h)(x^*)=-\frac{\gamma^2}{2(\tau+\mu) k_0};
			\]
			moreover, if $\mu>0$ then
			$x^*$ is the only global minimum of the above problem;
			\item[c)] 
			the instance $(x_0,(f,f';h))$ is in $ {\cal I}_\mu^u(M_f;R_0)$
			for any $ (M_f,R_0) \in \R_+ \times \R_{++} $ such that $M_f \ge \gamma+\tau R$ and $ R_0\ge d_0 $;
			\item[d)] $ (f+h)(x)\ge 0 $ for every $ x\in \R^{k_0,n} :=\left\lbrace x\in \R^n: x^{(i)}=0, \ i= k_0,\ldots, n \right\rbrace $;
			\item[e)] if $k < k_0$ and $x \in \R^{k,n}$,
			then $f'(x) \in \R^{k+1,n}$.
		\end{itemize}
	\end{lemma}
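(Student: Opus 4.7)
\textbf{Proof proposal for Lemma \ref{lem:opt}.}
The plan is to handle each of (a)--(e) by direct verification, exploiting the block structure of $f$ (a separable sum of a pointwise max and a radial function) together with the radial character of $h$ and $p_R$. For (a), $f$ is convex as the sum of a pointwise maximum of linear functions and the convex function $\tau p_R$ from Lemma \ref{lem:p}. The vector $e_{i^*}$ is a selector from $\operatorname{conv}\{e_i:i\in I_{k_0}(x)\}$, which is precisely the subdifferential at $x$ of $\max_{1\le i\le k_0}x^{(i)}$, so the sum rule for convex subgradients yields $f'(x)\in\partial f(x)$; the norm bound follows at once from the triangle inequality, $\|e_{i^*}\|=1$, and the gradient bound $\|\nabla p_R(x)\|\le R$ established in Lemma \ref{lem:p}.

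For (b), I would first use symmetry and strict convexity arguments among the first $k_0$ coordinates to restrict the search for a minimizer to points of the form $x=-c(e_1+\cdots+e_{k_0})$ with $c\ge 0$, and $x^{(i)}=0$ for $i>k_0$. On such a candidate $\max_{1\le i \le k_0}x^{(i)}=-c$, and, provided $c\sqrt{k_0}\le R$, we have $p_R(x)=\tfrac12 c^2 k_0$ and $h(x)=\tfrac{\mu}{2}c^2 k_0$, so $(f+h)(x)=-\gamma c+\tfrac{(\tau+\mu)k_0}{2}c^2$, which is minimized at $c^*=\gamma/((\tau+\mu)k_0)$ with optimal value $-\gamma^2/(2(\tau+\mu)k_0)$, and the resulting norm is $c^*\sqrt{k_0}=\gamma/((\tau+\mu)\sqrt{k_0})$ as claimed. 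The condition $c^*\sqrt{k_0}\le R$ guarantees consistency with the quadratic regime of $p_R$; the remaining verification, which is the main obstacle, is justifying the symmetric reduction. I would do it in two steps: (i) coordinates indexed $i>k_0$ only enter through $p_R+h$ and hence are driven to zero at any minimizer; (ii) permuting coordinates $1,\ldots,k_0$ leaves $f+h$ invariant, so by Jensen's inequality applied to the average over the symmetric group, an optimal solution can be chosen equal on those coordinates, and this common value is necessarily nonpositive since decreasing the max lowers $f$. When $\mu>0$, uniqueness follows from strong convexity of $f+h$.

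For (c) I will collect: (A1) holds since $f,h$ are finite convex and $\dom h=\R^n$; (A2) is immediate from (b); (A3) follows from (a) together with the inequalities $M_f\ge\gamma+\tau R$ and the $\mu$-convexity of $h=h_\mu$; finally $x_0=0$ gives $d_0=\|x^*\|\le R_0$ by the hypothesis and the formula from (b). Membership in ${\cal I}_\mu^u(M_f,R_0)$ then reduces to noting that $h\equiv\mu\|\cdot\|^2/2$. Statement (d) is a one-line observation: for $x\in\R^{k_0,n}$ we have $x^{(k_0)}=0$ so $\max_{1\le i\le k_0}x^{(i)}\ge 0$, while $p_R\ge 0$ and $h\ge 0$.

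For (e), from \eqref{eq:grad p} the vector $\nabla p_R(x)$ is a scalar multiple of $x$, hence lies in $\R^{k,n}\subset\R^{k+1,n}$. It remains to show $e_{i^*}\in\R^{k+1,n}$, i.e., $i^*\le k$. Since $x\in\R^{k,n}$ and $k<k_0$, the coordinates $x^{(k)},x^{(k+1)},\ldots,x^{(k_0)}$ all equal zero, so the maximum of $\{x^{(1)},\ldots,x^{(k_0)}\}$ is nonnegative. If this maximum is strictly positive, it is attained at some index $i^*\le k-1$; otherwise the maximum is $0$ and the smallest index at which it is attained is at most $k$ (since $x^{(k)}=0$ is a valid candidate). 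In either case $i^*\le k$, so $f'(x)\in\R^{k+1,n}$, completing the plan.
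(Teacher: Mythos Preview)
Your proposal is correct and follows essentially the same line as the paper for parts (a), (c), (d), and (e); the paper's proofs of these are virtually identical to yours (same subdifferential sum rule and Lipschitz bound from Lemma~\ref{lem:p} for (a), same one-line argument via $x^{(k_0)}=0$ for (d), same observation that $\nabla p_R(x)$ is parallel to $x$ and $i^*\le k$ for (e)).

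For (b) the paper gives no argument at all, merely citing page~196 of Nesterov's textbook, whereas you supply the standard explicit derivation: reduce to the symmetric ray $x=-c(e_1+\cdots+e_{k_0})$ via permutation invariance and convexity (Jensen over the symmetric group), set coordinates $i>k_0$ to zero because they enter only through the radial part, and then optimize a one-dimensional quadratic. Two small remarks. First, your phrase ``strict convexity arguments'' is inaccurate---the averaging step needs only convexity, and indeed $f+h$ need not be strictly convex when $\mu=0$; your actual Jensen argument is fine. Second, you correctly flag the side condition $c^*\sqrt{k_0}\le R$ needed to stay in the quadratic regime of $p_R$; this is equivalent to $\gamma\le(\tau+\mu)R\sqrt{k_0}$ and is \emph{not} a hypothesis of the lemma as stated, yet without it part~(b) can fail (for $\mu=0$ and $\gamma>\tau R\sqrt{k_0}$ the problem is unbounded below). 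The paper never addresses this, but every invocation of the lemma in the proof of Theorem~\ref{thm:lb-cvx} satisfies it, so your flag is a genuine sharpening of the exposition rather than a defect in your argument.
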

	\begin{proof}
		a) 
		Noting that $ x^{(i)}=e_i^T x $, and using the definition of $ f $ in \eqref{def:f}, Lemma \ref{lem:p},  Proposition 2.1.2 in \cite{urruty1996convex1}, and the facts that $\gamma\ge 0$ and $\tau\ge 0$, we have $ f $ is convex.
		Moreover, it follows from \eqref{def:f} and Lemma \ref{lem:p} that 
		$ \partial f(x)=\gamma {\rm conv} \{e_i: i \in I_{k_0}(x)\} + \tau \nabla p_R(x) $,
		which together with the definition of $ f' $ in \eqref{def:h} implies that the inclusion in \eqref{eq:g} holds.
		Finally, using the definition of $ f' $, the triangle inequality and Lemma \ref{lem:p}, we conclude that the inequality in \eqref{eq:g} holds.
		
		b) The first statement can be analogously proved by following a similar argument as in P196 of \cite{nesterov2018lectures}.
		The second statement immediately follows from the fact $ f+h $ is $ \mu $-strongly convex when $ \mu>0 $.
		
		c) It follows from the assumptions in the statement, the definition of $ h $ in \eqref{def:h}, and statements a) and b) that $ (x_0,(f,f';h)) $ satisfies $ d_0\le R_0 $ and (A1)-(A3) with $ M_f\ge \gamma+\tau R $. Hence, the statement holds.
		
		d) Using the definitions of $ p_R(x)$ and $h(x) $ in \eqref{def:p} and \eqref{def:h}, respectively, we have $ p_R(x)\ge 0$ and $ h(x)\ge 0 $ for every $ x \in \R^n $.
		This conclusion and the definition of $ f $ in \eqref{def:f} imply that for $ x\in \R^{k_0,n} $,
		\[
		(f+h)(x)\ge \gamma \max_{1\le i \le k_0} x^{(i)} \ge \gamma x^{(k_0)} = 0.
		\]
		
		e) Using the assumption that $x\in \R^{k,n}$, \eqref{eq:grad p} and the definition of $ i^* $ in the line below \eqref{def:h}, we have $ \nabla p_R(x) \in \R^{k,n} $ and $ i^* \le k $. It now follows from the definition of $ f' $ in \eqref{def:h} that $ f'(x) \in \R^{k+1,n} $.
	\end{proof}

	\vgap
	
	Now we are ready to prove Theorem \ref{thm:lb-cvx}.
	
	\vgap
	
	\noindent
	\textbf{Proof of Theorem \ref{thm:lb-cvx}}
		First note that the
		last claim of the theorem follows immediately from
		the claim above it
		and the definition of
		$\bar \varepsilon$-lower complexity bound (see the paragraph following \eqref{incl:xk}).
		We now show that,
		for an arbitrary 
		quadruple $ (M_f,\mu,R_0, \bar \varepsilon) \in \R_{+} \times \R_{+} \times \R_{++} \times  \R_{++}$,
		there exists
		an instance $(x_0,(f,f';h))$ satisfying a) and b).
		The proof considers the following two cases separately:
		\begin{itemize}
			\item[a)] $\mu R_0^2\le 8 \bar \varepsilon$;
			\item[b)] $\mu R_0^2\ge 8 \bar \varepsilon$.
		\end{itemize}
		{\bf Proof of case a):} Assume that
		condition a) is satisfied.
		The proof under this condition in turn considers the following
		two subcases separately:
		a1) $ M_f R_0/\bar \varepsilon < 8 $, and a2) $ M_f R_0/\bar \varepsilon \ge 8 $.
		
		For case a1), choose the dimension $n \ge 1$ arbitrarily, and consider the instance $ (x_0,(f,f';h)) $ as in Lemma \ref{lem:opt} with $(R, k_0, \gamma, \tau)=(R_0, n, 0,M_f/R_0 )$. 
		Lemma \ref{lem:opt}(b) and the facts that $x_0=0$ and $\gamma=0$
		imply that $ x^*=0$ and $ d_0=\|x^*-x_0\|=0$, and
		hence that $d_0 \le R_0$
		and
		$M_f = \gamma + \tau R $
		due to the above definitions of $R$, $\gamma$ and $\tau$.
		Clearly, a) now follows from
		Lemma \ref{lem:opt}(c). 
		Note that $ M_f R_0/\bar \varepsilon<8 $ implies that \eqref{cmplx:min} reduces to 1. 
		Since any algorithm has to perform at least one iteration, it follows that the instance $ (x_0,(f,f';h)) $ satisfies b).
		
		For case a2), consider the instance $ (x_0,(f,f';h)) $ as in Lemma \ref{lem:opt} with dimension $n$ such that
		$n \ge k_0$ and
		$(R, k_0, \gamma,\tau)$
		defined as
		\begin{equation}\label{def:k0}
			R=R_0, \quad k_0= \left\lfloor \frac{M_f^2 R_0^2}{64 \bar \varepsilon^2} \right\rfloor, \quad \gamma=\frac{\sqrt{k_0}}{1+\sqrt{k_0}}(M_f+\mu R_0), \quad \tau=\frac{1}{1+\sqrt{k_0}}\left(\frac{M_f}{R_0} - \mu \sqrt{k_0} \right).
		\end{equation}
		Using Lemma \ref{lem:opt}(b), \eqref{def:k0}, and the fact that $ x_0=0 $,
		it is easy to see that
		\begin{equation}\label{eq:relation}
			d_0 \le \|x^*-x_0\| = \|x^*\|
			=R_0, \qquad (f+h)(x^*)=
			-\frac{(M_f+\mu R_0) R_0}{2(1+\sqrt{k_0})} \le -\frac{M_f R_0}{2(1+\sqrt{k_0})}
		\end{equation}
		where $ x^* $ is as in Lemma \ref{lem:opt}(b).
		Moreover, using the definitions of $ k_0 $ and $ \tau $ in \eqref{def:k0}, the assumption that $ \mu R_0^2 \le 8 \bar \varepsilon $, and the fact that $ x\ge \lfloor x \rfloor $ for every $ x\in \R $, we have
		\begin{equation}\label{eq:tau}
			\tau 
			= \frac{1}{1+\sqrt{k_0}}\left(\frac{M_f}{R_0} - \mu \left \lfloor \frac{M_f^2 R_0^2}{64 \bar \varepsilon^2} \right \rfloor ^{1/2} \right)
			\ge \frac{1}{1+\sqrt{k_0}} \frac{M_f}{R_0} \left( 1 - \frac{\mu R_0^2}{8\bar \varepsilon} \right)\ge 0.
		\end{equation}
		We next show that
		the above instance satisfies a) and b).
		Indeed, 
		a) follows from
		Lemma \ref{lem:opt}(c) by noting
		that all the assumptions required
		by it follow from
		\eqref{def:k0}, \eqref{eq:relation}
		and \eqref{eq:tau}.
		We next show b). In view of the definition of $k_0$ in \eqref{def:k0}, it suffices to show that
		%
		the number of iterations performed by any algorithm ${\cal A}$ in ${\cal A}({\cal I}_\mu^u(M_f;R_0),\bar \varepsilon)$  
		is at least $ k_0$.
		Indeed, first note that
		the assumption of case ii) imply that $ k_0\ge1 $ which, together
		with
		the definition of $ k_0 $ in \eqref{def:k0}, implies that
		\begin{equation}\label{ineq:k0}
			1+\sqrt{k_0} \le 2\sqrt{k_0} 
			= 2\left \lfloor \frac{M_f^2R_0^2}{64 \bar \varepsilon^2} \right \rfloor^{1/2}
			\le \frac{M_f R_0}{4 \bar \varepsilon}.
		\end{equation}
		Moreover, if $\{x_k\}$ is a sequence generated by ${\cal A}$, then it follows from
		the fact that $ x_0=0 $,
		condition \eqref{incl:xk}, Lemma \ref{lem:opt}(e), and a
		straightforward induction
		argument, that
		$ x_k \in \R^{k+1,n} \subset \R^{k_0,n}$
		for every $ k \le k_0-1$. Hence, it
		follows from Lemma \ref{lem:opt}(d) that $ (f+h)(x_k)\ge 0 $ for every $ k\le k_0-1 $. This conclusion, the second
		relation in \eqref{eq:relation}, and \eqref{ineq:k0}, then imply that
		\[
		(f+h)(x_k)-(f+h)(x^*) \ge -(f+h)(x^*) \ge \frac{M_f R_0}{2(1+\sqrt{k_0})} \ge 2 \bar \varepsilon \qquad \forall k \le k_0-1,
		\]
		and hence that
		the number of iterations of ${\cal A}$ is at least $ k_0 $.
		
		\noindent
		{\bf Proof of case b):} Assume that
		condition b) is satisfied.
		The proof under this condition in turn considers the following
		two subcases separately:
		b1) $ M_f^2/(\mu \bar \varepsilon) < 8 $, and b2) $M_f^2/(\mu \bar \varepsilon) \ge 8 $.
		
		For case b1), consider 
		the instance $ (x_0,(f,f';h)) $ such that $ x_0=0 $, $ f=0 $, $ f'=0 $ and $ h=\mu\|\cdot\|^2/2 $. 
		It is easy to see that $ x^*=0 $ and $ d_0=\|x^*-x_0\|=0$. 
		Note that $ (x_0,(f,f';h)) $ clearly satisfies (A1)-(A3), $ d_0\le R_0 $ and $ h=\mu\|\cdot\|^2/2 $, and hence that a) holds.
		Note that $ M_f^2/(\mu \bar \varepsilon)<8 $ implies that \eqref{cmplx:min} reduces to 1. 
		Since any algorithm has to perform at least one iteration, it follows that the instance $ (x_0,(f,f';h)) $ satisfies b).
		
		For case b2),
		consider the instance $ (x_0,(f,f';h)) $ as in Lemma \ref{lem:opt} with dimension $n$ such that
		$n \ge k_0$ and
		$(R, k_0, \gamma,\tau)$
		defined as
		\begin{equation}\label{eq:k0}
			R=R_0, \qquad k_0= \left \lfloor \frac{M_f^2}{4\mu \bar \varepsilon} \right \rfloor, \qquad \gamma=M_f, \qquad \tau=0.
		\end{equation}
		Using Lemma \ref{lem:opt}(b), \eqref{eq:k0}, and the fact that $ x_0=0 $,
		it is easy to see that
		\begin{equation}\label{rel1}
			d_0\le \|x_0-x^*\|=\|x^*\|=\frac{M_f}{\mu \sqrt{k_0}}, \qquad (f+h)(x^*)=-\frac{M_f^2}{2\mu k_0}
		\end{equation}
		where $ x^* $ is as in Lemma \ref{lem:opt}(b).
		Moreover, it follows from the facts that $ M_f^2/(\mu \bar \varepsilon) \ge 8$ and $ \lfloor x \rfloor\ge x-1 $ for every $ x\in \R $ that $ \lfloor M_f^2/(4\mu \bar \varepsilon) \rfloor \ge M_f^2/(8\mu \bar \varepsilon) $.
		This inequality, the first relation in \eqref{rel1}, the definition of $ k_0 $ in \eqref{eq:k0}, 
		and the assumption that $ \mu R_0^2\ge 8 \bar \varepsilon $, imply that
		\begin{equation}\label{ineq:d0-R0}
			d_0
			\le\frac{M_f}{\mu} \left \lfloor \frac{M_f^2}{4\mu \bar \varepsilon} \right \rfloor^{-\frac12} 
			\le \frac{M_f}{\mu}\left( \frac{M_f^2}{8\mu \bar \varepsilon} \right) ^{-\frac12}
			=\left( \frac{8\bar \varepsilon}{\mu}\right)^{\frac12}
			\le R_0.
		\end{equation}
		We next show that
		the above instance satisfies a) and b).
		Indeed, 
		a) follows from
		Lemma \ref{lem:opt}(c) by noting
		that all the assumptions required
		by it follow from
		\eqref{eq:k0} and \eqref{ineq:d0-R0}.
		We next show b).
		In view of the definition of $ k_0 $ in \eqref{eq:k0}, it suffices to show that the number of iterations performed by any algorithm $ {\cal A} $ in ${\cal A}({\cal I}_\mu^u(M_f;R_0),\bar \varepsilon)$  is at least $ k_0$.
		Assume then that $\{x_k\}$ is a
		sequence generated by ${\cal A}$.
		As in the proof of Theorem \ref{thm:lb-cvx}, we have
		$ (f+h)(x_k)\ge 0 $ for every $ k\le k_0-1 $. Hence, using
		the definition of $ k_0 $ in \eqref{eq:k0}
		and the second relation in \eqref{rel1}, we conclude that
		\[
		(f+h)(x_k) - (f+h)(x^*) \ge -(f+h)(x^*)=\frac{M_f^2}{2\mu k_0}\ge 2\bar \varepsilon  \qquad \forall k \le k_0-1,
		\]
		and hence that the number of iterations of ${\cal A}$ is at least $ k_0 $.
	\QEDA

	\section{Proof of Theorem \ref{thm:opt2}} \label{sec:pf-opt}
	
	We start by stating a technical but simple lemma about RPB$(x_0,\lam,\bar \varepsilon/2)$.
	
	\begin{lemma}\label{lem:RPB}
		For any $\bar \varepsilon, \lam >0$ and $(M_f,\mu,R_0) \in \R_+^3$,
		RPB$(x_0,\lam,\bar \varepsilon/2)$ is in ${\cal A}({\cal I}_\mu^u(M_f,R_0), \bar \varepsilon)$.
	\end{lemma}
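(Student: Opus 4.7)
The plan is to verify that RPB$(x_0,\lam,\bar \varepsilon/2)$ satisfies both defining properties of ${\cal A}({\cal I}_\mu^u(M_f,R_0),\bar \varepsilon)$: the termination-at-first-$\bar \varepsilon$-solution property (a) and the linear-hull property (b) valid when $h$ is quadratic with Hessian a multiple of $I$. Since every instance in ${\cal I}_\mu^u(M_f,R_0)$ has $h\equiv \mu\|\cdot\|^2/2$, property (b) is exactly what must be checked for each such instance.

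For property (a), I would appeal directly to Theorem \ref{thm:suboptimal}(c) to conclude that RPB$(x_0,\lam,\bar \varepsilon/2)$ generates in finitely many iterations an auxiliary serious iterate $\hat z_k$ with $\phi(\hat z_k)-\phi^*\le\bar \varepsilon$. Then, a short induction using the update rules \eqref{def:hat zk} and \eqref{def:txj} — that is, $\hat z_k\in\{\hat z_{k-1},\tx_{j_k}\}$ and $\tx_j\in\{\tx_{j-1},x_j\}$ together with $\tx_0=\hat z_0=x_0$ — shows that $\hat z_k\in\{x_0,x_1,\ldots,x_{j_k}\}$. Hence at least one iterate $x_i$ with $i\le j_k$ is itself a $\bar \varepsilon$-solution; truncating the RPB sequence at the least such index produces the sequence $\{x_{j-1}\}_{j=1}^J$ required by (a).

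For property (b), the key computation is the first-order optimality condition for the subproblem \eqref{def:xj}, combined with the fact that $\nabla h(x)=\mu x$ whenever $h=\mu\|\cdot\|^2/2$. Using the formula for $\partial f_j(x_j)$ as $\mathrm{conv}\{f'(x_i):x_i\in A_j\}$ (which appeared in the proof of Lemma \ref{lem:Aj}(a)), the optimality condition yields
\[
(1+\lam\mu)x_j = x_{j-1}^c - \lam s_j, \qquad s_j=\sum_{i:\,x_i\in A_j}\alpha_i\,f'(x_i),
\]
with $A_j\subset C_j\subset\{x_0,\ldots,x_{j-1}\}$. Rewriting $\lam\mu x_0=\lam\nabla h(x_0)$ then gives
\[
x_j - x_0 = \frac{(x_{j-1}^c-x_0)-\lam s_j-\lam\nabla h(x_0)}{1+\lam\mu}.
\]
Setting $V_j:=\mathrm{Lin}\{f'(x_0),\ldots,f'(x_{j-1}),\nabla h(x_0),\ldots,\nabla h(x_{j-1})\}$, I would prove $x_j-x_0\in V_j$ by induction on $j$. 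The base case ($j=1$, where $x_{j-1}^c=x_0$) is immediate. For the inductive step, the prox-center $x_{j-1}^c$ lies in $\{x_0,\ldots,x_{j-1}\}$ so $x_{j-1}^c-x_0\in V_j$ by induction, and $s_j\in V_j$ by its expression as a combination of the $f'(x_i)$ with $i\le j-1$; combined with $\nabla h(x_0)\in V_j$ this yields $x_j-x_0\in V_j$, as required.

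I expect no serious obstacle here — the argument is a routine combination of Theorem \ref{thm:suboptimal}(c) with an optimality-condition calculation. The only subtle point is the bookkeeping for property (a): one must recognize that although RPB's termination is phrased in terms of $\hat z_k$ rather than $x_j$, every $\hat z_k$ is in fact one of the already-computed iterates $x_i$, so the $x$-sequence itself automatically contains a $\bar \varepsilon$-solution within the complexity bound of Theorem \ref{thm:suboptimal}(c).
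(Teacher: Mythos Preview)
Your proposal is correct and follows essentially the same approach as the paper: property (a) is deduced from Theorem~\ref{thm:suboptimal}(c), and property (b) is obtained from the optimality condition for \eqref{def:xj} together with $\partial f_j(x_j)=\mathrm{conv}\{f'(x):x\in A_j\}$ and the linearity of $\nabla h$, followed by an induction on $j$. Your treatment of (a) is in fact more careful than the paper's one-line appeal to Theorem~\ref{thm:suboptimal}(c): you make explicit that $\hat z_k\in\{x_0,\ldots,x_{j_k}\}$ via \eqref{def:txj} and \eqref{def:hat zk}, and then truncate the $x$-sequence at the first $\bar\varepsilon$-solution, which is exactly what the definition of ${\cal A}({\cal I},\bar\varepsilon)$ requires.
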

	\begin{proof}
		To simplify notation within this proof, denote
		${\cal I}_\mu^u(M_f,R_0)$ simply by ${\cal I}_\mu^u$.
		Our goal is to show that RPB satisfies properties a) and
		b) in the definition (see the paragraph containing \eqref{incl:xk})
		of ${\cal A}({\cal I}_\mu^u,\bar \varepsilon)$. Indeed, a) follows from Theorem \ref{thm:suboptimal}(c).
		In order to show property b), assume that
		there exists $\alpha \ge \mu$ such that 
		$\nabla^2 h (x) = \alpha I$ for every $x \in \R^n$.
		Note first that the optimality condition of \eqref{def:xj}, the above assumption on $h$,
		and the facts that $ x_{j-1}^c=x_{\ell_0} $ and $ \partial f_j(x_j)={\rm conv} \{f'(x): x\in A_j \} $ (see Corollary 4.3.2 of \cite{urruty1996convex1}),
		imply that  for any two consecutive serious iteration indices
		$ \ell_0 $ and $ \ell_1 $ and any index $j$ such that
		$ \ell_0 < j \le \ell_1 $,
		\begin{align*}
			0 &\in \partial f_j(x_j) + \nabla h(x_j) + \frac{1}{\lam} (x_j-x_{\ell_0}) = {\rm conv} \{f'(x): x\in A_j \} + \nabla h(x_j) + \frac{1}{\lam} (x_j-x_{\ell_0}) \\
			&= {\rm conv} \{f'(x): x\in A_j \}+ \nabla h(x_{\ell_0}) + \left(\frac{1}{\lam} +\alpha \right)(x_j-x_{\ell_0}),
		\end{align*}
		and hence that \eqref{incl:xk} holds with $x_0$ replaced by $x_{\ell_0}$.
		Using this inclusion and a simple induction argument, it is easy to see that \eqref{incl:xk} holds for every $j\ge 1$, and hence that property b) holds.
	\end{proof}
	
	\vgap
	
	We are now ready to present the proof of Theorem \ref{thm:opt2}.
	
	\vgap
	
	\noindent
	{\bf Proof of Theorem \ref{thm:opt2}}
		For shortness, 
		RPB$ (x_0, \lam, \bar \varepsilon/2) $ is referred below to as RPB.
		
		a)
		In view of Theorem \ref{thm:lb-cvx}, a) will follow
		from the claim that
		\eqref{cmplx:bound} is a
		$\bar \varepsilon$-upper complexity bound
		for RPB
		with respect to the instance class $ {\cal I}_\mu(M_f,R_0)$.
		To show the latter claim, first note that RPB is in $ {\cal A}({\cal I}_\mu(M_f,R_0),\bar \varepsilon) $ in view of Lemma \ref{lem:RPB}.
		It follows from Corollary \ref{thm:bound-strg} and the fact $ d_0\le R_0 $, we conclude that
		\eqref{cmplx:bound}
		is a $ \bar \varepsilon $-upper complexity bound for RPB with respect to $ {\cal I}_\mu(M_f,R_0)$,
		and hence that the aforementioned claim holds.
		
		b) 
		In view of Theorem \ref{thm:lb-cvx}, b) will follow
		from the claim that
		\eqref{cmplx:bound} with $ \mu=0 $ is a
		$\bar \varepsilon$-upper complexity bound
		for RPB
		with respect to the instance class $ {\cal I}_0(M_f,R_0;C)$ (and hence to any instance class ${\cal I}$ satisfying \eqref{incl:I1}).
		To show the latter claim, first note that RPB is in $ {\cal A}({\cal I}_0(M_f,R_0;C),\bar \varepsilon) $ in view of Lemma \ref{lem:RPB} with $\mu=0$.
		Moreover, since
		the second inclusion of \eqref{incl:I1} and the definition of $ {\cal I}_0(M_f,R_0;C) $ in \eqref{def:tI} imply that
		$ M_h\le C M_f$ and $ d_0\le R_0 $, it follows from
		Corollary \ref{thm:bound-cvx} that
		$ {\cal O}_1(M_f^2 R_0^2/\bar \varepsilon^2) $ is a
		$ \bar \varepsilon $-upper complexity bound for RPB with respect to $ {\cal I}_0(M_f,R_0;C)$.
		Clearly,
		the previous bound is equal to \eqref{cmplx:bound} with $\mu=0$.
	\QEDA

\end{document}